\documentclass[a4paper,11pt,reqno]{amsart}
\usepackage{enumerate}
\usepackage{enumitem}
\usepackage{amsfonts,amssymb,amsmath,amsthm}
\usepackage{epsfig}
\usepackage{graphics}
\usepackage[normalem]{ulem}
\usepackage{xcolor}
\usepackage{comment}
\usepackage{stmaryrd} 
\usepackage{url}
\usepackage{overpic}
\usepackage{mathrsfs}
\usepackage{tikz-cd}
\usepackage[british]{babel}

\usepackage[a4paper,top=3cm,bottom=3cm,left=3cm,right=3cm,marginparwidth=2cm]{geometry}

\usepackage[colorlinks=true, allcolors=blue]{hyperref}
\usepackage{mathtools}

\usepackage[draft]{fixme}
\FXRegisterAuthor{jr}{ajr}{Reppekus}
\FXRegisterAuthor{hp}{ahp}{Peters}
\FXRegisterAuthor{gr}{agr}{Regts}

\renewcommand{\Pr}{\mathbb{P}}

\theoremstyle{plain}
\newtheorem*{main}{Main Theorem}
\newtheorem{thm}{Theorem}
\newtheorem{lem}[thm]{Lemma}
\newtheorem{prop}[thm]{Proposition}
\newtheorem{cor}[thm]{Corollary}

\theoremstyle{definition}
\newtheorem{defn}[thm]{Definition}

\theoremstyle{remark}
\newtheorem{rem}[thm]{Remark}
\newtheorem{note}[thm]{Note}

\DeclarePairedDelimiter{\paren}{\lparen}{\rparen}

\DeclarePairedDelimiter{\abs}{\lvert}{\rvert}
\DeclarePairedDelimiter{\norm}{\lVert}{\rVert}

\title{Decay of correlations and zeros for the hard-core model}
\author{Han Peters}
\address{Korteweg de Vries Institute for Mathematics, University of Amsterdam
P.O. Box 94248 1090 GE, Amsterdam, The Netherlands}
\email{h.peters@uva.nl}
\author{Guus Regts}
\email{g.regts@uva.nl}
\author{Josias Reppekus}
\email{j.reppekus@uva.nl}

\begin{document}
\begin{abstract}
    In a recent paper the last author proved that absence of complex zeros of the partition function of the hard-core model near a parameter $\lambda>0$ implies a form of correlation decay called strong spacial mixing. In this paper we investigate the reverse implication.
    
    We introduce a strengthening of strong spatial mixing that we call \emph{very strong spatial mixing} (VSSM).
    Our main result is that if VSSM holds at a parameter $\lambda>0$ for a family of graphs, this implies that the partition function has no zeros near that parameter for each graph in the family. We also demonstrate that a closely related variant of very strong spatial mixing does not imply zero-freeness.
    As a consequence of our main result, we moreover obtain that VSSM implies spectral independence. 
    
    Our proof relies on transforming the problem to the analysis of an induced non-autonomous dynamical system given by M\"obius transformations.
\end{abstract}
\maketitle

\section{Introduction} 
The \emph{hard-core model} at \emph{fugacity} $\lambda>0$ on a finite graph $G=(V,E)$ is the probability measure on the collection of independent sets $\mathcal{I}_G$ of $G$ (i.e. $\mathcal{I}_G:=\{I\subseteq V \mid \text{ for all } u,v\in I: \{u,v\}\notin E\}$) defined by 
\[
\mu_{G,\lambda}(I):=\frac{\lambda^{|I|}}{\sum_{J\in\mathcal{I}_G}\lambda^{|J|}},
\]
where we often just write $\mu$ instead of $\mu_{G,\lambda}$ in case $\lambda$ and $G$ are clear from the context.
Here 
\[
Z_G(\lambda):=\sum_{J\in\mathcal{I}_G}\lambda^{|J|},
\]
is the \emph{partition function} of the hard-core model, also known as the independence polynomial of $G$.

The hard-core model and its partition function have been studied and used from a variety of angles. In  statistical physics, the hard-core model or hard-core lattice gas model, is a discrete model for the hard-sphere model, in which it is assumed that each atom of a gas takes up a fixed space in a given volume, a \emph{core}, which cannot intersect the core of any other atom. This is for example used to model adsorption of molecules on to a crystal surface, see e.g.~\cite{absorption}. 

In extremal combinatorics the hard-core model has been used to study regular graphs with the most independent sets~\cite{Perkinsoccupancyfraction, Perkinsavgsizeindset} and is related to the Lov\'asz Local Lemma~\cite{ScottSokal05}.
The question of the existence of efficient algorithms to (approximately) compute $Z_G(\lambda)$ has received much attention in theoretical computer science~\cite{Weitz2006CountingIndependentSetsuptotheTreeThreshold,SlySun2014CountinginTwoSpinModelsondRegularGraphs,GalanisStefankovicVigoda2016InapproximabilityofthePartitionFunctionfortheAntiferromagneticIsingandHardCoreModels,SinclairSrivastavaStefankovicYin2017SpatialMixingandtheConnectiveConstantOptimalBounds,Barvinok2016CombinatoricsandComplexityofPartitionFunctions,PatelRegts17,BezakovaGalanisGoldbergStefankovic2020InapproximabilityoftheIndependentSetPolynomialintheComplexPlane,deBoeretalchaoticratios}.

Two of the main approaches in the design of efficient deterministic\footnote{There are also efficient randomized algorithms based on Markov chains, see Subsection~\ref{ssec:spectral ind} of the present paper.} algorithms for approximating the partition function $Z_G(\lambda)$ are based on seemingly distinct properties of the hard-core model and connecting these forms the main motivation for the present paper.

One is the correlation decay approach pioneered by Weitz~\cite{Weitz2006CountingIndependentSetsuptotheTreeThreshold} about twenty years ago, and allows to design efficient algorithms provided the model satisfies a strong form of correlation decay, which roughly says that the influence on a given vertex of vertices far away in the graphs decays exponentially fast. We will make this precise below.
The other approach is the interpolation method of Barvinok~\cite{Barvinok2016CombinatoricsandComplexityofPartitionFunctions} combined with an algorithm due to Patel and the second author~\cite{PatelRegts17}.
This approach yields efficient algorithms provided the complex zeros of the polynomial $Z_G(x)$ do not come arbitrarily close to the interval $[0,\lambda]$.
Notably both approaches are inspired by statistical physics and operate in the regime where the model does not exhibit phase transitions, in the sense of uniqueness of the Gibbs measure for the first approach and in the Lee-Yang~\cite{LeeYang1} sense for the second approach.
Additionally, they yield the same algorithmic results for the family  $\mathcal{G}_\Delta$ of all graphs of a given maximum degree $\Delta\in \mathbb{N}$. More precisely, assuming $\Delta\geq 3$, for $0\leq \lambda<\lambda_c(\Delta)$, where
\[
\lambda_c(\Delta):=\frac{(\Delta-1)^{\Delta-1}}{(\Delta-2)^\Delta},
\]
both approaches yield a fully polynomial time approximation algorithm to compute a relative $\varepsilon$-approximation to $Z_G(\lambda)$ for $G\in \mathcal{G}_\Delta$ while for for $\lambda>\lambda_c(\Delta)$ the problem of even computing a relative $2$-approximation to $Z_G(\lambda)$ is \textsc{NP}-hard~\cite{SlySun2014CountinginTwoSpinModelsondRegularGraphs,GalanisStefankovicVigoda2016InapproximabilityofthePartitionFunctionfortheAntiferromagneticIsingandHardCoreModels}.

It is therefore natural question to ask how the underlying properties of correlation decay and absence of complex zeros relate. 
For certain structured families of graphs such as lattices this question has been answered in a very strong sense by Dobrushin and Schlossman~\cite{DS84,DS87} around forty years ago.
They provided more than a dozen properties of the model and showed that they are equivalent at a given $\lambda>0$. Moreover, their work is not restricted to the hard-core model, but applies to many local interaction models.
However, their proof crucially relied on the structural aspect of lattices that in these graphs the collection of vertices at distance $r$ of a given vertex only grows polynomially in $r$.
Since for a typical graph in $\mathcal{G}_\Delta$ this growth is exponential, the question of the connection between absence of zeros and correlation decay for the hard-core model is not answered by the work of Dobrushin and Schlossman~\cite{DS84,DS87}.

The broader question of the connection between correlation decay and absence of zeros on more general families of graphs (of bounded degree) has gained interest in the past few years.
For example, Liu, Sinclair, and Srivastava~\cite{LSSCorrelationdecayandcomplexzeros} recognized that a successful approach for proving decay of correlation for models such as the hard-core model at a parameter could also be used to show absence of zeros of the corresponding partition function near the parameter.
Recently, the second author~\cite{Regts2023AbsenceofZerosImpliesStrongSpatialMixing}, improving on work of Gamarnik~\cite{Gamarnikcorrelationandzeros}, showed that absence of complex zeros near an interval of the form $[0,\lambda]$ implies a strong form of correlation decay, called \emph{strong spatial mixing}, for parameters in that interval (for any given family of bounded degree graphs).

In the present paper, we contribute to this line of research by introducing a strengthening of the notion of strong spatial mixing that we coin \emph{very strong spatial mixing} (VSSM) and show that this property implies zero-freeness of the partition function near the parameter $\lambda$.
Informally, a family of graphs that is closed under taking induced subgraphs satisfies VSSM at $\lambda>0$ if the family of self-avoiding walk trees of the graphs in the family satisfies weak spatial mixing at $\lambda$. 
See Definition~\ref{def:VSSM} below for the precise definition.
We note that for $\lambda>0$ strong spatial mixing on the infinite $\Delta$-regular tree is equivalent to VSSM on for the class of graphs of maximum degree at most $\Delta$, linking our definition of VSSM to the more standard notion of strong spatial mixing.
Our main result is the following:

\begin{main}
    Let $\mathcal{G}$ be a family of bounded degree graphs closed under taking induced subgraphs and let $\lambda^\star>0$. If the hard-core model on $\mathcal{G}$ satisfies VSSM at $\lambda^\star$, then there exists an open set $U\subseteq \mathbb{C}$ containing $\lambda^\star$ such that $Z_G(\lambda)\neq 0$ for all $\lambda \in U$ and any graph $G\in \mathcal{G}$.
\end{main}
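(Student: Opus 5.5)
The plan is to reduce zero-freeness to control of ratios along the self-avoiding walk tree, and to run that recursion as a non-autonomous composition of Möbius maps.

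\emph{Reduction.} Since $\mathcal{G}$ is closed under induced subgraphs and $Z_{G}(\lambda)=Z_{G-v}(\lambda)\,(1+R_{G,v}(\lambda))$, where
\[
R_{G,v}(\lambda)=\lambda\frac{Z_{G-N[v]}(\lambda)}{Z_{G-v}(\lambda)},
\]
induction on $|V(G)|$ shows it suffices to find an open $U\ni\lambda^\star$ such that, for all $\lambda\in U$, $G\in\mathcal{G}$ and $v\in V(G)$, the ratio $R_{G,v}(\lambda)$ is well defined and stays in a fixed compact set $K\subset\mathbb{C}\setminus\{-1\}$. We then use Weitz's construction: $R_{G,v}$ equals the root ratio of the self-avoiding walk tree $T=T_{\mathrm{SAW}}(G,v)$, with boundary vertices fixed to occupied or unoccupied. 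On $T$ the ratios obey
\[
R_u=\lambda\prod_{w\text{ child of }u}\frac{1}{1+R_w}.
\]
Passing to variables in which each step is a Möbius map, for instance $x\mapsto \lambda/(1+x)$ along a path, turns the recursion into a composition of Möbius transformations, with products over siblings handled by a telescoping or path decomposition. Concretely, $R_v$ is obtained by changing boundary leaves one at a time. Each change produces a difference that is the derivative of a composition of Möbius maps along the path from that leaf to the root, multiplied by a bounded boundary perturbation.

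\emph{Use of VSSM at $\lambda^\star$.} VSSM says that weak spatial mixing holds on all these SAW trees at $\lambda^\star$. So at depth $n$, the root ratio is insensitive to the boundary condition up to an error $\varepsilon_n\to0$, uniformly. The key quantitative step is to upgrade this to a uniform contraction statement. There should exist $N$ and a suitable metric, for example a potential-function or hyperbolic metric on a neighbourhood of the positive real interval $[0,\lambda^\star]$ of possible real ratios, such that every composition of $N$ consecutive recursion steps, arising from any tree of the family (including arbitrary pinnings, since the family is closed under induced subgraphs), contracts that metric by a factor $<1$. Here we would exploit that for real positive $\lambda$ each step is an orientation-reversing monotone map. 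A non-contracting composition of length $N$ would then produce two boundary conditions whose effects at the root remain $\ge c$ apart, contradicting weak spatial mixing on a suitably built SAW tree. I expect this step, extracting uniform contraction of the non-autonomous Möbius system from mere decay of the boundary influence, to be the main obstacle. The difficulty is that weak spatial mixing only compares extreme boundary conditions, while contraction must hold for all intermediate compositions and for products over many children. We would first try to use monotonicity: at real $\lambda^\star$ all-occupied and all-unoccupied boundaries sandwich every other boundary. Then, by Schwarz–Pick-type reasoning, convergence of the extreme orbits forces the composed Möbius maps to shrink an interval, and hence a complex neighbourhood.

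\emph{Perturbation.} Once $N$-step compositions at $\lambda^\star$ map a compact neighbourhood $D$ of the real ratio interval strictly into itself with uniform contraction, we use compactness. There are finitely many degrees, the maps depend continuously on $\lambda$ and on the inputs, and the strict inclusion is uniform. So for $\lambda$ in a small disc $U$ around $\lambda^\star$, the perturbed compositions still map $D$ into itself. Because the depth is arbitrary, we handle the first fewer than $N$ levels near the leaves by a uniform bound. The recursion is then well defined: no denominator $1+R_w$ vanishes, since $D$ stays away from $-1$. Hence $R_{G,v}(\lambda)\in K$ for all $G\in\mathcal{G}$, and the reduction gives $Z_G(\lambda)\neq0$ on $U$.
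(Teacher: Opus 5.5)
Your outer framework (ratios and induction over induced subgraphs, passage to self-avoiding walk trees, the monotone sandwich, and a final compactness argument over the finitely many depth-$N$ trees) is the paper's. Your perturbation step is fine once the right contraction estimate is available. But that estimate is the whole content of the theorem. You correctly flag it as the main obstacle, yet leave it open, and the route you sketch would not give it.

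The depth-$N$ map $F_{T,N}$ has up to $(\Delta-1)^N$ inputs. So the induction needs $|F_{T,N}(r')-F_{T,N}(r)|\le\frac12\|r'-r\|_\infty$ near real inputs, i.e.\ a uniform bound $\|\nabla F_{T,N}\|_1\le\frac12$ on the \emph{sum} of all partial derivatives. A contraction of a one-variable metric is not enough. Schwarz--Pick applied to a one-variable M\"obius slice only bounds $|\partial_u F_{T,N}(r)|$ by the influence of leaf $u$ at the \emph{same} base point, namely $|F_{T,N}(0_u,r_{-u})-F_{T,N}(\lambda_u,r_{-u})|$. These quantities do not telescope. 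Via monotonicity, VSSM controls only $F_{T,N}(\vec\lambda)-F_{T,N}(\vec 0)=\sum_u\epsilon_u$, a sum of same-sign increments in which leaf $u$ is flipped at the specific base point $\delta^u$ (earlier leaves at $0$, later ones at $\lambda$).

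For the same reason your contradiction step is unjustified. An $\ell_1$-gradient of size $\frac12$ can be spread over exponentially many leaves, each of tiny influence. Converting it into two boundary conditions whose root ratios differ by a fixed $c$ requires precisely the missing comparison, with a constant independent of $N$.

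What is missing is a base-point comparison $|\partial_u F_{T,N}(r)|\le A\,|\epsilon_u|$ for all real $r$, with $A$ independent of $N$, $T$ and $u$. This requires VSSM not just at the root but along the entire path from $u$ to $v$. It is applied to the subtrees rooted on or hanging off that path, which are again self-avoiding walk trees of induced subgraphs. Then the data at distance $j$ from $u$ change by only $O(\alpha^j)$ when the base point changes, and the resulting distortion factors have a convergent product.

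The paper does this as follows:
\begin{enumerate}
\item It writes the slice as $f_{\lambda_N}\circ\cdots\circ f_{\lambda_1}$ and shows $|\lambda_j-\tilde\lambda_j|\le C\alpha^j$ for the base points $\delta^u$ and $r$.
\item It conjugates by $\phi_{w_j}$, where $(w_j)$ is the unique orbit staying in the left half-plane, to obtain affine maps whose derivatives differ by a factor of at most $\prod_j(1+O(\alpha^j))$.
\item It concludes with the image-diameter bound $\|F_N'\|_K\le C|F_N(0)-F_N(\lambda)|=C|\epsilon_u|$ for the $\delta^u$-slice.
\end{enumerate}

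At real base points the comparison can also be read off from the chain rule $\partial_u F_{T,N}=\pm\prod_{j=1}^N z_j/(1+z_{j-1})$. Here $z_0=r_u$ and $z_j$ is the ratio at the $j$-th vertex of the path from $u$. Changing the base point with $r_u$ fixed moves $z_j$ by $O(\alpha^j)$, by VSSM on the subtree rooted at that vertex. Combine this with bounded distortion of the M\"obius slice on $[0,\lambda]$.

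Summing then gives $\|\nabla F_{T,N}(r)\|_1\le C\sum_u|\epsilon_u|\le CC'\alpha^N$. Finiteness of the set of depth-$N$ trees extends this bound to a complex neighbourhood. Without some such path-wise use of VSSM with summable errors, monotone sandwiching plus Schwarz--Pick does not give a contraction uniform over exponentially many inputs.
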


We note that for the family of graphs of maximum degree at most $\Delta$ our main result combined with the result from~\cite{Regts2023AbsenceofZerosImpliesStrongSpatialMixing} provides a clear explanation as to why the algorithmic approaches of Weitz and Barvinok essentially yield the same results. (In fact, in~\cite{bencs2025barvinok} it was observed that also the underlying algorithms are nearly the same.)

In the following Section~\ref{subsec:DefsAndResults}, we formally introduce the notion of very strong spatial mixing, state a multi-variate version of the main result, and discuss results demonstrating that the main result no longer holds under certain relaxations of the very strong spatial mixing assumption.
In Section~\ref{ssec:spectral ind}, we derive from our main result that very strong spatial mixing implies spectral independence, a property relevant in the analysis of the mixing time of the Glauber dynamics for the hard-core model.
Finally, in Section~\ref{ssec:proof overview} we give a rough outline of our proof of the main result and an overview of the remainder of the paper.

\subsection{Definitions and main result}\label{subsec:DefsAndResults}
We start by introducing some standard terminology.

For a graph $G=(V,E)$ and a vertex $v_{0}\in V$ we have
\[
Z_{G}^{v_{0}\textnormal{ in}}(\lambda) \coloneqq \sum_{\substack{I \in \mathcal{I}_G\\
        v_{0}\in I}} \lambda^{|I|} = \lambda Z_{G\setminus N(v_{0})}(\lambda)
\]
and 
\[
Z_{G}^{v_{0}\textnormal{ out}}(\lambda) \coloneqq \sum_{\substack{I \in \mathcal{I}_G\\
        v_{0}\notin I}} \lambda^{|I|} = Z_{G-v_{0}}(\lambda),
\]
where $N(v_{0})$ denotes the set of $v_{0}$ and all its neighbours,
$G\setminus V'$ denotes the induced subgraph of $G$ with vertex
set $V\setminus V'$, and $G-v$ the induced subgraph with vertex
set $V\setminus\{v\}$. 
This allows to write
\begin{equation}\label{eq:fundamental recurrence}
    Z_G(\lambda)= \lambda Z_{G\setminus N(v_{0})}(\lambda)+ Z_{G-v_{0}}(\lambda).
\end{equation}
We define the rational function
\[
R_{G,v_{0}}(\lambda)\coloneqq\frac{Z_{G}^{v_{0}\textnormal{ in}}(\lambda)}{Z_{G}^{v_{0}\textnormal{ out}}(\lambda)}
\]
and refer to this as the \emph{occupation ratio at $v_0$} (we will often write ratio for short).
We note that in case $\lambda>0$ we can interpret the ratio as 
\begin{equation}\label{eq:prob interpretation ratio}
    R_{G,v_0}(\lambda)=\frac{\Pr_{\mu}[v_0\in \bf I]}{\Pr_{\mu}[v_0\notin\bf I]}.
\end{equation}

Let $G=(V,E)$ be a graph. A \emph{boundary condition} on $G$ for
the hard-core model is a map 
\[
\sigma:V_{\sigma}\to\{0,1\}
\]
on a subset $V_{\sigma}\subset V$ such that $V_{\sigma, \mathrm{in}}:=\sigma^{-1}(1)\subseteq V$ is an independent set of $G$. 
We mostly work with boundary conditions at a fixed distance of a given vertex $v$. To this end let $\mathbb N$ denote the set of positive integers $\{1,2,\ldots\}$. For a graph $G=(V,E)$, a vertex $v\in V$, and $\ell\in \mathbb{N}$, let $S_G(v,\ell)$ denote the collection of all vertices of $G$ at distance $\ell$ from $v$ in $G$.

For a boundary condition $\sigma$ on $G$, we define the conditional partition function $Z_{(G,\sigma)}(\lambda)$ as
\[
Z_{(G,\sigma)}(\lambda):=\sum_{\substack{I\subseteq V\setminus V_\sigma\\ I\cup V_{\sigma, \mathrm{in}} \in \mathcal{I}_G}} \lambda^{|I|}.
\]
For $v\notin V_\sigma$ we subsequently define the conditional ratio 
\[
R_{(G,\sigma),v}(\lambda)\coloneqq\frac{Z^{v \textnormal{ in}}_{(G,\sigma)}(\lambda)}{Z^{v \textnormal{ out}}_{(G,\sigma)}(\lambda)},
\]
where 
\[
Z_{(G,\sigma)}^{v\textnormal{ in}}(\lambda)=\sum_{\substack{v\in I\subseteq V\setminus V_{\sigma}\\
        I\cup V_{\sigma,\mathrm{in}}\in \mathcal{I}_G
    }
}\lambda^{|I|}\quad\text{and}\quad Z_{(G,\sigma)}^{v\textnormal{ out}}(\lambda)=\sum_{\substack{v\notin I\subseteq V\setminus V_{\sigma}\\
        I\cup V_{\sigma,\mathrm{in}}\in \mathcal{I}_G
    }
}\lambda^{|I|},
\]
and note that as in~\eqref{eq:prob interpretation ratio} for $\lambda>0$ we have 
\begin{equation}\label{eq:prob interpretation ratio with boundary}
    R_{(G,\sigma),v}(\lambda)=\frac{\Pr_{\mu}[v_0\in {\bf I}\mid  {\bf I}\sim\sigma]}{\Pr_{\mu}[v_0\notin{\bf I}\mid  {\bf I}\sim\sigma]},
\end{equation}
where the notation $ {\bf I}\sim\sigma$ stands for the event that the random independent set $\bf I$ agrees with $V_{\sigma,\mathrm{in}}$ on $V_\sigma$.


To define very strong spatial mixing we will need the concept of the tree of self avoiding walks, as introduced by Scott and Sokal~\cite{ScottSokal05} and Weitz~\cite{Weitz2006CountingIndependentSetsuptotheTreeThreshold}. 
See also Bencs~\cite{Bencsreal-rooted}. Informally, the vertices of this tree correspond to paths starting at a given vertex $v$ of the graph and each path is connected with an edge to its single extensions. The following equivalent and more formal definition will be useful in later proofs.

\begin{defn}[The tree of self avoiding walks] \label{def:TSAW}
    
    
    Given a graph $G=(V,E)$ and a vertex $v$, the \emph{tree of self avoiding walks} is the tree $T_{\mathrm{SAW}(G,v)}$ obtained from $(G,v)$ as follows.
    Fix a total ordering of the vertices of $G$.
    We define the tree $T_{\mathrm{SAW}(G,v)}$ recursively. 
    If the component of $G$ containing $v$ consists of a single vertex $v$, then $T_{\mathrm{SAW}(G,v)}=\{v\}$.
    In case the component of $G$ containing $v$ consists of more than $1$ vertex, let $v_1,\ldots,v_d$ denote the neighbours of $v$ in $G$.
    Let for $i=1\ldots d$, $G_i$ be the graph obtained from $G$ by removing $v,v_1,\ldots,v_{i-1}$ from $G$.
    Denote for $i=1,\ldots, d$, $T_i=T_{\mathrm{SAW}(G_{i},v_{i})}$.
    Then $T_{\mathrm{SAW}(G,v)}$ is defined as the tree obtained from the disjoint union of the $T_i$ adding a vertex $v$ connecting it to each of the $v_1,\ldots,v_d$. 
\end{defn}

\begin{rem}
    The tree of self avoiding walks as defined above is closer to Bencs's definition~\cite{Bencsreal-rooted} of the stable path tree and is somewhat more compact than Weitz's original definition~\cite{Weitz2006CountingIndependentSetsuptotheTreeThreshold}, which involves boundary conditions. 
    The use of boundary conditions is necessary in case one wants to work with other models than the hard-core model such as the Ising model.
    
    Bencs~\cite{Bencsreal-rooted} moreover gives other variants of the stable path tree. Each of these would suffices for our purposes as we will become clear later. 
    For clarity we just stick to the definition above.
\end{rem}

The next lemma indicates the usefulness of the tree of self avoiding walks.
\begin{lem}[Weitz~\cite{Weitz2006CountingIndependentSetsuptotheTreeThreshold}]\label{lem:weitz}
    Let $G$ be a graph with vertex $v$. Then
    \[
    R_{G,v}(\lambda)=R_{T_{\mathrm{SAW}(G,v)},v}(\lambda).
    \]
\end{lem}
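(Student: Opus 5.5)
We will prove the identity by strong induction on the number of vertices in the connected component of $G$ containing $v$, following the recursive construction in Definition~\ref{def:TSAW}. Both sides are rational functions of $\lambda$, so it suffices to prove the identity as rational functions (equivalently, for all $\lambda$ outside a finite set, e.g.\ for $\lambda>0$ where all denominators are positive). In the base case the component of $v$ is $\{v\}$. Then $Z_G=(1+\lambda)Z_{G-v}$ and $R_{G,v}=\lambda$, which is also the ratio at the root of the single-vertex tree. Vertices outside the component of $v$ factor out of both $Z^{v\textnormal{ in}}_G$ and $Z^{v\textnormal{ out}}_G$ and so never affect the ratio.

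The key algebraic step is a telescoping identity on $G$. Let $v_1,\dots,v_d$ be the neighbours of $v$ in the fixed order, and set $H_1=G-v$ and $H_{i+1}=H_i-v_i$, so that $H_i=G_i$ in the notation of Definition~\ref{def:TSAW}. Using $Z^{v\textnormal{ in}}_G=\lambda Z_{G\setminus N(v)}=\lambda Z_{H_{d+1}}$ and $Z^{v\textnormal{ out}}_G=Z_{H_1}$, we get
\[
R_{G,v}(\lambda)=\lambda\frac{Z_{H_{d+1}}}{Z_{H_1}}=\lambda\prod_{i=1}^d\frac{Z_{H_{i+1}}}{Z_{H_i}}=\lambda\prod_{i=1}^d\frac{Z^{v_i\textnormal{ out}}_{H_i}}{Z^{v_i\textnormal{ in}}_{H_i}+Z^{v_i\textnormal{ out}}_{H_i}}=\lambda\prod_{i=1}^d\frac{1}{1+R_{H_i,v_i}(\lambda)}.
\]
Here we used $Z_{H_i}=Z^{v_i\textnormal{ in}}_{H_i}+Z^{v_i\textnormal{ out}}_{H_i}$ from \eqref{eq:fundamental recurrence}, together with $Z^{v_i\textnormal{ out}}_{H_i}=Z_{H_{i+1}}$.

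On the tree side, the same computation for a root whose subtrees $T_1,\dots,T_d$ are disjoint gives the standard tree recursion
\[
R_{T,v}=\lambda\prod_{i=1}^d\frac{1}{1+R_{T_i,v_i}}.
\]
This holds because removing the root splits $T$ into the disjoint union of the $T_i$, so the partition functions factor: $Z^{v\textnormal{ out}}_T=\prod_i Z_{T_i}$ and $Z^{v\textnormal{ in}}_T=\lambda\prod_i Z^{v_i\textnormal{ out}}_{T_i}$.

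It remains to compare the two recursions term by term. For each $i$, the component of $v_i$ in $G_i$ is strictly smaller than the component of $v$ in $G$, since it omits $v$. The induction hypothesis therefore gives $R_{G_i,v_i}=R_{T_{\mathrm{SAW}(G_i,v_i)},v_i}=R_{T_i,v_i}$, and the two products agree, which completes the induction.

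The main obstacle is bookkeeping rather than anything deep. One must check that the graphs $H_i$ in the telescoping product are exactly the graphs $G_i$ used in Definition~\ref{def:TSAW}, so that the orderings match. One must also handle potential poles: for complex $\lambda$ some $Z_{H_i}$ may vanish, and this is why we argue as identities of rational functions. Finally, the induction measure must decrease, which it does because $v$ is removed at each step.
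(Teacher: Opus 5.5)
Your proof is correct and is essentially the paper's argument: the paper proves the multivariate strengthening (Lemma~\ref{lem:ratio TSAW is ratio G}) by the same induction. It uses the identical telescoping product $\lambda Z_{G\setminus N(v)}/Z_{G-v}=\lambda\prod_{i=1}^d Z_{G_i-v_i}/Z_{G_i}$, factorizes the tree partition function over the subtrees $T_i$, and compares the two recursions term by term. The differences are only cosmetic: you induct on the size of the component of $v$ rather than on $|V(G)|$, and you state the rational-function caveat explicitly.
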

We will need a slightly stronger version of this result that we prove in Lemma~\ref{lem:ratio TSAW is ratio G}.
We can now define the notion of decay of correlations that we need.

\begin{defn}\label{def:VSSM}
    The hard-core model on a family $\mathcal{G}$ of bounded degree graphs satisfies
    \emph{very strong spatial mixing (VSSM)} at a parameter $\lambda>0$ if there
    exist constants $\alpha\in[0,1)$ and $C>0$ such that for all $G=(V,E)\in\mathcal{G}$, any induced subgraph $H$ of $G$ and $v\in V(H)$, and for all boundary conditions $\sigma,\tau:S_{T_{\mathrm{SAW}(H,v)}}(v,\ell)\to\{0,1\}$ we have
    \begin{equation}
        \abs{R_{(T_{\mathrm{SAW}(H,v)},\sigma,v}(\lambda)-R_{(T_{\mathrm{SAW}(H,v)},\tau,v}(\lambda)}\le C\cdot \alpha^{\ell}\quad \text{for all }\ell\in \mathbb{N}.
        \label{eq:VSSM-via-R}
    \end{equation}
    We refer to $\alpha$ as the \emph{rate of VSSM}.
\end{defn}
Some remarks are in order.
\begin{rem}
    We note that in~\cite{nair2007correlation} the term very strong spatial mixing was used to describe a different notion. We trust, however, that there will be no confusion.
\end{rem}

\begin{rem} 
    Typically notions of correlation decay are formulated in terms of the sensitivity of $\Pr_\mu[v\in {\bf I}\mid  {\bf I}\sim\sigma]$ on the boundary condition $\sigma$.
    For a graph of maximum degree at most $\Delta$ we have
    \begin{equation}\label{eq:lower bound prob v in}
        \Pr_{\mu}[v_0\in {\bf I}]\geq\frac{\lambda}{\lambda+(1+\lambda)^{\Delta+1}},
    \end{equation} 
    as follows by conditioning on the vertices of distance $2$ from $v$.
    This implies that if the maximum degree $\Delta$ of the family $\mathcal{G}$ is fixed in advance, replacing the conditional ratios in~\eqref{eq:VSSM-via-R} by the conditional probabilities does 
    not affect the definition of VSSM.
\end{rem}

\begin{rem}\label{rem:vssm=ssm for trees}
    Let us recall that the hard-core model on a family $\mathcal{G}$ of bounded degree graphs satisfies
    \emph{strong spatial mixing (SSM)} at a parameter $\lambda>0$ if there
    exist constants $\alpha\in[0,1)$ and $C>0$ such that for all $G=(V,E)\in\mathcal{G}$, any induced subgraph $H$ of $G$ and $v\in V(H)$, and for all boundary conditions $\sigma,\tau:S_H(v,\ell)\to\{0,1\}$ we have
    \begin{equation}
        \abs{R_{(H,\sigma),v}(\lambda)-R_{(H,\tau),v}(\lambda)}\le C\cdot \alpha^{\ell}\quad \text{for all }\ell\in \mathbb{N}.
        \label{eq:SSM-via-R}
    \end{equation}
    With the aid of Lemma~\ref{lem:weitz} and Corollary~\ref{cor:vssm =ssm for trees} below it can be shown that VSSM implies SSM and hence VSSM is indeed a strengthening of SSM.
    
    We note that in case $\mathcal{G}$ consists only of trees, the two notions are clearly equivalent since the tree of self avoiding walks of a rooted tree is just the tree itself. 
\end{rem}

Given a graph $G=(V,E)$ and a vector of variables $(\lambda_v)_{v\in V}$ we define the multivariate independence polynomial by
\[
Z_{G}(\lambda_v):=\sum_{I\in \mathcal{I}_G} \prod_{v\in I} \lambda_v.
\]
The following multivariate statement implies the main result, and is the result that we will actually prove.

\begin{thm}\label{thm:main}
    Let $\Delta\in \mathbb{N}$ and let $\lambda^\star>0$.
    Let $\mathcal{G}$ be a family of graphs of maximum degree at most $\Delta$. If the hard-core model on $\mathcal{G}$ satisfies very strong spatial mixing at $\lambda^\star$, then there exists $\varepsilon>0$ such that for each $G=(V,E)\in \mathcal{G}$ and $\Vec\lambda\in B(\lambda^\star,\varepsilon)^V$ we have $Z_G(\Vec \lambda)\neq 0$.
\end{thm}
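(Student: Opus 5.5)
Throughout we work with the multivariate tree recursion. Write $G-v$ for the graph with $v$ removed. Since $Z_G(\vec\lambda)=Z_{G-v}(\vec\lambda)\,(1+R_{G,v}(\vec\lambda))$, an induction on $|V|$ reduces Theorem~\ref{thm:main} to a single claim: for all $\vec\lambda\in B(\lambda^\star,\varepsilon)^V$, every induced subgraph $H$ of $G$ and every $v\in V(H)$ satisfy $R_{H,v}(\vec\lambda)\neq -1$, and $R_{H,v}$ is finite. Here we use that $\mathcal{G}$ is closed under induced subgraphs, or more precisely that VSSM is formulated for all induced subgraphs $H$. By the multivariate version of Lemma~\ref{lem:weitz} (the strengthening announced as Lemma~\ref{lem:ratio TSAW is ratio G}), $R_{H,v}=R_{T,v}$ with $T=T_{\mathrm{SAW}(H,v)}$. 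Each tree vertex inherits the activity of its graph vertex. On $T$ the ratio is computed by
\[
R_{T,u}=\lambda_u\prod_{w\text{ child of }u}\frac{1}{1+R_{T_w,w}}.
\]
This is a composition, along the levels of $T$, of maps built from the Möbius map $z\mapsto 1/(1+z)$ followed by multiplication. The plan is therefore to find a uniformly bounded region $D\subset\mathbb{C}$, away from $-1$, that is invariant under the recursion for all $\lambda_u\in B(\lambda^\star,\varepsilon)$ and all degrees at most $\Delta$.

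The key step is to extract from VSSM a contraction statement for the real recursion at $\lambda^\star$. Fix $\ell$ and consider the vertices at depth $\ell$ of $T$. Because VSSM holds for every induced subgraph, the subtrees below depth $\ell$ are again self-avoiding walk trees of induced subgraphs. By the ``strong'' nature of VSSM, pinning arbitrary subsets of them is itself an SAW tree of an induced subgraph. Hence the map $F_\ell$ sending the vector of real ratios at depth $\ell$, ranging over $[0,\infty]$ via boundary conditions and in particular over $[0,\lambda^\star]$, to the root ratio has total variation at most $C\alpha^\ell$. I would differentiate this: by monotonicity in each coordinate, the $\ell^1$-sum of the partial derivatives of $F_\ell$ on the box $[0,\lambda^\star]^{S_\ell}$ is at most $C'\alpha^\ell$. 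This uses that a sequential change of coordinates from all-$0$ to all-$\lambda^\star$ with alternating signs by parity is controlled by VSSM; it can be done level by level since the signs of the partials alternate with depth. Choosing $\ell_0$ with $C'\alpha^{\ell_0}<1/4$ gives a $1/4$-contraction of the $\ell_0$-step composite in a suitable weighted $\ell^\infty$ sense on real inputs.

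Next I perturb to complex values. Let $K=[0,\lambda^\star]$ and let $D=\{z: \operatorname{dist}(z,K)<\delta\}$. The $\ell_0$-fold composite $F_{\ell_0}$ is holomorphic in the depth-$\ell_0$ inputs and in the at most $\Delta^{\ell_0}$ activities involved. It is a rational map with denominators bounded away from $0$ on a neighbourhood of the real configuration, because all intermediate ratios stay near $[0,\lambda^\star]$. By continuity of derivatives, uniformly over the finitely many tree shapes of depth $\ell_0$ and degree at most $\Delta$, for $\delta,\varepsilon$ small the composite maps $D^{S_{\ell_0}}$ into $\{z:\operatorname{dist}(z,K)<\tfrac12\delta+O(\varepsilon)\}\subset D$. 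Then induction on the height of $T$, done in blocks of $\ell_0$ levels with the leftover top levels handled by continuity, shows $R_{T,v}\in D'$, a small neighbourhood of $[0,\lambda^\star]$. In particular $R_{T,v}\ne-1$ and $R_{T,v}$ is finite. Shallow subtrees of depth less than $\ell_0$ are handled by the same compactness argument.

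The main obstacle is the second paragraph: passing from the sup-norm VSSM bound on real boundary values to a derivative (contraction) bound that survives complexification. VSSM controls only extremal boundary conditions, not intermediate real values or a full neighbourhood. I would first try to show via monotonicity and alternation that intermediate inputs in $[0,\infty]$ are sandwiched between the all-$0$/all-$\infty$ boundary outputs. That yields a Lipschitz bound in a hyperbolic-type metric on $(0,\infty)$. I would then argue via Möbius/hyperbolic geometry that a strict contraction in that metric persists on a thin complex neighbourhood, possibly after changing coordinates (e.g. $z\mapsto\log(1+z)$) so that the errors introduced by the complex activities are absorbed.
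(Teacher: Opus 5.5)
The gap is in your second paragraph, where you ``differentiate'' the VSSM bound. From coordinatewise monotonicity and $|F_{T,n}(\vec\lambda)-F_{T,n}(\vec 0)|\le C\alpha^n$ you conclude that $\|\nabla F_{T,n}(r)\|_1\le C'\alpha^n$ at \emph{every} point $r$ of the box. Monotonicity only gives the telescoping identity $|F_{T,n}(\vec\lambda)-F_{T,n}(\vec 0)|=\sum_u|\epsilon_u(T,n)|$, because all $\epsilon_u$ have the same sign. This controls the finite difference in coordinate $u$ only at the specific staircase configuration $\delta^u$. It says nothing about $\partial_uF_{T,n}$ at an arbitrary point. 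The increments $|F_{T,n}(\dots,\lambda,\dots)-F_{T,n}(\dots,0,\dots)|$ with the other coordinates frozen at a general $r$ do not telescope.

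Your fallback does not close this. Sandwiching gives the oscillation bound $|F_{T,n}(r)-F_{T,n}(r')|\le C\alpha^n$. Via Schwarz--Pick for the one-variable M\"obius map $F_{T,n;u}$, this bounds each $|\partial_uF_{T,n}(r)|$ by $O(\alpha^n)$ separately. But there are up to $\Delta(\Delta-1)^{n-1}$ coordinates, so the sum is only $O(((\Delta-1)\alpha)^n)$, which need not be small.

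You have also misplaced the difficulty. Once a real $\ell^1$-gradient bound $\le 1/4$ at a fixed depth is available, passing to complex inputs and complex activities is routine. It follows by continuity and finiteness of trees of that depth, exactly as in your third paragraph and in the paper.

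What is missing is uniform comparability of single-site influences. The derivative of $F_{T,n;u}$ must be within a factor $A$ of its value at the staircase configuration, for every choice of the other (real) inputs, with $A$ independent of $n$ and $T$. This gives $\|F'_{T,n;u}\|_K\le C|\epsilon_u(T,n)|$, and summing over $u$ then yields the gradient bound. Proving it requires VSSM a \emph{second} time, applied to the subtrees along the path from $u$ to the root. These are SAW trees of induced subgraphs by the second half of the multivariate Weitz lemma.

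The paper proceeds in four steps:
\begin{enumerate}
\item It writes $F_{T,n;u}=f_{\lambda_n}\circ\cdots\circ f_{\lambda_1}$ and applies VSSM to the side branches to get $|\lambda_j-\tilde\lambda_j|\le C\alpha^j$ between the two configurations.
\item It transfers this closeness to the unique orbit $(w_j)\subset(-1-\lambda,-1)$ that stays in the left half-plane. There the inverse maps contract, whereas forward orbits need not stay close.
\item It conjugates to the affine maps $g_j$, whose derivatives $-(w_{j-1}+1)/w_j$ then have a uniformly bounded product ratio between the two configurations.
\item It uses that a real M\"obius map sends the disc with diameter $[0,\lambda]$ onto the disc with diameter $[F(0),F(\lambda)]$. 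This turns the comparison into a sup bound on a complex neighbourhood $K$ of $[\ell_\Delta(\lambda),r_\Delta(\lambda)]$.
\end{enumerate}
One can also get the comparability from the path formula $F'_{T,n;u}(z_0)=(-1)^n\prod_{j=1}^n z_j/(1+z_{j-1})$ for real $z_0\in[0,\lambda]$. Here $z_j$ is the ratio at the $j$-th path vertex, computed in the subtree below it. It is bounded below and, by VSSM and sandwiching, moves by $O(\alpha^j)$ between configurations. Either way, some such argument is the heart of the proof, and it is absent from your proposal.
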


Given the results in~\cite{Regts2023AbsenceofZerosImpliesStrongSpatialMixing} it is natural to ask whether a converse to Theorem~\ref{thm:main} holds.
We note that this generally is not the case as, by a result of Chudnovsksy and Seymour~\cite{CSclawfree}, the independence polynomial of a claw-free graph has only real (and hence negative) zeros. Combining this result with an example of Bencs~\cite[Proposition 3.5]{Bencsreal-rooted} that gives a family of claw-free graphs whose self-avoiding walk trees contain arbitrarily large binary trees as (rooted) subtrees and therefore these will not satisfy weak spatial mixing at $\lambda=\lambda_c(3)=4$. Additionally, by~\cite{Regts2023AbsenceofZerosImpliesStrongSpatialMixing} this also shows that SSM and VSSM are not equivalent in general.
We return to the question of whether a certain converse to Theorem~\ref{thm:main} holds in Section~\ref{sec:conclusion}. 

We now focus on a natural relaxation of the notion of VSSM.
Recall that Weitz's algorithmic approach works well on a family of bounded degree graphs provided the hard-core model satisfies VSSM.
In fact, for algorithmic purposes one can do with the following slightly weaker notion, which was utilized in~\cite{SinclairSrivastavaStefankovicYin2017SpatialMixingandtheConnectiveConstantOptimalBounds}.

\begin{defn}\label{def:VSSM at a distance}
    Let $\varphi \colon \mathbb{N}\to\mathbb{R}^{+}$ be a function with positive
    values. 
    The hard-core model on a family $\mathcal{G}$ of bounded degree graphs satisfies
    \emph{very strong spatial mixing from distance $\varphi$ ($\varphi$-VSSM)} at a parameter $\lambda>0$ if there
    exist constants $\alpha\in[0,1),C>0$ such that for all $G=(V,E)\in\mathcal{G}$ and any induced subgraph $H$ of $G$ and $v\in V(H)$, and for all boundary conditions $\sigma,\tau:S_{T_{\mathrm{SAW}(H,v)}}(v,\ell)\to\{0,1\}$ we have
    \begin{equation}
        \abs{R_{(T_{\mathrm{SAW}(H,v)},\sigma),v}(\lambda)-R_{(T_{\mathrm{SAW}(H,v)},\tau),v}(\lambda)}\le C\cdot \alpha^{\ell}\quad \text{for all }\ell\geq \varphi(|V|).
        \label{eq:VSSM-via-R at distance}
    \end{equation}
\end{defn}

For families of graphs of bounded maximum degree it is not hard to see that $\varphi$-VSSM with $\varphi(n)=\log(n)$ is sufficient for algorithmic purposes using Weitz' approach. See~\cite{SinclairSrivastavaStefankovicYin2017SpatialMixingandtheConnectiveConstantOptimalBounds} for extensions to graphs of unbounded degree under additional conditions on the so-called connective constant.

Our next result shows that VSSM at distance $\varphi$ does no longer necessarily imply zero-freeness provided $\varphi$ is unbounded. This suggests that Barvinok's approach may not be applicable in all settings where Weitz's approach can be successfully applied (such as~\cite{SinclairSrivastavaStefankovicYin2017SpatialMixingandtheConnectiveConstantOptimalBounds}).

\begin{thm}\label{thm:VSSM at a distance and zeros}
    For each increasing unbounded function $\varphi \colon \mathbb N \to [0,\infty)$, each 
    $\Delta\in \mathbb{N}_{\geq 3}$ and $\lambda^\star>\lambda_c(\Delta)$, there exists a family of graphs $\mathcal{G}$ of maximum degree at most $\Delta$ such that $\mathcal{G}$ satisfies VSSM at distance $\varphi$ for every $\lambda\in [0,\lambda^\star]$ and such that there exists a sequence of graphs $(G_n)_n$ in $\mathcal{G}$ and a sequence $(\lambda_n)_n$ in $\mathbb{C}$ such that $Z_{G_n}(\lambda_n)=0$ and $\lambda_n\to \lambda_c(\Delta)$.
\end{thm}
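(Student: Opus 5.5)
The plan is an explicit construction. Each graph consists of one small "bad" tree, whose independence polynomial has a zero close to $\lambda_c(\Delta)$, together with a large number of isolated vertices. The isolated vertices make $|V|$ so large that the threshold $\varphi(|V|)$ exceeds every distance inside the tree. Then condition~\eqref{eq:VSSM-via-R at distance} becomes vacuous: it only constrains boundary conditions on spheres that are empty, where the two conditional ratios trivially coincide.

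First, the bad trees. Let $T_n$ be the rooted tree of depth $n$ in which the root has $\Delta-1$ children and every non-leaf vertex also has $\Delta-1$ children, so all degrees are at most $\Delta$. I would invoke the known result of Peters and Regts (on Sokal's conjecture) that the zeros of $Z_{T_n}$ accumulate at $\lambda_c(\Delta)$. This gives $\lambda_n\in\mathbb{C}$ and a subsequence, relabelled as $T_n$, with $Z_{T_n}(\lambda_n)=0$ and $\lambda_n\to\lambda_c(\Delta)$. That result follows from iterating the Möbius-type map $z\mapsto \lambda/(1+z)^{\Delta-1}$, whose fixed point becomes neutral at $\lambda_c(\Delta)$. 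If I could not cite it directly, reproving it would be the main obstacle; the rest of the argument is soft.

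Second, the padding and the family. Since $\varphi$ is increasing and unbounded, for each $n$ I choose $m_n$ such that
\[
\varphi\bigl(|V(T_n)|+m_n\bigr)>2n+1 .
\]
I set $G_n=T_n\sqcup \overline{K_{m_n}}$, the disjoint union of $T_n$ with $m_n$ isolated vertices, and let $\mathcal{G}=\{G_n\}_n$. The partition function is multiplicative over disjoint unions, so $Z_{G_n}(\lambda)=Z_{T_n}(\lambda)(1+\lambda)^{m_n}$. Hence $Z_{G_n}(\lambda_n)=0$, and the maximum degree is at most $\Delta$.

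Third, verifying $\varphi$-VSSM. Let $H$ be an induced subgraph of $G_n$ and $v\in V(H)$. The component of $H$ containing $v$ is either an isolated vertex or an induced subtree of $T_n$, which has diameter at most $2n$. The tree of self-avoiding walks of a tree is the tree itself, and $T_{\mathrm{SAW}(H,v)}$ depends only on the component of $v$. Therefore every vertex of $T_{\mathrm{SAW}(H,v)}$ is at distance at most $2n$ from $v$.

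Now take any $\ell\ge\varphi(|V(G_n)|)>2n$. Then $S_{T_{\mathrm{SAW}(H,v)}}(v,\ell)=\emptyset$, so $\sigma=\tau$ is the empty boundary condition and the left-hand side of~\eqref{eq:VSSM-via-R at distance} equals $0$. This holds for every $\lambda>0$, in particular for every $\lambda\in[0,\lambda^\star]$ (and $\lambda=0$ is trivial), with any choice of $C>0$ and $\alpha\in[0,1)$. This completes the construction.
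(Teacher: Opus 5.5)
Your argument is correct for the statement and for the definition of $\varphi$-VSSM as written. It takes a genuinely different and much cheaper route than the paper.

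You exploit the fact that the threshold $\varphi(|V|)$ is computed from the ambient graph $G$, while $T_{\mathrm{SAW}(H,v)}$ only sees the component of $v$. Padding with isolated vertices empties every relevant sphere, so the mixing condition holds vacuously. Multiplicativity, $Z_{G_n}=Z_{T_n}(1+\lambda)^{m_n}$, then keeps the Cayley-tree zeros of Lemma~\ref{lem:zeroFreeRegion}. There is one minor slip: after passing to a subsequence, $T_n$ has depth $k_n\ge n$ rather than $n$. You should therefore choose $m_n$ with $\varphi(|V(T_n)|+m_n)>2k_n$.

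The paper instead uses the connected trees $T_{d^k,1^{m_k}}$ together with their subtrees $T_{d^{k-1},1^{m_k}}$ and $T_{d^{k-2},1^{m_k}}$. The attached paths inflate $|V|$ just as your isolated vertices do. However, for slowly growing $\varphi$ the spheres of radius $\ell\ge\varphi(|V|)$ are in general nonempty along the paths, so genuine exponential decay has to be proved there. This is Proposition~\ref{prop:SSMestimateFixedk}, via contraction of $f_{1,\lambda}$ near its attracting fixed point. Because the paths change the polynomial, the zeros also need the adapted Montel argument of Proposition~\ref{prop:ZerosForAdjFamily} and Lemma~\ref{lem:attractingPlusNonNormalIsNormal}.

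What your route buys is brevity: it needs no dynamics beyond the classical Cayley-tree result, and it even gives $\varphi$-VSSM at every $\lambda>0$. That last feature shows what your argument really proves. It exhibits a degeneracy of the definition: any bounded-degree family can be padded this way without changing its zeros. It does not establish the intended point that genuine decay from distance $\varphi(n)$ fails to force zero-freeness.

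The paper's example survives natural repairs of the definition, such as requiring the graphs to be connected, or measuring the threshold by the size of the component of $v$ in $G$. Yours does not. Under such a repair you would need decay on $T_n$ itself beyond distance $\varphi(|V(T_n)|)$. That fails for slowly growing $\varphi$ at every $\lambda\in(\lambda_c(\Delta),\lambda^\star]$, because of the attracting $2$-cycle of $f_{d,\lambda}$.
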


\subsection{VSSM implies spectral independence}\label{ssec:spectral ind}
Spectral independence is an important property in the analysis of the mixing time of the Glauber dynamics for the hard-core model on bounded degree graphs. 
Anari, Liu and Oveis Gharan~\cite{spectralindependence-hardcore} showed in a breakthrough result that spectral independence implies rapid mixing of the Glauber dynamics and that the hard-core model on the family of all graphs of maximum degree at most $\Delta$ is spectrally independent at all fixed $\lambda<\lambda_c(\Delta)$, thereby matching applicability of Markov chain based randomized algorithms with that of the deterministic algorithms based on the approaches of Weitz~\cite{Weitz2006CountingIndependentSetsuptotheTreeThreshold} and Barvinok~\cite{Barvinok2016CombinatoricsandComplexityofPartitionFunctions}.
We refer to~\cite{spectralindsurvey} for a survey covering recent developments and improvements in this rapidly evolving direction of research. 
Let us point out one recent interesting development. In~\cite{LiuetalMarkovchainsphasetrasitions} it was shown that a certain proof approach for showing rapid mixing of the Glauber dynamics of the hard-core model at a parameter can be used to show zero-freeness in a neighbourhood of that parameter, thereby further linking zero-freeness to other notions in the field.

For completeness we will formally define the notion of spectral independence following~\cite{spectralindsurvey}.
Let $\lambda>0$. 
For a graph $G=(V,E)$ and distinct vertices $u,v\in V$ we define the \emph{influence} of $u$ on $v$ as 
\[
\Psi_{G,\lambda}(u\rightarrow v)=\mathbb{P}[v\in {\mathbf I}| u\in {\mathbf I}]-\mathbb{P}[v\in {\mathbf I}| u\notin{\mathbf I}].
\]
Define the $V\times V$ matrix $I_{G,\lambda}$ by $I_{G,\lambda}(u,v)=\Psi_{G,\lambda}(u\rightarrow v)$ if $u\neq v$ and $I_{G,\lambda}(u,v)=0$ otherwise.  

The hard-core model at fugacity $\lambda$ is called \emph{spectrally independent} on a family of graphs $\mathcal{G}$ if there exists a constant $\eta>0$ such that for each $G\in \mathcal{G}$ and each induced subgraph $H$ of $G$ the largest eigenvalue\footnote{It can be shown that all eigenvalues of $I_{G,\lambda}$ are nonnegative, see~\cite[Section 2.1]{spectralindsurvey}.} of $I_{H,\lambda}$ is bounded by $\eta$. (Note that our definition slightly differs from~\cite{spectralindsurvey} since we work with induced subgraphs rather than pinnings/boundary conditions.)

Anari, Liu and Oveis Gharan~\cite{spectralindependence-hardcore} showed that, for the family of all graphs of maximum degree $\Delta$, the hard-core model satisfies spectral independence at all fugacities $\lambda\in[0,\lambda_c(\Delta))$ building on Weitz' proof~\cite{Weitz2006CountingIndependentSetsuptotheTreeThreshold} of strong spatial mixing for hard-core model on the infinite tree. See also~\cite{Kuikuirapidmixing} for other usages of proof techniques for strong spatial mixing leading to proofs of spectral independence for other models.
It is therefore natural to wonder whether notions such as strong spatial mixing itself implies spectral independence. 
This question was explicitly asked by \v{S}tefankovi\v{c} in a lecture series at TU Dortmund in 2022\footnote{See \texttt{https://eac.cs.tu-dortmund.de/storages/eac-cs/r/summerschool-2022/Stefankovic-II.pdf}}.
Theorem~\ref{thm:main} in combination with~\cite[Theorem 11]{Spectralindependenceviastability} (which essentially says that multivariate zero-freeness implies spectral independence) directly implies the following corollary, thereby giving a positive answer to this question.
\begin{cor}\label{cor:vssmimpliesSI}
    Let $\Delta \in \mathbb{N}$ and let $\lambda^\star>0$.
    Let $\mathcal{G}$ be a family of graphs of maximum degree at most $\Delta$. If the hard-core model on $\mathcal{G}$ satisfies very strong spatial mixing at $\lambda^\star$, then the hard-core distribution at $\lambda^\star$ on $\mathcal{G}$ is spectrally independent.   
\end{cor}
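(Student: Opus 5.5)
The corollary follows by combining Theorem~\ref{thm:main} with \cite[Theorem 11]{Spectralindependenceviastability}. So the plan is to check that the hypotheses of the latter are met and then read off the conclusion.

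First, apply Theorem~\ref{thm:main} to $\mathcal{G}$ at $\lambda^\star$. This gives $\varepsilon>0$ such that $Z_G(\vec\lambda)\neq 0$ for every $G\in\mathcal{G}$ and every $\vec\lambda\in B(\lambda^\star,\varepsilon)^{V(G)}$. The definition of spectral independence quantifies over induced subgraphs $H$ of $G\in\mathcal{G}$, so we need zero-freeness for these too. Setting $\lambda_v=0$ for $v\notin V(H)$ would leave the polydisc, so we argue differently. The VSSM hypothesis in Definition~\ref{def:VSSM} already ranges over all induced subgraphs $H$ of graphs in $\mathcal{G}$. Hence the family $\mathcal{G}'$ of all induced subgraphs of members of $\mathcal{G}$ also satisfies VSSM at $\lambda^\star$, with the same constants $C,\alpha$, and has maximum degree at most $\Delta$. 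Applying Theorem~\ref{thm:main} to $\mathcal{G}'$ gives a uniform $\varepsilon>0$ with $Z_H(\vec\lambda)\neq0$ on $B(\lambda^\star,\varepsilon)^{V(H)}$ for all $H\in\mathcal{G}'$.

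Second, invoke \cite[Theorem 11]{Spectralindependenceviastability}. It states that if the multivariate independence polynomial of a graph of maximum degree at most $\Delta$ is zero-free on a polydisc of radius $\varepsilon$ around $\lambda^\star$, then the influence matrix at $\lambda^\star$ has largest eigenvalue at most some $\eta=\eta(\Delta,\lambda^\star,\varepsilon)$. The heuristic is that influences are derivatives of $\log Z$ in the variables $\lambda_v$, and Cauchy estimates on the zero-free polydisc bound them uniformly. Applying this to each $H\in\mathcal{G}'$ with the uniform $\varepsilon$ bounds the largest eigenvalue of $I_{H,\lambda^\star}$ by a single $\eta$, which is spectral independence.

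The only delicate point is matching conventions. The cited result may phrase spectral independence through pinnings rather than induced subgraphs, and may require zero-freeness in a slightly different region, such as a neighbourhood of the segment or a polydisc in $\log$-coordinates. Pinnings reduce to induced subgraphs for the hard-core model: pinning $v$ out deletes $v$, and pinning $v$ in deletes $N(v)$. A polydisc around $\lambda^\star$ contains a small neighbourhood of $\lambda^\star$ in any of these coordinate systems, so shrinking $\varepsilon$ handles any such mismatch.
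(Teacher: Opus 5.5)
Your proposal is correct and follows the paper's argument, which obtains the corollary directly by combining Theorem~\ref{thm:main} with \cite[Theorem 11]{Spectralindependenceviastability}. Your extra step of passing to the closure $\mathcal{G}'$ under induced subgraphs is legitimate, since Definition~\ref{def:VSSM} already quantifies over all induced subgraphs, and together with your remarks on pinnings it makes explicit what the paper leaves implicit.
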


\subsection{Proof approach}\label{ssec:proof overview}
As is well known, the ratio allows to transfer information about the partition function being zero to the ratio taking the value $-1$.

More concretely, let $G$ be a graph with vertex $v_0$.
Then if $Z_{G-v_{0}}(\lambda)\neq 0$, then
\begin{equation}\label{eq:ratio -1 vs Z=0}
    Z_{G}(\lambda)=0  \text{ if and only if }R_{G,v_{0}}(\lambda)=-1.
\end{equation}
(Note that the implication $R_{G,v_{0}}(\lambda)=-1$ implies $Z_{G}(\lambda)=0$ also holds without the assumption that $Z_{G-v_{0}}(\lambda)\neq 0$.)

To prove Theorem~\ref{thm:main} we make use of this simple observation in combination with the recursive nature of (rooted) trees by writing the ratios as a composition 
of maps encoding the contribution of fixed depth subtrees.
We aim to show that these tree ratio maps are contracting for small perturbations of $\lambda$ and use this to inductively show that the ratios stay close to the positive real axis and hence can never reach $-1$. This takes inspiration from~\cite{LSSCorrelationdecayandcomplexzeros}.

To show that the tree ratio maps are contracting, we write them as compositions of maps of the form $z\mapsto f_\lambda(z):=\frac{\lambda}{1+z}$ for varying values of $\lambda$. 
This takes inspiration from~\cite{PetersRegts2019OnaConjectureofSokalConcerningRootsoftheIndependencePolynomial,deBoeretalchaoticratios}. 
Here we think of these varying values of $\lambda$ as functions of a boundary condition. 
We use VSSM to see that the associated sequences of $\lambda$ for the two extremal boundary conditions (all leaves are either `in' or `out') tend to get close exponentially fast.
We view the sequence of compositions of the maps $f_\lambda$ as a non-autonomous dynamical system and use a carefully chosen non-autonomous change of coordinates to transform the M\"obius transformations $f_\lambda$ to affine maps.
We then use the assumption of VSSM once more to show that the tree ratio maps map the right half plane into a small disk.
This is then combined with the change of coordinates to show that the tree ratio maps are indeed contracting.

To prove Theorem~\ref{thm:VSSM at a distance and zeros} we construct a sequence of trees for which we show by hand that they satisfy $\varphi$-VSSM. To prove the statement about the zeros, we use the dynamical properties of the rational function $z\mapsto \frac{\lambda}{(1+z)^{\Delta-1}}$ as proved in~\cite{PetersRegts2019OnaConjectureofSokalConcerningRootsoftheIndependencePolynomial} in combination with Montel's theorem along similar lines as was done in~\cite{Buys2021CayleyTreesDoNotDeterminetheMaximalZeroFreeLocusoftheIndependencePolynomial,deBoeretalchaoticratios}.

In the next section, we collect some basic properties of the recursive computation of ratios of trees and show how to write the tree ratio map as a composition of the maps $f_\lambda$.
In Section~\ref{sec:dynamics} we study these compositions in more detail, describe the change of coordinates, and prove several consequences of this perspective under VSSM-like assumptions.
In Section~\ref{sec:proof of main} we collect all the ingredients to prove Theorem~\ref{thm:main} along the lines explained above.
Section~\ref{sec:vssm at distance} contains a proof of Theorem~\ref{thm:VSSM at a distance and zeros}.
In Section~\ref{sec:conclusion} we conclude with some questions that arise naturally and some final remarks.

\subsection*{Acknowledgements}This publication is part of the project \emph{Phase transitions, computational complexity and chaotic dynamical systems} with file number OCENW.M.22.155 of the research programme \emph{Open Competitie ENW M22-2} which is (partly) financed by the Dutch Research Council (NWO) under the grant \url{https://www.nwo.nl/en/projects/ocenwm22155}. Guus Regts is partially funded by the Netherlands Organisation of Scientific Research (NWO): VI.Vidi.193.068.

\section{Preliminaries on occupation ratios on trees}\label{sec:preliminaries}
We collect here some preliminaries that will be used throughout. 
Most of the results that we state here are well known, but we provide proofs for completeness and occasionally to be able to expand on these proofs.

We will start with a slight extension of the result of Weitz~\cite{Weitz2006CountingIndependentSetsuptotheTreeThreshold} we recalled in Lemma~\ref{lem:weitz}.
\begin{lem}\label{lem:ratio TSAW is ratio G}
    Let $G=(V,E)$ be a graph, let $v$ be a vertex of $G$ and let $(\lambda_u)_{u\in V}$ be a vector of variables. 
    Then there exists a vector of variables $(\tilde \lambda_w)_{w\in V(T_{\mathrm{SAW}(G,v)})}$ with $\tilde \lambda_w\in \{\lambda_u\mid u\in V\}$ for each vertex $w$ of $T_{\mathrm{SAW}(G,v)}$ such that
    \begin{equation}
        R_{T_{\mathrm{SAW}(G,v)},v}(\tilde\lambda_w)=R_{G,v}(\lambda_u).\label{eq:ratio TSAW is ratio G}  
    \end{equation}
    Moreover, for any vertex $u$ of $T_{\mathrm{SAW}(G,v)}$, the induced subtree $(T,u)$ of $T_{\mathrm{SAW}(G,v)}$, with those vertices $w$ such that $u$ is on the unique path from $w$ to $v$, is the self avoiding walk tree of an induced subgraph of $G$.
\end{lem}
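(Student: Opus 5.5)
We argue by strong induction on the number of vertices of $G$, following the recursive Definition~\ref{def:TSAW}. The choice of labels is forced by the construction: every vertex $w$ of $T_{\mathrm{SAW}(G,v)}$ is created as the root of $T_{\mathrm{SAW}(G',u)}$ for some induced subgraph $G'$ of $G$ and vertex $u$ of $G'$, and we set $\tilde\lambda_w:=\lambda_u$. Then $\tilde\lambda_w\in\{\lambda_u\mid u\in V\}$ automatically. The ``moreover'' statement follows from the same induction. The subtree hanging below $w$ is, by construction, exactly $T_{\mathrm{SAW}(G',u)}$. Here $G'$ is obtained from $G$ by repeatedly deleting vertices, so it is an induced subgraph of $G$. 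This uses that an induced subgraph of an induced subgraph is induced.

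\textbf{Base case.} If the component of $v$ is $\{v\}$, then the tree is the single vertex $v$. Both ratios equal $\lambda_v$: the factor coming from the other components cancels in $Z^{v\textnormal{ in}}/Z^{v\textnormal{ out}}$.

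\textbf{Main step.} For the inductive step we first establish the graph identity
\[
R_{G,v}(\lambda)=\frac{\lambda_v}{\prod_{i=1}^d\bigl(1+R_{G_i,v_i}(\lambda)\bigr)},
\]
where $G_i=G-\{v,v_1,\dots,v_{i-1}\}$ as in Definition~\ref{def:TSAW}. We have $Z^{v\textnormal{ in}}_G=\lambda_v Z_{G\setminus N(v)}$ and $Z^{v\textnormal{ out}}_G=Z_{G-v}$. We telescope
\[
\frac{Z_{G\setminus N(v)}}{Z_{G-v}}=\prod_{i=1}^d\frac{Z_{G_{i+1}}}{Z_{G_i}},
\]
with $G_{d+1}=G\setminus N(v)$ and $G_1=G-v$. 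Each factor satisfies
\[
\frac{Z_{G_i}}{Z_{G_i-v_i}}=\frac{Z_{G_i-v_i}+Z^{v_i\textnormal{ in}}_{G_i}}{Z_{G_i-v_i}}=1+R_{G_i,v_i}.
\]
This holds because $G_{i+1}=G_i-v_i$.

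On the tree side, the analogous recursion for a rooted tree with children $v_1,\dots,v_d$ and subtrees $T_i$ is
\[
R_{T,v}=\frac{\tilde\lambda_v}{\prod_i\bigl(1+R_{T_i,v_i}\bigr)}.
\]
This is the same computation, now using that the $T_i$ are disjoint, so the partition function factorizes. The induction hypothesis applied to $(G_i,v_i)$, which has fewer vertices, gives $R_{T_i,v_i}(\tilde\lambda)=R_{G_i,v_i}(\lambda)$ with the labels defined above. Since $\tilde\lambda_v=\lambda_v$, identity~\eqref{eq:ratio TSAW is ratio G} follows.

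\textbf{Main obstacle.} The only delicate point is the interpretation of the identities as rational functions. Some of the denominators $Z_{G_i-v_i}$ may vanish at specific parameter values. We therefore work throughout with formal rational functions in the variables $(\lambda_u)$, where all these partition functions are nonzero polynomials (each has constant term $1$). The identities then hold in the field of rational functions, so no specialization issues arise.
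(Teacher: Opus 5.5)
Your proof is correct and follows essentially the same route as the paper. Both argue by induction on $|V|$ along Definition~\ref{def:TSAW}, use the telescoping identity $R_{G,v}=\lambda_v/\prod_{i}(1+R_{G_i,v_i})$ matched against the tree recursion, and inherit the subtree claim from the recursive construction. Your explicit labelling $\tilde\lambda_w:=\lambda_u$ agrees with the one the paper builds inductively, and your remark about working in the field of rational functions adds a precision the paper leaves implicit.
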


\begin{proof}
    We will prove~\eqref{eq:ratio TSAW is ratio G} as well as the claim about the subtrees of $T_{\mathrm{SAW}(G,v)}$ by induction on the number of vertices of $G$.
    Fix an ordering of the vertices of $V$.
    We note that if the component of $G$ containing $v$ consists of one vertex, then $T_{\mathrm{SAW}(G,v)}$ is just a single vertex, we set $\tilde{\lambda}_v=\lambda_v$ and both statements clearly hold.
    
    Next, assume that $v$ has at least one neighbour and let $v_1,\ldots,v_d$ be its neighbours.
    Recall from Definition~\ref{def:TSAW} that for $i=1\ldots d$, we denote by $G_i$ the induced subgraph obtained by deleting the vertices $v,v_1,\ldots,v_{i-1}$ from $G$. 
    Denote for $i=1,\ldots, d$, $T_i=T_{\mathrm{SAW}(G_{i},v_{i}),v_i}$.
    Let $\tilde{\lambda}^i_u$ be the associated vectors of variables.
    By definition $T_{\mathrm{SAW}(G,v)}$ is the tree obtained from the disjoint union of the $T_i$ by adding a vertex $v$ and connecting it to each of the $v_1,\ldots,v_d$. 
    By induction we have that for each $i$, that there exists $ (\tilde{\lambda}^i_w)_{w\in V(T_i)}$ with  $\tilde{\lambda}^i_w\in \{\lambda_u\mid u\in V-v\}$ for each $i$ such that
    \begin{equation}\label{eq:ratio claim induction}
        R_{G_i,v_i}(\lambda_u)=R_{T_i,v_i}(\tilde \lambda^i_w)
    \end{equation}
    
    Now let let $\tilde\lambda_w$ be defined by $\tilde\lambda_v=\lambda_v$ and by letting $\tilde\lambda_w=\tilde \lambda^i_w$ in case $w$ appears in $T_i$.
    We claim that this implies~\eqref{eq:ratio TSAW is ratio G}.
    Indeed for the left-hand side of~\eqref{eq:ratio claim induction} we have, by a telescoping argument, using~\eqref{eq:fundamental recurrence} for each of the $G_i$
    \begin{align*}
        R_{G,v}=\lambda_v \frac{Z_{G\setminus N[v]}(\lambda_u}{Z_{G-v}(\lambda_u)}=\lambda_v \frac{Z_{G_d-v_d}(\lambda_u)}{Z_{G_1}(\lambda_u)}=\lambda_v \prod_{i=1}^d \frac{Z_{G_i-v_i}(\lambda_u)}{Z_{G_i}(\lambda_u)}=\frac{\lambda_v}{ \prod_{i=1}^d (1+R_{G_i,v_i}(\lambda_u))}
    \end{align*}
    While for the right-hand side of~\eqref{eq:ratio claim induction} we have, using that the independence polynomial is multiplicative over the components of $T_{\mathrm{SAW}(G,v)}-v$, 
    \begin{align*}
        R_{T_{\mathrm{SAW}(G,v)},v}(\tilde \lambda_w)=\frac{Z^{v\textnormal{ in}}_{T_{\mathrm{SAW}(G,v)}}(\tilde \lambda_w)}{Z^{v \textnormal{ out}}_{T_{\mathrm{SAW}(G,v)}}(\tilde \lambda_w)}=\tilde \lambda_v \prod_{i=1}^d \frac{Z_{T_i-v_i}(\tilde \lambda^i_w)}{Z_{T_i}(\tilde \lambda^i_w)}=\frac{\lambda_v}{\prod_{i=1}^d (1+R_{T_i,v_i}(\tilde \lambda_w))},
    \end{align*}
    as desired.
    
    Now for the second part of the statement, in case the vertex $u$ has distance $1$ to $v$ in $T_{\mathrm{SAW}(G,v)}$, then the tree $(T,u)$ is equal to one of the $T_i$ and we are done by construction.
    In case the distance of $u$ to $v$ is larger than $1$, $u$ is contained in one of the $T_i$ and the claim follows by induction as the tree $(T,u)$ is the self avoiding walk tree of an induced subgraph of $G_i$, and hence of an induced subgraph of $G$.
    This finishes the proof.
\end{proof}

Our strategy to prove Theorem~\ref{thm:main} is to show that the ratios avoid $-1$. Lemma~\ref{lem:ratio TSAW is ratio G} above indicates that it suffices to show this for ratios of trees. 
We therefore introduce some terminology related to ratios of trees.

Consider a rooted tree $(T,v)$ and $\lambda>0$.
As seen in the proof of Lemma~\ref{lem:ratio TSAW is ratio G}, the ratio 
\[
R_{T,v}=R_{T,v}(\lambda)=\frac{Z^{v \textnormal{ in}}_{T}(\lambda)}{Z^{v \textnormal{ out}}_{T}(\lambda)} 
\]
can be recursively computed as follows. Let $v_1,\ldots,v_d$ be the neighbours of $v$ and let $T_1,\ldots,T_d$ be the components of $T-v$ containing $v_1,\ldots,v_d$ respectively.
Then 
\begin{equation} \label{eq:recursion ratio tree}
    R_{T,v}=\frac{\lambda}{\prod_{i=1}^d (1+R_{T_i,v_i})}.
\end{equation}

For $n\in \mathbb{N}$ recall that $S_T(v,n)$ denotes the collection of vertices at distance $n$ from $v$ in $T$ and let $T_n$ be the subtree of $T$ induced by the vertices of distance at most $n$ from $v$.
For each $u\in S_T(v,n)$ let $R_u$ denote the ratio at $u$ in the component of $T\setminus S_T(v,n-1)$ containing $u$.
By repeating \eqref{eq:recursion ratio tree}, we can express the ratio $R_{T,v}$ as a rational function 
\begin{equation}\label{eq:define F_n}
    F_{T,n}:\mathbb{C}^{S_T(v,n)}\to \mathbb{C}
\end{equation}
applied to the vector $(R_u)_{u\in S_T(v,n)}$.
Next, denote by $S^*_T(v,n)$ the subset of vertices in $S_T(v,n)$ that are not leaves.
We will consider the restriction of $F_{T,n}$ to $\mathbb{C}^{S^*_T(v,n)}$, noting that for each $u\in S_T(v,n)\setminus S^*_T(v,n)$ we have $R_u=\lambda$, which we consider fixed.

Let $f_{\lambda}$ denote the M\"obius transformation defined by 
\[
z\mapsto \frac{\lambda}{1+z}.
\]
We wish to express the function $F_{T,n}$ as a composition of the  M\"obius transformations $f_\lambda$ with varying values of $\lambda$ when we fix all but one of the inputs of $F_{T,n}.$
To this end fix a vertex $u\in S^*(v,n)$ and fix values $(r_{u'})_{u'\neq u}$, $r_{u'}\in [0,\lambda]$.
Consider the function $F_{T,n;u}$ defined by $z\mapsto F_{T,n}(z,(r_{u'}))$.
For $i=1,\ldots,n$ let $w_i$ be the vertex at distance $i$ from $u$ on the unique path $P$ from $u$ to $v=w_n$.
Let ${T}_{n,i}$ be the component of $T_n\setminus (P-w_i)$ containing $w_i$ (see Figure~\ref{fig:TreeComps}).

\begin{figure}
    \includegraphics{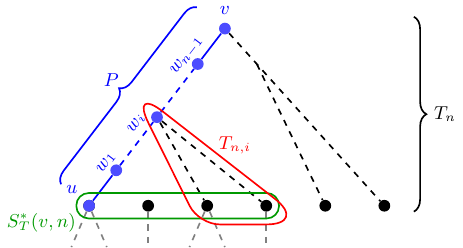}
    \caption{The components of the tree $T$.}
    \label{fig:TreeComps}
    
\end{figure}

For $\Delta\in \mathbb{N}$ and $\lambda>0$ we denote
\begin{equation}
    \ell_\Delta(\lambda):=\frac{\lambda}{(1+\lambda)^{\Delta-1}}. \label{eq:minimum value of lambda}
\end{equation}

\begin{lem}\label{lem:decomposing F into a composition of mobius transformations}
    Let $(T,v)$ be a rooted tree of maximum degree at most $\Delta\in \mathbb{N}$. 
    Let $n\in \mathbb{N}$ and let $u\in S^*_T(v,n)$.
    The function $F_{T,n;u}$ as defined above can be written as $f_{\lambda_n}\circ  \cdots f_{\lambda_1}$,
    where $\lambda_{i}$ is equal to the ratio $R_{T_{n,i},w_{i}}$ of the rooted tree $({T}_{n,i},w_i)$ for $i=1,\ldots, n$ and is contained in $[\ell_\Delta(\lambda),\lambda]$.
\end{lem}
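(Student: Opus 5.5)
The plan is to peel off the recursion~\eqref{eq:recursion ratio tree} one vertex at a time along the path $P$ from $u$ to $v$, and to read off the parameter of each M\"obius map as the ratio of the piece of the tree hanging off the path. Throughout, the level-$n$ inputs $r_{u'}$ with $u'\neq u$ are fixed in $[0,\lambda]$, and leaves at level $n$ carry the value $\lambda$. All ratios of subtrees of $T_n$ are understood with these inputs at the level-$n$ vertices.

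Set $w_0:=u$ and write $R_{w_i}(z)$ for the ratio at $w_i$ in the component of $T_n\setminus(P\setminus\{w_0,\dots,w_i\})$ containing $w_i$, as a function of the input $z$ at $u$. Thus $R_{w_0}(z)=z$ and $R_{w_n}(z)=F_{T,n;u}(z)$. The children of $w_i$ (for $i\ge 1$) in the rooted tree $T_n$ are $w_{i-1}$ together with a set $C_i$ of further children. The subtrees rooted at the vertices of $C_i$ are exactly the branches of $T_{n,i}-w_i$, since $T_{n,i}$ is obtained by cutting the path at $w_i$. Applying~\eqref{eq:recursion ratio tree} at $w_i$ gives
\[
R_{w_i}(z)=\frac{\lambda}{(1+R_{w_{i-1}}(z))\prod_{c\in C_i}(1+R_c)}=f_{\lambda_i}(R_{w_{i-1}}(z)),
\qquad \lambda_i:=\frac{\lambda}{\prod_{c\in C_i}(1+R_c)}.
\]
Applying~\eqref{eq:recursion ratio tree} once more, now to the rooted tree $(T_{n,i},w_i)$, identifies $\lambda_i$ with $R_{T_{n,i},w_i}$; this uses the convention that an empty product equals $1$, which gives $\lambda_i=\lambda$ when $C_i=\emptyset$. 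Inducting on $i$ from $1$ to $n$ then yields $F_{T,n;u}=f_{\lambda_n}\circ\cdots\circ f_{\lambda_1}$.

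For the bounds, I first show by induction on depth, from level $n$ upward, that every ratio $R_c$ of a subtree with level-$n$ inputs in $[0,\lambda]$ lies in $[0,\lambda]$. The base case holds because the inputs, and the value $\lambda$ at leaves, lie in $[0,\lambda]$. For the step, if all children have ratios in $[0,\lambda]$, then the denominator in~\eqref{eq:recursion ratio tree} is a product of factors at least $1$, so the ratio lies in $(0,\lambda]$. This gives $\lambda_i\le\lambda$. For the lower bound, each factor satisfies $1+R_c\le 1+\lambda$, so it suffices to have $|C_i|\le\Delta-1$. For $1\le i<n$, the vertex $w_i$ has its parent $w_{i+1}$ and its child $w_{i-1}$ among its at most $\Delta$ neighbours, hence at most $\Delta-2$ other children. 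For the root $w_n=v$, only $w_{n-1}$ is excluded, leaving at most $\Delta-1$ others. In all cases $\lambda_i\ge\lambda/(1+\lambda)^{\Delta-1}=\ell_\Delta(\lambda)$.

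The only delicate point is the bookkeeping: checking that the branches of $T_{n,i}$ at $w_i$ are exactly the off-path children's subtrees in $T_n$, with the same boundary inputs. That also forces $\lambda_i$ to be independent of $z$, which is what makes the composition form valid. I expect nothing deeper, since the root case giving exactly $\Delta-1$ is what determines the exponent in $\ell_\Delta$.
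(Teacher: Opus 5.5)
Your proof is correct and is essentially the paper's argument. The paper inducts on $n$: it applies the recursion~\eqref{eq:recursion ratio tree} at the root and invokes the hypothesis on the branch containing $u$. You unroll the same recursion bottom-up along $P$ instead, and your count of at most $\Delta-1$ off-path children is the same degree bound the paper uses.

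Your separate depth induction showing that all subtree ratios lie in $[0,\lambda]$ is, if anything, slightly cleaner than the paper's route. The paper appeals to the inductive hypothesis for the sibling branches, which tacitly requires those branches to contain a non-leaf vertex at depth $n-1$.
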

\begin{proof}
    We prove this by induction on $n$. 
    In case $n=1$, the statement follows directly from~\eqref{eq:recursion ratio tree}.
    Indeed, denoting the neighbours of $v$ by $v_1,\ldots,v_{d}$ we may assume $u=v_1$ in which case we have 
    \begin{align*}
        F_{T,1}(z,r_{v_2},\ldots,r_{v_d})&=\frac{\lambda}{(1+z)\prod_{i=2}^{d} (1+r_{v_i})}
        \\
        &=\frac{\frac{\lambda}{\prod_{i=2}^d (1+r_{v_i})}}{1+z}=\frac{\lambda_1}{1+z}=f_{\lambda_1}(z),
    \end{align*}
    with, $\lambda_1:=\frac{\lambda}{\prod_{i=2}^d(1+r_{v_i})}=R_{T_{1,1},w_{1}}$. Clearly $\lambda_1\in [\frac{\lambda}{(1+\lambda)^{d-1}},\lambda]\subseteq [\ell_\Delta(\lambda),\lambda]$ proving the base case.
    
    Now let $n\geq 2$. 
    Let $v_1,\ldots,v_{d}$ be the neighbours of $v$. 
    As above, denote by $T_i$ the component of $T-v$ containing $v_i$.
    Assume that $v_1$ is on the unique path from $v$ to $u$.
    Then by~\eqref{eq:recursion ratio tree} we have 
    \[
    F_{T,n}(z,(r_{u'}))=\frac{1}{1+F_{T_1,n-1}(z,(r^1_{u'}))}\cdot \frac{\lambda}{\prod_{i=2}^{d}(1+F_{T_i,n-1}(r^i_{u'}))} 
    =R_{T_{n,n},w_{n}} ,
    \]
    where we denote by $(r^i_{u'})$ the vector of those $r_{u'}$ for which $u'$ are in the tree $T_i$.
    By induction we have 
    \[
    F_{T_1,n-1;u}(z,(r^1_{u'}))=f_{\lambda_{n-1}}\circ \cdots \circ f_{\lambda_1}(z),
    \]
    with $\lambda_i\in [\ell_\Delta(\lambda),\lambda]$.
    Setting
    \[
    \lambda_n:=\frac{\lambda}{\prod_{i=2}^{d}(1+F_{T_i,n-1}(r^i_{u'}))},
    \]
    we see that $F_{T,n}(z,(r_{u'}))=f_{\lambda_n}\circ f_{\lambda_{n-1}}\circ \cdots \circ f_{\lambda_1}(z)$, as desired.
    
    Since clearly $\lambda_n\leq \lambda$ it remains to prove that $\lambda_n\geq \ell_\Delta(\lambda)$.
    Again by induction we have that for each $i=2,\ldots,d$ fixing some $u^i\in S_{T_i}(v_i,n-1)$ that there exist $\lambda^i_1,\ldots,\lambda^i_{n-1}\in [\ell_\Delta(\lambda),\lambda]$ such that $F_{T_i,n-1}((r^i_{u'}))=f_{\lambda^i_{n-1}}\circ \cdots\circ f_{\lambda^i_1}(r^i_{u^i})$.
    In particular $F_{T_i,n-1}((r^i_{u'}))\leq \lambda$, implying that $\lambda_n\geq \frac{\lambda}{(1+\lambda)^{\Delta-1}}$, completing the proof.
\end{proof}

We next utilize part of this lemma to get an upper bound on the ratios for trees.
For $\lambda>0$ and $d\in \mathbb{N}$ denote 
\[
r_\Delta(\lambda):=\frac{\lambda}{1+\ell_\Delta(\lambda)}=f_\lambda(\ell_\Delta(\lambda)).
\]
\begin{lem}\label{lem:upper bound ratio}
    Let $\lambda>0$ and $\Delta\in \mathbb{N}$. Then for any rooted tree $(T,v)$ of maximum degree at most $\Delta$, consisting of at least two vertices, we have
    \[
    R_{T,v}(\lambda)\leq r_\Delta(\lambda).
    \]
\end{lem}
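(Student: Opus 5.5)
Since $T$ has at least two vertices, the root $v$ has at least one neighbour, so the recursion \eqref{eq:recursion ratio tree} applies and the proof reduces to a lower bound on the denominator. Let $v_1,\dots,v_d$ be the neighbours of $v$, with $1\le d\le\Delta$, and let $T_1,\dots,T_d$ be the corresponding components of $T-v$. Then
\[
R_{T,v}=\frac{\lambda}{\prod_{i=1}^d(1+R_{T_i,v_i})}\le \frac{\lambda}{1+R_{T_1,v_1}},
\]
because every $R_{T_i,v_i}$ is a positive real for $\lambda>0$, so each factor $1+R_{T_i,v_i}$ is at least $1$. As $f_\lambda$ is decreasing on $[0,\infty)$, it suffices to show $R_{T_1,v_1}\ge \ell_\Delta(\lambda)$, since then $R_{T,v}\le f_\lambda(\ell_\Delta(\lambda))=r_\Delta(\lambda)$.

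For the lower bound on $R_{T_1,v_1}$, I use that $v_1$ has degree at most $\Delta-1$ in $T_1$, because its edge to $v$ has been removed. Applying the recursion at $v_1$ gives
\[
R_{T_1,v_1}=\frac{\lambda}{\prod_{j}(1+R_{T_{1,j},u_j})},
\]
where the product runs over at most $\Delta-1$ children $u_j$ of $v_1$. Each $R_{T_{1,j},u_j}$ is itself given by the recursion: it is $\lambda$ divided by a product of factors each at least $1$, so it lies in $(0,\lambda]$. If $T_1$ is a single vertex, then $R_{T_1,v_1}=\lambda\ge\ell_\Delta(\lambda)$. Otherwise the denominator is at most $(1+\lambda)^{\Delta-1}$, hence $R_{T_1,v_1}\ge \lambda/(1+\lambda)^{\Delta-1}=\ell_\Delta(\lambda)$. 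This is the same argument as the final step in the proof of Lemma~\ref{lem:decomposing F into a composition of mobius transformations}, which could be cited directly by taking $n=1$, $u=v_1$ and $r_{u'}=R_{u'}$, provided $v_1$ is not a leaf.

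I do not expect a real obstacle; the only care needed is bookkeeping. The degree of $v_1$ must be taken within $T_1$, which is what gives the exponent $\Delta-1$ rather than $\Delta$. The two cases where $v_1$ is or is not a leaf are handled as above. Finally, the argument relies on the elementary facts that every ratio on a tree lies in $(0,\lambda]$ and that $f_\lambda$ is decreasing on $[0,\infty)$.
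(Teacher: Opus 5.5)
Your proof is correct and is essentially the paper's. Both keep only the factor $1+R_{T_1,v_1}$ in the recursion at $v$ and then use $R_{T_1,v_1}\ge \ell_\Delta(\lambda)$; the paper takes that bound from Lemma~\ref{lem:decomposing F into a composition of mobius transformations}, while you prove it inline by the same ``at most $\Delta-1$ children with ratios in $(0,\lambda]$'' argument, and you also cover the case where $v_1$ is a leaf. One small inaccuracy in an aside: your suggested citation with $n=1$, $u=v_1$ does not literally give this bound, because there $\lambda_1$ is the ratio at $v$ with $v_1$ deleted rather than $R_{T_1,v_1}$, but your direct argument does not depend on it.
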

\begin{proof}
    Let $v_1,\ldots v_{d}$ be the neighbours of $v$ and let $T_i$ be the components of $T-v$ containing the $v_i$.
    Note that $d\geq 1.$
    Then we have 
    \[
    R_{T,v}(\lambda):=\frac{\lambda}{\prod_{i=1}^{d}(1+R_{T_i,v_i}(\lambda))}\leq \frac{\lambda}{1+R_{T_1,v_1}(\lambda)}\leq \frac{\lambda}{1+\ell_\Delta(\lambda)},
    \]
    where the last inequality is due to Lemma~\ref{lem:decomposing F into a composition of mobius transformations} above.
\end{proof}
In the next section we will study maps of the form $ F(z)=f_{\lambda_n}\circ \cdots \circ f_{\lambda_1}(z)$ developing tools to control the behaviour of the map $F_{T,n}$ which will be crucial for our proof of Theorem~\ref{thm:main}.

\section{Non-autonomous affine dynamics}\label{sec:dynamics}
Let $\lambda>0$ and let $(\lambda_n)$ be a sequence of numbers in $(0,\lambda]$.
Motivated by Lemma~\ref{lem:decomposing F into a composition of mobius transformations} we study the non-autonomous dynamical system 
\begin{equation}\label{eq:F to mobius}
    z\mapsto F(z)=f_{\lambda_n}\circ \cdots \circ f_{\lambda_1}(z).
\end{equation}
in this section. We use throughout elementary properties of M\"obius transformations, holomorphic functions, and the Poincar\'e distance that can be found in any book on geometric complex analysis, such as \cite{Zakeri2021ACourseinComplexAnalysis}.

For a starting value $w_0$ we write $(w_n)$ for the orbit of $w_0$ under the associated sequence of maps $f_{\lambda_i}$. 
In other words, $w_i=f_{\lambda_n}(w_{i-1})$ for $i\geq 1.$

\subsection{A non-autonomous conjugation to affine maps}




For a symbol $*\in \{<,>,\leq, \geq\}$ and $c\in \mathbb{R}$ we define the half plane
\[
\mathbb{H}_{*c} := \{z \in \widehat{\mathbb C} \mid  \mathrm{Re}(z) * c \},
\]
where by convention the point $\infty$ is included if and only if the inequality is not strict.

\begin{lem}\label{lem:construction of w_n}
    Let $\lambda>0$.
    Given a sequence $(\lambda_n)$ in $(0,\lambda]$ there exists a unique starting value $w_0$ for which the orbit $(w_n)$ is contained in the left half-plane $\mathbb{H}_{<0}$.
    Moreover, the orbit $(w_n)$ is contained in $(-\lambda-1,-1)$.
\end{lem}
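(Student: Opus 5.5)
Our plan is to pull back the left half-plane along the maps and take the nested intersection. The relevant preimages are $f_{\mu}^{-1}(w)=\frac{\mu}{w}-1$, and for $\mu>0$ the map $w\mapsto \mu/w$ sends $\mathbb{H}_{<0}$ onto $\mathbb{H}_{<0}$. Hence
\[
f_\mu^{-1}(\mathbb{H}_{<0})=\mathbb{H}_{<-1}.
\]
Orbit points satisfy $w_{i}=f_{\lambda_{i}}(w_{i-1})$, so $w_{i-1}=f_{\lambda_i}^{-1}(w_i)$ (the pole at $w_{i-1}=-1$ would give $w_i=\infty\notin\mathbb{H}_{<0}$). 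Thus the condition that the whole orbit lies in $\mathbb{H}_{<0}$ is
\[
w_0\in \bigcap_{n\ge 0} K_n, \qquad K_n:=g_1\circ\cdots\circ g_n(\mathbb{H}_{<0}), \qquad g_i:=f_{\lambda_i}^{-1}.
\]
Here $K_n$ is the set of starting values whose first $n$ iterates stay in $\mathbb{H}_{<0}$. Because each $g_i$ maps $\mathbb{H}_{<0}$ into $\mathbb{H}_{<-1}\subset\mathbb{H}_{<0}$, the sets $K_n$ are nested and decreasing.

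Next I would locate the sets more precisely. Each $g_i$ maps $\mathbb{H}_{<0}$ onto $\mathbb{H}_{<-1}$. Then $g_i(\mathbb{H}_{<-1})$ is $-1$ plus the image of $\mathbb{H}_{<-1}$ under $w\mapsto\lambda_i/w$. That image is the open disk with diameter $(-\lambda_i,0)$, since the line $\mathrm{Re}\,w=-1$ maps to the circle through $0$ and $-\lambda_i$. Consequently $g_i(\mathbb{H}_{<-1})$ is the disk with diameter $(-1-\lambda_i,-1)$, which lies inside the disk $D$ with diameter $(-1-\lambda,-1)$ because $\lambda_i\le\lambda$. The disk $D$ is itself contained in $\mathbb{H}_{<-1}$. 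Therefore $K_n\subset g_1(\mathbb{H}_{<-1})\subset D$ for $n\ge 2$, and more generally each $g_i$ maps $D$ into $D$. Everything now lives inside the fixed disk $D$, which is relatively compact in $\mathbb{H}_{<0}$.

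For existence and uniqueness I would use hyperbolic geometry. Let $\rho$ be the Poincaré metric of $\mathbb{H}_{<0}$. Each $g_i$ maps $\mathbb{H}_{<0}$ holomorphically into $D\Subset\mathbb{H}_{<0}$. By the Schwarz–Pick lemma applied to the inclusion $D\hookrightarrow \mathbb{H}_{<0}$, $g_i$ contracts $\rho$ by a factor $c<1$, and $c$ depends only on $D$, hence only on $\lambda$, uniformly in $i$. The sets $K_n=g_1\circ\cdots\circ g_{n-1}(g_n(\mathbb{H}_{<0}))$ are images of $g_n(\mathbb{H}_{<0})\cap\overline D$-type sets of bounded $\rho$-diameter, so that $\mathrm{diam}_\rho K_{n+1}\le C c^{n}\to0$. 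Their closures are nested, nonempty and compact in $\mathbb{H}_{<0}$, so they intersect in exactly one point $w_0$. This point lies in every $K_n$, since $K_n\supset\overline{K_{n+1}}$ by the strict inclusion $g(\overline{D})\subset D$. Uniqueness follows because any admissible starting value lies in every $K_n$, and these sets shrink to a point.

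Finally, $w_n$ itself is the unique admissible start for the shifted sequence $(\lambda_{n+1},\lambda_{n+2},\dots)$, so $w_n\in D$ for every $n$. Realness follows from symmetry: the $\lambda_i$ are real, so complex conjugation commutes with all the $g_i$, and $\overline{w_0}$ is also admissible. By uniqueness $w_0$ is real, and then $w_0\in D\cap\mathbb R=(-1-\lambda,-1)$; the same holds for every $w_n$. The main obstacle I expect is getting the uniform contraction cleanly, in particular handling the non-compactness of $g_n(\mathbb{H}_{<0})=\mathbb{H}_{<-1}$ at the first step. One way is to start the intersection from $K_n\subset D$ with $n\ge 2$. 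Alternatively, one can use the monotone real orbits of $0$ and $\infty$ as explicit bracketing, but the hyperbolic argument is the one I would write.
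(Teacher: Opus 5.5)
Your strategy is the paper's: pull back the left half-plane, shrink the nested sets to a point by a uniform Schwarz--Pick contraction, and use conjugation symmetry for realness. Reading off $w_n\in(-1-\lambda,-1)$ from $D\cap\mathbb{R}$, rather than from $w_n=-1+\lambda_{n+1}/w_{n+1}$, is a fine variation.

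However, the step that produces the uniform contraction is false as written. You assert that each $g_i$ maps $\mathbb{H}_{<0}$ into $D\Subset\mathbb{H}_{<0}$. But you computed yourself that $g_i(\mathbb{H}_{<0})=\mathbb{H}_{<-1}$, which is not relatively compact in $\mathbb{H}_{<0}$, so your inclusion argument does not apply to a single $g_i$. In fact a single $g_i$ is not a uniform contraction of $\rho$ on $\mathbb{H}_{<0}$: its infinitesimal contraction factor at $z=-\epsilon$ is $\lambda_i/(\lambda_i+\epsilon)\to 1$. The repair is to apply your inclusion argument to the two-step maps $g_i\circ g_{i+1}$, which do send $\mathbb{H}_{<0}$ into $D$. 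This gives a uniform factor $c<1$ per pair, and single maps are non-expanding, so $\mathrm{diam}_\rho K_n\le c^{\lfloor (n-2)/2\rfloor}\,\mathrm{diam}_\rho D\to 0$. This is exactly the paper's observation that all second preimages lie in one compact subset of $\mathbb{H}_{<0}$. Alternatively, argue as the paper does that single maps contract uniformly on the compact set $\overline{D}$.

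There is a similar slip in the closure step.
\begin{itemize}
\item The inclusion $\overline{K_{n+1}}\subset K_n$ is false. For example, $\overline{K_2}$ is the closed disc on $[-1-\lambda_1,-1]$, which contains $-1\notin K_1=\mathbb{H}_{<-1}$. In general $\overline{K_{n+1}}$ contains $g_1\circ\cdots\circ g_n(\infty)=g_1\circ\cdots\circ g_{n-1}(-1)$, which is not in $K_n$.
\item Likewise $g_i(\overline D)\subset D$ fails when $\lambda_i=\lambda$, since $g_i(-1)=-1-\lambda\in\partial D$.
\end{itemize}
What is true is $\overline{K_{n+2}}\subset K_n$. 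Indeed, $\overline{K_{n+2}}=g_1\circ\cdots\circ g_n\bigl(\overline{g_{n+1}(\mathbb{H}_{<-1})}\bigr)$, and the closed disc on $[-1-\lambda_{n+1},-1]$ lies in $\mathbb{H}_{<0}$. That suffices to put the limit point in every $K_n$. The paper sidesteps this by pulling back the closed half-plane $\mathbb{H}_{\le 0}$, so its nested sets are already compact. With these two corrections your proof goes through.
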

\begin{proof}
    Throughout we write $f_n=f_{\lambda_n}$.
    Observe that the inverse map $f_n^{-1}$ is given by $f_n^{-1}(z)=-1+\lambda_n/z$ and maps the left half-plane into the shifted left half-plane $\mathbb H_{<-1}$.
    The Schwarz-Pick Lemma implies that each inverse map $f_n^{-1}$ defines a strict contraction with respect to the Poincar\'e metric on $\mathbb{H}_{<0}$. 
    On compact subsets of $\mathbb{H}_{<0}$ these contractions are uniform over all $\lambda_n \in (0,\lambda]$, since each $f_n^{-1}$ maps the left half-plane into $\mathbb{H}_{<-1}$.
    
    Consider the nested sequence of non-empty, connected, compact subsets of the extended complex plane $\widehat{\mathbb C}\coloneqq \mathbb{C} \cup \{\infty\}$:
    $$
    \mathbb{H}_{\leq 0} \supset f_1^{-1}(\mathbb{H}_{\leq 0}) \supset (f_2 \circ f_1)^{-1}(\mathbb{H}_{\leq 0}) \supset \cdots
    $$
    Then the intersection
    \begin{equation}\label{eq:intersection}
        \bigcap_{n\in \mathbb N} (f_n \circ \cdots \circ f_1)^{-1}(\mathbb{H}_{\leq 0})
    \end{equation}
    is a non-empty, connected, compact subset of $\mathbb{H}_{\leq 0}$. 
    Since all second preimages $f_{n-1}^{-1}\circ f_{n}^{-1} (\mathbb{H}_{\leq 0}),n\in \mathbb N$ are contained in a compact subset of $\mathbb{H}_{<0}$ and the contraction of the maps $f_n^{-1}$ is uniform over $n$, the diameter of the finite nested intersections must converge to $0$ and the intersection consists of a single point $w_0$, which is the only point whose entire forward orbit remains in $\mathbb{H}_{<0}$.
    
    Since the M\"obius transformations $f_n^{-1}$ have real parameters, it follows that the intersection~\eqref{eq:intersection} is invariant under conjugation and therefore each $w_n$ is real (and hence negative).
    This implies that $w_n< -1$ for each $n$. 
    Indeed, we have 
    \[
    w_{n}=f_{n+1}^{-1}(w_{n+1})=-1+\lambda_{n+1}/w_{n+1}< -1
    \] 
    using that $w_{n+1}<0$ for all $n$.
    This in turn implies that $w_n\geq -1-\lambda$ for each $n$. 
    Indeed 
    \[
    w_{n}=f_{n+1}^{-1}(w_{n+1})=-1+\lambda_{n+1}/w_{n+1}> -1-\lambda_{n+1}\geq -1-\lambda,
    \]
    using that $1/w_{n+1}> -1$ for all $n$.
    This finishes the proof.
\end{proof}

For a sequence ${\boldsymbol \lambda}=(\lambda_n)$ in $(0,\lambda]$ we write $f_n=f_{\lambda_n}$ for each $n$.
Let $(w_n)=(w_n(\boldsymbol \lambda))$ be the associated orbit that stays in $\mathbb{H}_{<0}$ 
We note that the point $w_n = f_n \circ \cdots \circ f_1(w_0)$ is the unique point whose orbit remains in $\mathbb{H}_{<0}$ for the maps $f_m \circ \cdots \circ f_{n+1}$, with $m > n$.

We next study the effect of exponentially smaller consecutive perturbations of the parameters $\lambda_n$ on the sequence $(w_n)$. Let us write $(\widehat{\lambda}_n)$ (and correspondingly $(\widehat{w}_n)$) for a second sequence of parameters.

\begin{lem}\label{lem:lambda close then also w close}
    Given $\lambda >0$, $C>0$ and $\alpha\in(0,1)$ there exists $C'>0$ such that the following holds. Given sequences $(\lambda_n)$ and $(\widehat\lambda_n)$ in $(0,\lambda]$ satisfying $|\lambda_n-\widehat\lambda_n|<C\alpha^n$ for all $n\in \mathbb N$, the associated orbits $(w_n)$  and $(\widehat w_n)$ satisfy
    \begin{equation*}
        |w_n-\widehat w_n|\leq C' \alpha^n\quad\text{for all }n\in \mathbb N.
    \end{equation*}
\end{lem}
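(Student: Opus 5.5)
The plan is to express $w_n$ as a backward (pullback) limit and compare the two sequences through the uniformly contracting inverse maps. From the proof of Lemma~\ref{lem:construction of w_n}, $w_n$ is the unique point whose forward orbit under $f_{n+1},f_{n+2},\dots$ stays in $\mathbb{H}_{<0}$. Equivalently,
\[
w_n=\lim_{m\to\infty} f_{n+1}^{-1}\circ\cdots\circ f_m^{-1}(z)
\]
for any $z\in[-1-\lambda,-1]$, and similarly for $\widehat w_n$ with $g_k:=f_{\widehat\lambda_k}$. All maps here are real and we may work on the interval $J:=[-1-\lambda,-1]$. The inverse $f_k^{-1}(z)=-1+\lambda_k/z$ maps $J$ into $J$, since $\lambda_k/z\in[-\lambda,0)$.

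First I would record two elementary estimates on $J$. The derivative satisfies $|(f_k^{-1})'(z)|=\lambda_k/z^2\le\lambda$ on $J$. This is not necessarily less than $1$, so a single step need not contract in the Euclidean metric; this is the main obstacle. To handle it I would use the composition of two steps, as the paper does, or the Poincaré metric of $\mathbb{H}_{<0}$ restricted to $J$. In the Poincaré metric, each $f_k^{-1}$ maps $\mathbb{H}_{<0}$ into $\mathbb{H}_{<-1}$. By Schwarz--Pick this gives a contraction by a factor $q<1$ depending only on $\lambda$, uniformly on the compact interval $J$, which lies at Euclidean distance $1$ from the imaginary axis. Concretely, the inclusion $\mathbb{H}_{<-1}\hookrightarrow\mathbb{H}_{<0}$ contracts the hyperbolic metric by a factor that is uniformly $<1$ on $J$. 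The hyperbolic metric and the Euclidean metric are comparable on $J$ with constants depending only on $\lambda$. The second estimate is the perturbation bound $|f_k^{-1}(z)-g_k^{-1}(z)|=|\lambda_k-\widehat\lambda_k|/|z|\le C\alpha^k$ for $z\in J$, which also holds in the hyperbolic metric up to a constant $K$.

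Next comes the telescoping. Write $d$ for the hyperbolic distance on $\mathbb{H}_{<0}$. Then
\[
d(w_n,\widehat w_n)=d\bigl(f_{n+1}^{-1}(w_{n+1}),g_{n+1}^{-1}(\widehat w_{n+1})\bigr)\le q\,d(w_{n+1},\widehat w_{n+1})+KC\alpha^{n+1},
\]
where I use the triangle inequality through $f_{n+1}^{-1}(\widehat w_{n+1})$. Iterating this $N$ times gives
\[
d(w_n,\widehat w_n)\le q^N D+KC\sum_{j\ge1}q^{j-1}\alpha^{n+j},
\]
where $D$ is the bounded hyperbolic diameter of $J$. Letting $N\to\infty$ gives $d(w_n,\widehat w_n)\le KC\alpha^{n+1}/(1-q\alpha)$, and the series converges since $q\alpha<1$. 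Converting back to the Euclidean metric on $J$ gives $|w_n-\widehat w_n|\le C'\alpha^n$, with $C'$ depending only on $\lambda$, $C$ and $\alpha$. I do not need $q<\alpha$, because the perturbation terms are summed forward geometrically.

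The one point to check carefully is the uniform contraction factor $q$. I would make it explicit by an alternative to Schwarz--Pick: take two steps in the Euclidean metric and verify directly that $|(f_k^{-1}\circ f_{k+1}^{-1})'|<1$ uniformly on $J$. If that fails for large $\lambda$, I fall back on the hyperbolic argument above, which gives the needed uniformity because $J$ is compact in $\mathbb{H}_{<0}$ and the image $\mathbb{H}_{<-1}$ is a fixed proper subdomain.
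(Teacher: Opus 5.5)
Your proposal is correct and is essentially the paper's argument: both proofs telescope backwards in the Poincar\'e metric of $\mathbb{H}_{<0}$, pass each step through $f_{n+1}^{-1}(\widehat w_{n+1})$, and sum the perturbation errors of size $C\alpha^{j}$ geometrically. The only difference is how the tail is handled. You use a uniform strict contraction (on $J$ one can take $q=\lambda/(1+\lambda)$) to kill the term $q^N D$ on the true orbits. The paper needs only non-expansion, because it compares the truncated pullbacks $w_{j,m},\widehat w_{j,m}$ started from the common point $-1$ and then lets $m\to\infty$.
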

\begin{proof}
    The sequence $(w_{n})_{n}$ was chosen such that
    \[
    \{w_{0}\}=\bigcap_{m\in\mathbb{N}}(f_{m}\circ\cdots\circ f_{1})^{-1}(\mathbb{H}_{<0}).
    \]
    In particular, for the point $-1\in\mathbb{H}_{<0}$, we have 
    \[
    w_{0}=\lim_{m\to\infty}(f_{m}\circ\cdots\circ f_{1})^{-1}(-1)
    \]
    and similarly 
    \begin{align*}
        w_{j} & =\lim_{m\to\infty}(f_{m}\circ\cdots\circ f_{j+1})^{-1}(-1)=\lim_{m\to\infty}w_{j,m},
    \end{align*}
    where 
    \[
    w_{j,m}:=f_{j+1}^{-1}\circ\cdots\circ f_{m}^{-1}(-1)=f_{j+1}^{-1}(w_{j+1,m}),
    \]
    for $m\ge j$. The analogous statements hold for $\widehat{w}_{j}$.
    We will show that there
    exists a constant $C_{1}>0$ independent of $m$ such that 
    \begin{equation}
        d_{\mathbb{H}_{<0}}(w_{j,m},\widehat{w}_{j,m})<C_{1}\alpha^{j}\quad\text{for all }j\leq m.\label{eq:wsPoincareClose}
    \end{equation}
    where $d_{\mathbb{H}_{<0}}$ denotes the Poincaré distance on $\mathbb{H}_{<0}$.
    
    Before we prove~\eqref{eq:wsPoincareClose} we first use it to prove the lemma.
    Note that all $w_{j,m},\widehat{w}_{j,m}$ lie in the compact subset $[-\lambda-1,-1]$
    of $\mathbb{H}_{<0}$ as follows by an easy induction, (cf. the proof of Lemma~\ref{lem:construction of w_n}).
    Since $d_{\mathbb{H}_{<0}}$ is equivalent to the Euclidean
    distance on  $[-\lambda-1,-1]$, (\ref{eq:wsPoincareClose}) implies that there is a constant $C'>0$
    such that 
    \[
    |w_{j}-\widehat{w}_{j}|=\lim_{m\to\infty}|w_{j,m}-\widehat{w}_{j,m}|<C'\alpha^{j}\quad\text{for all }j\in\mathbb{N}
    \]
    proving the lemma.
    
    We now return to proving~\eqref{eq:wsPoincareClose}.
    Let $C_{1}=\frac{C_{2}C\alpha}{1-\alpha}$, where $C_{2}>0$ is a
    constant such that $d_{\mathbb{H}_{<0}}(\zeta,\widehat{\zeta})\le C_{2}|\zeta-\widehat{\zeta}|$
    for all $\zeta,\widehat{\zeta}$ in the compact subset $[-1-\lambda,-1]$
    of $\mathbb{H}_{<0}$. (This choice of $C_{1}$ will become clear later).
    We will show (\ref{eq:wsPoincareClose}) by backwards induction on $j=m,m-1,\ldots,0$, note that $w_{m,m}=\widehat{w}_{m,m}=-1$
    and for $j\le m$ we have:
    \[
    d_{\mathbb{H}_{<0}}(w_{j-1,m},\widehat{w}_{j-1,m})\le d_{\mathbb{H}_{<0}}(f_{j}^{-1}(w_{j,m}),f_{j}^{-1}(\widehat{w}_{j,m}))+d_{\mathbb{H}_{<0}}(f_{j}^{-1}(\widehat{w}_{j,m}),\widehat{f}_{j}^{-1}(\widehat{w}_{j,m})).
    \]
    By induction and contraction of the Poincaré distance under the holomorphic
    map $f_{j}^{-1}:\mathbb{H}_{<0}\to\mathbb{H}_{<0}$, we have 
    \[
    d_{\mathbb{H}_{<0}}(f_{j}^{-1}(w_{j,m}),f_{j}^{-1}(\widehat{w}_{j,m}))\le d_{\mathbb{H}_{<0}}(w_{j,m},\widehat{w}_{j,m})\le C_{1}\alpha^{j}
    \]
    and by our choice of $C_{2}$, we have 
    \begin{align*}
        d_{\mathbb{H}_{<0}}(f_{j}^{-1}(\widehat{w}_{j,m}),\widehat{f}_{j}^{-1}(\widehat{w}_{j,m})) 
        &= d_{\mathbb{H}_{<0}} \paren[{\Big}]{-1+\frac{\lambda_{j}}{\widehat{w}_{j,m}},-1+\frac{\widehat{\lambda}_{j}}{\widehat{w}_{j,m}}}\le C_2\abs[\Big]{\frac{\lambda_{j}}{\widehat{w}_{j,m}}-\frac{\widehat{\lambda}_{j}}{\widehat{w}_{j,m}}}
        \\
        &\leq C_{2}|\lambda_{j}-\widehat{\lambda}_{j}|<C_{2}C\alpha^{j},
    \end{align*}
    where the first inequality uses that $\widehat{w}_{j,m}\leq -1$ and the second inequality uses the assumption in the lemma.
    Combining these estimates, we obtain:
    \[
    d_{\mathbb{H}_{<0}}(w_{j-1,m},\widehat{w}_{j-1,m})\le(C_{1}\alpha+C_{2}C\alpha)\alpha^{j-1}=C_{1}\alpha^{j-1}
    \]
    by our choice of $C_{1}$. This completes the proof of (\ref{eq:wsPoincareClose})
    and hence the lemma. 
\end{proof}

We use the orbit $(w_n)$ to change coordinates non-autonomously via:
\[
\phi_n(z) = \phi_{w_n}(z) = \frac{1}{z-w_n},
\]
and obtain a sequence of affine maps:
\begin{equation}\label{eq:define g_n}
    g_n = \phi_n \circ f_n \circ \phi_{n-1}^{-1},
\end{equation}
see Figure~\ref{fig:diagram} for the associated commutative diagram.
\begin{figure}[h] 
    \[
    \begin{tikzcd}
        \widehat{\mathbb{C}} \arrow[r, "f_1"] \arrow[d, "\phi_0"'] 
        &  \widehat{\mathbb{C}}  \arrow[r, "f_2"] \arrow[d, "\phi_1"'] 
        & \widehat{\mathbb{C}}  \arrow[r, "f_3"]  \arrow[d, "\phi_2"]
        & \widehat{\mathbb{C}} \arrow[d, "\phi_3"'] \mathrlap{{}\cdots}
        \\
        \widehat{\mathbb{C}}  \arrow[r, "g_1"']              
        & \widehat{\mathbb{C}}  \arrow[r, "g_2"']              
        & \widehat{\mathbb{C}} \arrow[r, "g_2"']              
        & \widehat{\mathbb{C}}  \mathrlap{{}\cdots}
    \end{tikzcd}
    \]
    
    \caption{A commutative diagram displaying the relation between the $f_n$ and the $g_n$.\label{fig:diagram}}
\end{figure}
Since $f_n$ maps $w_n$ to $w_{n+1}$ and $\phi_n(w_n)=\infty$, the M\"obius transformation $g_n$ maps $\infty$ to $\infty$, and is therefore \emph{affine}. Let us write down the precise formula for $g_n$:
\[
\begin{aligned}
    g_n(\zeta) = {} & \phi_n \circ f_n \circ \phi_{n-1}^{-1}(\zeta)\\
    = {} & \phi_n \circ f_n( \frac{1}{\zeta} + w_{n-1})\\
    = {} & \phi_n \paren[\bigg]{\frac{\lambda_n \zeta}{1+\zeta w_{n-1} + \zeta}}\\
    = {} & \frac{1 + (1+w_{n-1})\zeta}{\lambda_n \zeta - w_{n}(1+ (1+w_{n-1})\zeta)}\\
    = {} & - \frac{1 + (w_{n-1} + 1) \zeta}{w_n},
\end{aligned}
\]
where we used the fact that $g_n$ is affine to obtain the last equality. 


Let $G_n=g_n\circ \cdots \circ g_1$.
Then
\begin{align}\label{eq:G_n prime}
    G_n^\prime(\zeta)=(g_n \circ \cdots \circ g_1)^\prime(\zeta) = (-1)^n \prod_{j=1}^n \frac{w_{j-1} + 1}{w_j} = (-1)^n \frac{w_0+1}{w_n}  \cdot \prod_{j=1}^{n-1} \frac{w_j + 1}{w_j}.
\end{align}

Following the notation from Lemma \ref{lem:lambda close then also w close} we write $\widehat{g}_n$ and $\widehat{G}_n$ for the maps induced by the second sequence $(\widehat{\lambda}_n)$.

\begin{cor}\label{cor:derivative bound on ratio of G_n}
    Given $\lambda_+>\lambda_->0$, $C>0$ and $\alpha\in (0,1)$ there exists a constant $A>1$ such that the following holds: if $(\lambda_n)$ and $(\widehat\lambda_n)$ in $[\lambda_-,\lambda_+]$  satisfy $|\lambda_n-\widehat \lambda_n|<C\alpha^n$ for all $n$, then 
    \begin{equation}
        \label{eq:ratioOfGDerivativesBound}
        1/A\leq \abs[\bigg]{\frac{G_{n}'}{\widehat{G}_{n}'}} \leq A.
    \end{equation}
\end{cor}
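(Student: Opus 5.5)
The plan is to use the explicit product formula~\eqref{eq:G_n prime} for $G_n'$ and for $\widehat G_n'$ and to compare the two products factor by factor:
\[
\frac{G_n'}{\widehat G_n'}=\frac{w_0+1}{\widehat w_0+1}\cdot\frac{\widehat w_n}{w_n}\cdot\prod_{j=1}^{n-1}\frac{(w_j+1)\,\widehat w_j}{(\widehat w_j+1)\,w_j}.
\]
It then suffices to show two things: the boundary factors are bounded above and below uniformly, and $\sum_j \bigl|\log\frac{(w_j+1)\widehat w_j}{(\widehat w_j+1)w_j}\bigr|$ is bounded uniformly in $n$. The product is then bounded above and below by $e^{\pm S}$ for some constant $S$, which gives $A$.

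\emph{Step 1: uniform a priori bounds.} By Lemma~\ref{lem:construction of w_n}, all $w_j,\widehat w_j$ lie in $(-\lambda_+-1,-1)$, so $|w_j|,|\widehat w_j|\in(1,\lambda_++1)$ and the factor $\widehat w_n/w_n$ is harmless. The delicate point is that $w_j+1$ could be close to $0$, and this is where the lower bound $\lambda_-$ enters. From $w_j=-1+\lambda_{j+1}/w_{j+1}$ and $|w_{j+1}|<\lambda_++1$ we get
\[
-(w_j+1)=\lambda_{j+1}/|w_{j+1}|\ \ge\ \lambda_-/(\lambda_++1)=:\delta>0 .
\]
Hence all $w_j+1$ and $\widehat w_j+1$ lie in the compact interval $[-\lambda_+,-\delta]$, which is bounded away from $0$. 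In particular $|w_0+1|/|\widehat w_0+1|\in[\delta/\lambda_+,\lambda_+/\delta]$.

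\emph{Step 2: summable errors.} Apply Lemma~\ref{lem:lambda close then also w close} with $\lambda=\lambda_+$ to obtain $C'$ with $|w_j-\widehat w_j|\le C'\alpha^j$. Since $\log|x|$ is Lipschitz on $[-\lambda_+-1,-1]$ and on $[-\lambda_+,-\delta]$, with constants $1$ and $1/\delta$ respectively, each factor satisfies
\[
\Bigl|\log\Bigl|\frac{(w_j+1)\widehat w_j}{(\widehat w_j+1)w_j}\Bigr|\Bigr|\le (1+1/\delta)\,C'\alpha^j .
\]
Summing over $j$ gives at most $(1+1/\delta)C'\alpha/(1-\alpha)$. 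Combining this with the boundary factors yields $A$ depending only on $\lambda_\pm,C,\alpha$. All quantities are real and negative, so the signs $(-1)^n$ cancel and only absolute values matter.

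The main obstacle is Step 1: keeping $w_j+1$ uniformly away from $0$. This is exactly why the corollary requires $\lambda_n\ge\lambda_->0$, rather than merely $\lambda_n\in(0,\lambda]$ as in Lemma~\ref{lem:construction of w_n}. Once this bound is in place, the rest is a telescoping logarithmic estimate driven by the exponential closeness from Lemma~\ref{lem:lambda close then also w close}.
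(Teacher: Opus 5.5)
Your proof is correct and follows the paper's argument: both use the product formula~\eqref{eq:G_n prime}, compare $G_n'$ and $\widehat G_n'$ factor by factor, and rely on the summability of $C'\alpha^j$ from Lemma~\ref{lem:lambda close then also w close}. The only cosmetic difference is that you work with logarithms, whereas the paper bounds each factor by $1+C_3|w_j-\widehat w_j|$.

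Your Step~1 is more careful than the paper. The paper works on the interval $[-\lambda_+-1,-\lambda_-]$, but this interval can contain $-1$, where $(w+1)/(\widehat w+1)$ is singular. It also need not contain all the $w_j$. Your bound $w_j\in[-\lambda_+-1,\,-1-\lambda_-/(\lambda_++1)]$ is exactly what makes the factor estimate valid.
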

\begin{proof}
    By \eqref{eq:G_n prime}, we know for all $n$ that 
    \begin{equation}\label{eq:QuotientOfDerivatives}
        \abs[{\bigg}]{\frac{G_{n}'}{\widehat{G}_{n}'}} = \prod_{j=1}^{n}\frac{w_{j-1}+1}{\widehat{w}_{j-1}+1}\frac{\widehat{w}_{j}}{w_{j}}.
    \end{equation}
    On the compact set $[-\lambda_{+}-1,-\lambda_{-}]^{2}$ both $\frac{\widehat{w}}{w}$
    and $\frac{w+1}{\widehat{w}+1}$ have bounded gradient and are equal to
    $1$ on the diagonal, so there is a constant $C_{3}>0$ such that we have 
    \[
    \frac{\widehat{w}}{w},\frac{w+1}{\widehat{w}+1} < 1+C_{3}|w-\widehat{w}|
    \]
    for all $w,\widehat{w}\in[-\lambda_{+}-1,-\lambda_{-}]$. Since all $w_j,\widehat{w}_j$ lie in this interval, (\ref{eq:QuotientOfDerivatives}) then implies 
    \[
    \abs[{\bigg}]{\frac{G_{n}'}{\widehat{G}_{n}'}} \le \prod_{j=1}^{n}(1+C_{3}|w_j - \widehat{w}_j|)^{2} < \prod_{j=1}^{n}(1+C_{3}C'\alpha^{j})^{2}.
    \]
    with $C'$ from Lemma~\ref{lem:lambda close then also w close}. 
    This product is bounded by 
    \[A:= \paren*{e^{C_3C'\tfrac{\alpha}{1-\alpha}}}^2.
    \]
    This shows the second inequality in \eqref{eq:ratioOfGDerivativesBound} and the first inequality follows by symmetry. 
\end{proof}

Next, we relate the supremum for a compact set $K$,
\[
\norm{F_n'}_K \coloneqq \sup_{z\in K}\abs{F_n'(z)} 
\]
of the absolute value of the derivative of $F_n=f_n\circ \cdots \circ f_1$ to that of $G_n=g_n\circ \cdots \circ g_1$.

\begin{lem}
    \label{lem:compare Derivative-general}Let $K,L\subseteq\mathbb{C}$
    be disjoint compact subsets. Then there exists a constant
    $C>0$ (depending only on $K$ and $L$) such that for any differentiable
    map $F:K\to K$ and 
    \[
    G=\phi_{w'}\circ F\circ\phi_{w}^{-1}:\phi_{w}(K)\to\phi_{w'}(K)
    \]
    where $\phi_{w}(z)=\frac{1}{z-w}$ with $w,w'\in L$, we have 
    \[
    \frac{1}{C}\norm{F'}_K<\norm{G'}_{\phi_w(K)}<C\norm{F'}_K.
    \]
\end{lem}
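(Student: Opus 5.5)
The plan is to use the chain rule and bound the derivatives of the coordinate changes. Write $G=\phi_{w'}\circ F\circ\phi_w^{-1}$. For $\zeta\in\phi_w(K)$ set $z=\phi_w^{-1}(\zeta)=w+1/\zeta\in K$. Then
\[
G'(\zeta)=\phi_{w'}'(F(z))\,F'(z)\,(\phi_w^{-1})'(\zeta).
\]
We have $\phi_{w'}'(y)=-1/(y-w')^2$ and $(\phi_w^{-1})'(\zeta)=-1/\zeta^2=-(z-w)^2$. Hence
\[
\abs{G'(\zeta)}=\frac{\abs{z-w}^2}{\abs{F(z)-w'}^2}\,\abs{F'(z)}.
\]
This identity is the whole argument; what remains is to bound the prefactor above and below uniformly.

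Since $K$ and $L$ are disjoint compact sets, $\delta:=\operatorname{dist}(K,L)>0$. Let $D:=\sup\{\abs{x-y}: x\in K,\ y\in L\}<\infty$. Both $z$ and $F(z)$ lie in $K$, because $F$ maps $K$ to $K$, while $w,w'\in L$. Therefore $\abs{z-w}$ and $\abs{F(z)-w'}$ both lie in $[\delta,D]$, and the prefactor lies in $[\delta^2/D^2,\,D^2/\delta^2]$. Setting $C:=2D^2/\delta^2$ (the factor $2$ makes the inequalities strict), we get pointwise
\[
\tfrac1C\abs{F'(z)}<\abs{G'(\zeta)}<C\abs{F'(z)}
\]
whenever $F'(z)\neq0$, and equality of both sides with $0$ otherwise. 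The constant $C$ depends only on $K$ and $L$.

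Finally, take suprema. The map $\zeta\mapsto z=\phi_w^{-1}(\zeta)$ is a bijection from $\phi_w(K)$ onto $K$, so the pointwise bound gives $\norm{G'}_{\phi_w(K)}\le \tfrac{C}{2}\norm{F'}_K$ and $\norm{G'}_{\phi_w(K)}\ge \tfrac{2}{C}\norm{F'}_K$. These yield the stated strict inequalities when $\norm{F'}_K>0$. In the degenerate case $F'\equiv0$ on $K$, both sides vanish and the strict inequality must be read as non-strict; I would note this in a remark. There is no real obstacle here. The only point needing care is that $\phi_w(K)$ is a compact subset of $\mathbb{C}$, which holds because $w\notin K$, and that the derivative of $\phi_w^{-1}$ is correctly expressed in terms of $z-w$.
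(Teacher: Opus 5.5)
Your proof is correct and is essentially the paper's argument: the chain rule, combined with uniform two-sided bounds on $|\phi_{w'}'|$ on $K$ and on $|(\phi_w^{-1})'|$ on $\phi_w(K)$, which follow from $\operatorname{dist}(K,L)>0$ and the boundedness of $K\cup L$. Writing out the prefactor $|z-w|^2/|F(z)-w'|^2$ and enlarging $C$ by a factor $2$ makes your version slightly more careful than the paper's proof, which only establishes the non-strict inequalities; as you note, the strict ones fail trivially when $F'\equiv 0$ on $K$.
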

\begin{proof}
    As the derivatives $\phi_{w}'(z)=-\frac{1}{(z-w)^{2}}$ are uniformly
    bounded away from $0$ and $\infty$ for $z\in K$ and $w\in L$,
    there is a uniform bound $\sqrt{C}>0$ on the derivatives
    of $\phi_{w}|_{K}$ and $\phi_{w}^{-1}|_{\phi_{w}(K)}$ for all $w\in L$.
    Hence, we have 
    \[
    \norm{G'}_{\phi_w(K)}
    \le \norm{\phi_{w'}'}_{K} \norm{F'}_K \norm{(\phi_{w}^{-1})'}_{\phi_{w}(K)}
    \le C \norm{F'}_K
    \]
    and vice versa.
\end{proof}


\begin{cor}\label{cor:compare Derivative}
    Let $\lambda>0$, let $K\subset \mathbb{H}_{>-1/2}$ be a compact set. Then there exists a constant $C>0$ such that the following holds. 
    
    For any sequence ${\boldsymbol \lambda}=(\lambda_n)$ in $(0,\lambda]$,
    let $f_n:=f_{\lambda_n}$, $F_n(z)=f_n\circ \cdots \circ f_1$ and let $(w_n)$ be the unique orbit that stays in $\mathbb{H}_{<0}$.
    Let $(g_n)$ be the associated sequence of affine transformations defined by~\eqref{eq:define g_n} and let $G_n(z):=g_n\circ \cdots \circ g_1$.
    Then, if $\norm{G_n'}_{\phi_0(K)} \le \eta$, then $\norm{F_n'}_{K} \le C\eta$ and conversely, if $\norm{F_n'}_{K} \le \eta$, then $\norm{G_n'}_{\phi_0(K)} \le C \eta$.
\end{cor}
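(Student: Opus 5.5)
The plan is to deduce the corollary directly from Lemma~\ref{lem:compare Derivative-general}, applied with $w=w_0$, $w'=w_n$ and the map $F=F_n$. Two things must be checked. First, the relevant points lie in a compact set $L$ disjoint from the set on which the derivatives are taken. Second, $F_n$ maps that set into a compact set that is likewise disjoint from $L$.

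\textbf{The set $L$ of base points.} By Lemma~\ref{lem:construction of w_n}, every $w_n$ lies in $[-\lambda-1,-1]$, and this interval does not depend on the sequence $\boldsymbol\lambda$. Take $L=[-\lambda-1,-1]$.

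\textbf{The compact set for the lemma.} The lemma asks for a map $K\to K$, but $F_n$ need not map $K$ into itself. The fix is to enlarge $K$ to a compact set $K'$ with two properties: it contains $K$ and all images $F_n(K)$, and it is disjoint from $L$.

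The key observation is that $f_\mu$ with $\mu\in(0,\lambda]$ maps the half-plane $\mathbb{H}_{>-1/2}$ into a bounded set. Indeed, for $\mathrm{Re}\, z>-1/2$ we have $|1+z|>1/2$, so $|f_\mu(z)|<2\lambda$. Moreover, $f_\mu(\mathbb{H}_{\ge0})$ is the closed disc with diameter $[0,\mu]$, which lies in $\mathbb{H}_{\ge0}$. So it suffices to check that $F_n(K)$ lands in the fixed compact set
\[
K'' := \{|z|\le 2\lambda,\ \mathrm{Re}\, z\ge 0\}
\]
for $n\ge 1$ and every admissible sequence.

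\textbf{The main obstacle.} $K$ sits in $\mathbb{H}_{>-1/2}$, not in $\mathbb{H}_{\ge 0}$. One application of $f_{\lambda_1}$ sends $K$ into $\{|z|\le 2\lambda\}$, but possibly with negative real part. To handle this, I would use the compactness of $K$: there is $\delta>0$ with $\mathrm{Re}\, z\ge -1/2+\delta$ on $K$. Then the image $f_\mu(K)$ lies in $f_\mu$ of a closed half-plane $\mathbb{H}_{\ge -1/2+\delta}$. This image is a closed disc symmetric about the real axis, of the form $\{|z-c_\mu|\le c_\mu\}$ with $c_\mu=\mu/(2\cdot(1/2+\delta))$, hence contained in $\mathbb{H}_{\ge 0}$ and in $\{|z|\le 2\lambda\}$.

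So already $F_1(K)\subseteq K''$, and $K''$ is invariant under every $f_\mu$. Set $K'=K\cup K''$. Since $L\subseteq \mathbb{H}_{\le -1}$ and $K'\subseteq \mathbb{H}_{>-1/2}$, the sets $K'$ and $L$ are disjoint.

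\textbf{Applying the lemma.} Apply Lemma~\ref{lem:compare Derivative-general} (or really its proof, which only uses derivative bounds for $\phi_w$ on $K$ and for $\phi_{w'}$ on $F(K)\subseteq K'$) to the pair $(K',L)$. Since $G_n=\phi_n\circ F_n\circ\phi_0^{-1}$, the chain rule together with the uniform two-sided bounds on $|\phi_w'|$ over $K'\times L$ give
\[
\norm{G_n'}_{\phi_0(K)}\le C\,\norm{F_n'}_K
\quad\text{and}\quad
\norm{F_n'}_{K}\le C\,\norm{G_n'}_{\phi_0(K)}.
\]
These are taken pointwise over $K$, with a constant $C$ depending only on $K$ and $\lambda$. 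This yields both implications of the corollary.
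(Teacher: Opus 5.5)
Your proposal is correct and is essentially the paper's argument. You enlarge $K$ to a compact set in $\mathbb{H}_{>-1/2}$ that absorbs all forward images: the paper adds the closed disc centred at $\lambda$ of radius $\lambda$, and you add $\{|z|\le 2\lambda,\ \mathrm{Re}\, z\ge 0\}$. You take $L=[-\lambda-1,-1]$ and apply Lemma~\ref{lem:compare Derivative-general} through its pointwise chain-rule estimate, which you rightly note is what gives sup-norms over $K$ rather than over the enlarged set. Your $\delta$-step is unnecessary, since $f_\mu(\mathbb{H}_{>-1/2})$ is already the open disc centred at $\mu$ of radius $\mu$, so $F_1(K)$ never has negative real part; it is harmless, though.
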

\begin{proof}
    For any $\lambda'\in[0,\lambda]$, the Möbius transformation $f_{\lambda'}$
    maps lines and circles to lines and circles and preserves angles.
    Since $f_{\lambda'}$ maps the real line to itself and is decreasing
    there, $f_{\lambda'}(\mathbb{H}_{>-1/2})$ is the disc $B_{\lambda'}$
    in the interior of the circle centred on the real line through $f_{\lambda'}(\infty)=0$
    and $f_{\lambda'}(-1/2)=2\lambda'$.
    
    Let $K'\subseteq\mathbb{H}_{>-1/2}$ be a compact set (e.g. a
    large closed ball) containing both $K$ and the closed ball $\overline{B}_{\lambda}$.
    Then for any $\lambda'\in[0,\lambda]$, we have $f_{\lambda'}(\mathbb{H}_{>-1/2})=B_{\lambda'}\subseteq\overline{B}_{\lambda}\subseteq K'$
    and hence $F_{n}(K')\subseteq K'$ independently of the sequence $(\lambda_{n})$.
    As $G_{n}=\phi_{n}\circ F_{n}\circ\phi_{0}^{-1}$, taking $L=[-\lambda-1,-1]$, which is clearly disjoint from $K'$, the statement follows
    from Lemma~\ref{lem:compare Derivative-general}
\end{proof}

The next lemma gives us information about the derivative of $F_n=f_n\circ \cdots \circ f_1$.

\begin{lem}
    \label{lem:Derivative-via-image-diameter}
    Let $D$ be a disc centred
    on the real line and $K\subseteq D$ a compact subset. Let $c_{1}$
    and $c_{2}$ be the two points where the boundary of $D$ intersects
    the real line $\mathbb{R}$. Then there exists a constant $C>0$ such
    that for any real Möbius transformation $F$ with $F(D)\subseteq\mathbb{C}$,
    we have 
    \[
    |F(z)-F(w)|<C|F(c_{2})-F(c_{1})|\cdot|z-w|
    \]
    for all $z,w\in K$.
\end{lem}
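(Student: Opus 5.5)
Since $F$ is a real Möbius transformation with $F(D)\subseteq\mathbb{C}$, its pole $p$ lies outside $D$, and being real, $p\in\widehat{\mathbb{R}}\setminus D$; more precisely $p$ lies on the real line outside the open interval $(c_1,c_2)$ (or $p=\infty$). Write $F(z)=a+\frac{b}{z-p}$ (or $F$ affine when $p=\infty$). The plan is to compare both sides explicitly. We have
\[
F(z)-F(w)=\frac{-b(z-w)}{(z-p)(w-p)},\qquad F(c_2)-F(c_1)=\frac{-b(c_2-c_1)}{(c_2-p)(c_1-p)},
\]
so the desired inequality reduces to bounding
\[
\frac{|c_1-p|\,|c_2-p|}{|c_2-c_1|\,|z-p|\,|w-p|}
\]
uniformly over $z,w\in K$ and all admissible $p$. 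In the affine case the quotient $|F(z)-F(w)|/(|F(c_2)-F(c_1)|\,|z-w|)$ is just $1/|c_2-c_1|$, which is fine.

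The key step is a uniform bound $|c_i-p|\le C_0|z-p|$ for $z\in K$ and $p\in\mathbb{R}\setminus(c_1,c_2)$. Let $\delta=\operatorname{dist}(K,\partial D)>0$, using that $K$ is a compact subset of the open disc. Then $|z-p|\ge\delta$, since $p\notin D$. If $|p|$ is large, both $|c_i-p|$ and $|z-p|$ are comparable to $|p|$, because $K$ and the $c_i$ are bounded. Combining these, $|c_i-p|\le |c_i-z|+|z-p|\le (\operatorname{diam}\overline D/\delta+1)|z-p|$. Applying this to $c_1$ with $z$ and to $c_2$ with $w$ bounds the expression by $C_0^2/|c_2-c_1|$, which gives the constant $C$, depending only on $D$ and $K$.

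I don't expect a real obstacle. The only care needed is to justify that the pole is real and outside $D$ (a real Möbius map has real coefficients, hence a real pole or pole at $\infty$), and to handle the degenerate affine case. The strict inequality follows by enlarging $C$ slightly; the case $z=w$ is trivial, where strictness fails and one should read the statement as non-strict or for $z\neq w$.
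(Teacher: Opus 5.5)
Your proof is correct, and it takes a genuinely different route from the paper's. The paper argues with hyperbolic geometry. On $K$ the Poincar\'e distance $d_D$ is comparable to the Euclidean distance. The image $F(D)$ is the disc with diameter $[F(c_1),F(c_2)]$, so its Poincar\'e distance dominates $2/|F(c_2)-F(c_1)|$ times the Euclidean distance. Schwarz--Pick then links the two metrics.

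You instead use the normal form $F(z)=a+b/(z-p)$, with a real pole $p\notin(c_1,c_2)$ (or the affine case). The factor $b$ cancels, and everything reduces to the estimate $|c_i-p|\le\bigl(1+|c_2-c_1|/\delta\bigr)|z-p|$ with $\delta=\operatorname{dist}(K,\partial D)$. You prove this correctly from $|z-p|\ge\delta$ and the triangle inequality.

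What each approach buys:
\begin{itemize}
\item Your route is elementary and fully explicit. It needs neither Schwarz--Pick nor the identification of the image disc, and it gives a computable constant.
\item The paper's route is coordinate-free. It only uses that $F$ is holomorphic on $D$ and maps $D$ into a disc whose diameter is comparable to $|F(c_2)-F(c_1)|$, so it would apply beyond M\"obius maps.
\item If you track constants, the hyperbolic argument gives $C$ of order $1/\delta$, using convexity of the disc to bound the Poincar\'e density by about $2/\delta$ along segments in $K$. Your approach bounds $|c_1-p|/|z-p|$ and $|c_2-p|/|w-p|$ separately, which gives order $|c_2-c_1|/\delta^2$. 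This loss does not matter here, since the lemma only asserts that some $C$ exists.
\end{itemize}

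Your remarks on the edge cases are also right. The strict inequality fails for $z=w$, and the paper's own proof has the same issue. If the pole lies at $c_1$ or $c_2$, the right-hand side is infinite and the statement is trivial, even though your formula for $F(c_2)-F(c_1)$ is not literally defined there.
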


\begin{proof}
    On the compact subset $K\subseteq D$, the Poincaré metric $d_{D}$
    on $D$ is comparable to the Euclidean metric. So there exists a constant
    $C>0$ such that for $z,w\in K$, we have 
    \[
    |z-w|>\frac{1}{C}d_{D}(z,w).
    \]
    As before, the image $F(D)$ under the real Möbius transformation
    $F$ is bounded by the circle centred on the real line through $F(c_{1})$
    and $F(c_{2})$. Since it is contained in $\mathbb{C}$ by assumption,
    $F(D)$ must be the interior of that circle. The Poincaré distance
    on a disc of radius $r$ is bounded from below by $1/r$ times the
    Euclidean distance. Hence, by non-expansivenes of the Poincaré distance
    under the holomorphic map $F$, we conclude: 
    \[
    C|z-w|>d_{D}(z,w)\ge d_{F(D)}(F(z),F(w))\ge\frac{2}{|F(c_{2})-F(c_{1})|}|F(z)-F(w)|,
    \]
    where $d_{F(D)}$ denotes the Poincaré distance on $F(D)$. 
\end{proof}

The following corollary will be used in the proof of our main theorem.

\begin{cor}\label{cor:derivative bound F}
    Let $\Delta\in\mathbb{N}$ and let
    $\lambda>0$. Let $K\subset B_{\lambda/2}(\lambda/2)$ be a compact
    set containing a neighbourhood of $[\ell_{\Delta}(\lambda),r_{\Delta}(\lambda)]$.
    There exists a constant $C>0$ such that for any sequence $(\lambda_{n})$
    in $[\ell_{\Delta}(\lambda),\lambda]$, for all $n\in\mathbb{N}$
    and the maps $F_{n}=f_{\lambda_{n}}\circ\cdots\circ f_{\lambda_{1}}$,
    we have 
    \begin{equation}
    \norm{F_{n}'}_{K}\leq C|F_{n}(0)-F_{n}(\lambda)|.\label{eq:derivative bound F}
    \end{equation}
\end{cor}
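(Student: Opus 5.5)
The plan is to apply Lemma~\ref{lem:Derivative-via-image-diameter} directly, with the disc $D:=B_{\lambda/2}(\lambda/2)$. This disc is centred on the real line, and its boundary meets $\mathbb{R}$ exactly at $c_1=0$ and $c_2=\lambda$. Since that lemma gives a Lipschitz bound rather than a derivative bound, we will apply it on a slightly larger compact set and pass to the limit.

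First we check the hypothesis $F_n(D)\subseteq\mathbb{C}$ for every real M\"obius map $F_n=f_{\lambda_n}\circ\cdots\circ f_{\lambda_1}$ with $\lambda_i\in[\ell_\Delta(\lambda),\lambda]$. Note that $D\subseteq\mathbb{H}_{>0}$. For $\lambda'\in(0,\lambda]$, the map $f_{\lambda'}$ is a real M\"obius transformation sending $\mathbb{H}_{>0}$ onto a disc bounded by a circle centred on $\mathbb{R}$. Because $f_{\lambda'}$ is decreasing on $\mathbb{R}$, $f_{\lambda'}(0)=\lambda'$ and $f_{\lambda'}(\infty)=0$, this image is the disc $B_{\lambda'/2}(\lambda'/2)$, which is contained in $D$ (this is the same reasoning as in the proof of Corollary~\ref{cor:compare Derivative}). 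Hence $f_{\lambda'}(D)\subseteq D$, and by induction $F_n(D)\subseteq D\subseteq\mathbb{C}$ for all $n$, uniformly in the sequence $(\lambda_n)$.

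Next, since $K\subset D$ is compact, we choose a compact set $K'$ with $K\subset \operatorname{int}K'\subset K'\subset D$; for instance, a closed $\delta$-neighbourhood of $K$ with $\delta<\operatorname{dist}(K,\partial D)$. Lemma~\ref{lem:Derivative-via-image-diameter} applied to $D$ and $K'$ gives a constant $C>0$, depending only on $D$ and $K'$ (hence only on $\lambda$ and $K$), such that
\[
|F_n(z)-F_n(w)|< C\,|F_n(\lambda)-F_n(0)|\cdot|z-w|
\]
for all $z,w\in K'$, all $n$, and all admissible sequences. Now fix $z\in K$ and let $w\to z$ with $w\in K'$, which is possible because $z$ is an interior point of $K'$. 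Since $F_n$ is holomorphic on $D$, the difference quotients converge to $F_n'(z)$, so $|F_n'(z)|\le C|F_n(0)-F_n(\lambda)|$. Taking the supremum over $z\in K$ gives~\eqref{eq:derivative bound F}.

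The only step needing care is the verification that $F_n(D)$ avoids $\infty$ uniformly in $n$, and the invariance $f_{\lambda'}(D)\subseteq D$ settles this. The assumption that $K$ contains a neighbourhood of $[\ell_\Delta(\lambda),r_\Delta(\lambda)]$ and the lower bound $\lambda_n\ge\ell_\Delta(\lambda)$ are not needed for this estimate; presumably they matter only where the corollary is later applied.
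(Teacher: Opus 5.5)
Your proposal is correct and is essentially the paper's proof: apply Lemma~\ref{lem:Derivative-via-image-diameter} with $D=B_{\lambda/2}(\lambda/2)$, $c_1=0$, $c_2=\lambda$, then convert the resulting Lipschitz bound into a bound on $F_n'$. The differences are cosmetic. You justify $F_n(D)\subseteq\mathbb{C}$ explicitly via $f_{\lambda'}(D)\subseteq D$, where the paper just says ``clearly''. You also pass to derivatives on a compact $K'\subset D$ containing $K$ in its interior, whereas the paper enlarges $K$ to be connected so that no point of $K$ is isolated.
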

\begin{proof}
    Possibly enlarging $K$, we may assume $K$ to be connected. Fix $n\in\mathbb{N}$. Clearly $F_{n}(B_{\lambda/2}(\lambda/2))\subset\mathbb{C}$.
    So Lemma~\ref{lem:Derivative-via-image-diameter} applied to $K$
    and $D=B_{\lambda/2}(\lambda/2)$ provides a constant $C>0$ such
    that for all $x,y\in K$ we have 
    \[
    |F_{n}(x)-F_{n}(y)|\leq C|F_{n}(0)-F_{n}(\lambda)||x-y|.
    \]
    Since $K$ is connected, this implies \eqref{eq:derivative bound F}.
\end{proof}

\section{Proof of Theorem~\ref{thm:main}} \label{sec:proof of main}

In this section we prove Theorem~\ref{thm:main}.
We use the framework developed in Section~\ref{sec:dynamics} above to provide a proof that VSSM implies absence of zeros. 

We start with a simple but crucial monotonicity property of the map $F_{T,n}$ as defined in~\eqref{eq:define F_n}.
\begin{lem}\label{lem:monotone behaviour of F}
    Let $(T,v)$ be a rooted tree.
    If $n$ is even the function $F_{T,n}$ is monotonically increasing in each coordinate, while if $n$ is odd the function $F_{T,n}$ is monotonically decreasing in each coordinate.
\end{lem}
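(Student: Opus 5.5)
Both parity claims come out of one induction on $n$, built on the recursion~\eqref{eq:recursion ratio tree}. The inputs $(R_u)_{u\in S_T(v,n)}$ are nonnegative reals, as is natural for ratios at $\lambda>0$. Monotonicity is understood on the domain $[0,\infty)^{S_T(v,n)}$, where every denominator is at least $1$.

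\emph{Base case $n=1$.} Let $v_1,\ldots,v_d$ be the neighbours of $v$. By~\eqref{eq:recursion ratio tree},
\[
F_{T,1}\bigl((R_{v_i})_i\bigr)=\frac{\lambda}{\prod_{i=1}^d(1+R_{v_i})}.
\]
Each factor $1+R_{v_i}$ is positive and increasing in $R_{v_i}$, and the other factors are positive. So $F_{T,1}$ is decreasing in each coordinate, which is the odd case.

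\emph{Inductive step $n\ge 2$.} Let $T_i$ be the component of $T-v$ containing $v_i$, rooted at $v_i$. Then $S_T(v,n)$ is the disjoint union of the sets $S_{T_i}(v_i,n-1)$. Each $R_u$ is the same quantity whether computed in $T$ or in $T_i$, because the component of $T\setminus S_T(v,n-1)$ containing $u$ lies inside $T_i$ and coincides with the corresponding component of $T_i\setminus S_{T_i}(v_i,n-2)$. Applying~\eqref{eq:recursion ratio tree} gives
\[
F_{T,n}\bigl((R_u)_u\bigr)=\frac{\lambda}{\prod_{i=1}^d\bigl(1+F_{T_i,n-1}((R_u)_{u\in S_{T_i}(v_i,n-1)})\bigr)}.
\]
Fix a coordinate $u$, and let $i$ be the unique index with $u\in T_i$. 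Only the factor $1+F_{T_i,n-1}$ depends on $R_u$. The other factors are positive constants, since each $F_{T_j,n-1}$ is a ratio and hence nonnegative. So $F_{T,n}$ is $\lambda/\bigl(c(1+x)\bigr)$ with $c>0$ and $x=F_{T_i,n-1}$, and this is decreasing in $x$. By induction, $F_{T_i,n-1}$ is increasing in $R_u$ when $n-1$ is even and decreasing when $n-1$ is odd. Composing with the decreasing outer map flips the direction: $F_{T,n}$ is decreasing in $R_u$ for $n$ odd and increasing for $n$ even.

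The only real obstacle is bookkeeping. One has to identify the subtree ratios $R_u$ relative to $T$ and to $T_i$, as above. One also has to ensure positivity, so that all intermediate values lie in $[0,\infty)$ and the map $x\mapsto\lambda/(c(1+x))$ is well defined and decreasing there. Nonnegativity of $F_{T_i,n-1}$ on nonnegative inputs follows from the same induction, since each stage applies a map of the form $\lambda/\prod(1+\cdot)$ to nonnegative numbers.

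Alternatively, by Lemma~\ref{lem:decomposing F into a composition of mobius transformations}, fixing all coordinates but $u$ writes $F_{T,n;u}$ as a composition of $n$ maps $f_{\lambda_i}$. Each $f_{\lambda_i}$ is decreasing on $[0,\infty)$ and maps $[0,\infty)$ into $[0,\infty)$, so the composition is monotone with sign $(-1)^n$. This gives the same conclusion directly.
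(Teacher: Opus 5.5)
Your proof is correct. The closing alternative is exactly the paper's argument: fix all but one coordinate, write $F_{T,n;u}=f_{\lambda_n}\circ\cdots\circ f_{\lambda_1}$ via Lemma~\ref{lem:decomposing F into a composition of mobius transformations}, and use that each $f_{\lambda_i}$ maps $[0,\infty)$ into itself with negative derivative, so the chain rule gives sign $(-1)^n$. Your main induction on the recursion~\eqref{eq:recursion ratio tree} is the same sign-flipping mechanism applied one level at a time, essentially re-running the proof of that decomposition lemma, so the two routes are equivalent.
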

\begin{proof}
    It suffices to prove this for $F_{T,n;u}$ for any $u\in S^*_T(v,n)$ and any fixed inputs $r_{u'}$ for the remaining $u'\in S^*_T(v,n)$.
    By Lemma~\ref{lem:decomposing F into a composition of mobius transformations} we have $F_{T,n;u}=f_{\lambda_n}\circ  \cdots f_{\lambda_1}$ for certain $\lambda_i>0$.
    Now since $f_{\lambda_i}^\prime(z)=\frac{-\lambda_i}{(1+z)^2}<0$ for all $z\geq 0$, and since $f_{\lambda_i}$ maps $\mathbb{R}_{\geq 0}$ into $\mathbb{R}_{\geq 0}$, it follows by the chain rule that 
    \[
    (-1)^n\frac{d}{dz} f_{\lambda_n}\circ \cdots \circ f_{\lambda_1}(z)>0.
    \]
    This finishes the proof.
\end{proof}

Let $(T,v)$ be a rooted tree. Let $n\in \mathbb{N}$ and recall that $S^*_{T}(v,n)$ denotes the collection of those vertices at distance $n$ from $v$ that are not leaves. 

The following is a direct consequence of the previous lemma.
\begin{cor}\label{cor:vssm =ssm for trees}
    Let $\lambda>0$.
    For any rooted tree $(T,v)$ and any $r=(r_u)\in [0,\lambda]^{S^*_T(v,n)}$ we have 
    \begin{eqnarray}
        {F}_{T,n}({\Vec{0}})\leq {F}_{T,n}(r_u) \leq {F}_{T,n}({\Vec{\lambda}}) & \text{ if $n$ is even, and} \label{eq:sandwhich even odd}
        \\
        {F}_{T,n}({\Vec{\lambda}})\leq {F}_{T,n}(r_u) \leq {F}_{T,n}({\Vec{0}})  &\text{ if $n$ is odd.}\label{eq:sandwhich even}
    \end{eqnarray}
\end{cor}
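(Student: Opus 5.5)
The plan is to derive the corollary directly from the coordinatewise monotonicity in Lemma~\ref{lem:monotone behaviour of F}, by moving from the vector $\Vec 0$ to $r=(r_u)$, and then from $r$ to $\Vec\lambda$, one coordinate at a time.

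First we note that $F_{T,n}$ is well defined and real valued on the whole box $[0,\lambda]^{S^*_T(v,n)}$. It is a composition of maps of the form $\lambda/\prod_i(1+x_i)$. When the inputs are nonnegative reals, each intermediate ratio is a nonnegative real number and no denominator vanishes. So the restriction of $F_{T,n}$ to the box is a real function. By Lemma~\ref{lem:decomposing F into a composition of mobius transformations}, each single-variable section $F_{T,n;u}$ is a composition $f_{\lambda_n}\circ\cdots\circ f_{\lambda_1}$ with $\lambda_i>0$. The proof of Lemma~\ref{lem:monotone behaviour of F} then shows that this composition is increasing on $[0,\lambda]$ if $n$ is even and decreasing if $n$ is odd. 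This holds for every choice of the remaining coordinates in $[0,\lambda]$, and that uniformity is what the argument needs.

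Next, enumerate $S^*_T(v,n)=\{u_1,\dots,u_k\}$. For $0\le j\le k$, let $r^{(j)}$ be the vector whose coordinates $u_1,\dots,u_j$ agree with $r$ and whose remaining coordinates equal $0$. Then $r^{(0)}=\Vec 0$ and $r^{(k)}=r$. Consecutive vectors $r^{(j-1)}$ and $r^{(j)}$ differ only in coordinate $u_j$, where the value $0$ is replaced by $r_{u_j}\ge 0$. All other coordinates stay in $[0,\lambda]$. By the monotonicity above, $F_{T,n}(r^{(j-1)})\le F_{T,n}(r^{(j)})$ when $n$ is even, and the reverse inequality holds when $n$ is odd. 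Chaining these $k$ inequalities compares $F_{T,n}(\Vec 0)$ with $F_{T,n}(r)$.

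The same chain, now replacing the coordinates of $r$ one by one with $\lambda\ge r_{u_j}$, compares $F_{T,n}(r)$ with $F_{T,n}(\Vec\lambda)$. Together the two chains give \eqref{eq:sandwhich even odd} and \eqref{eq:sandwhich even}.

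The only point needing care is that monotonicity holds on all of $[0,\lambda]$ rather than just at interior points. This follows because $f'_{\lambda_i}(z)<0$ for all $z\ge 0$ and each $f_{\lambda_i}$ maps $\mathbb R_{\ge0}$ into $\mathbb R_{\ge0}$. The intermediate values of the composition therefore stay in the region where the sign of the derivative is controlled. Beyond this check there is no real obstacle.
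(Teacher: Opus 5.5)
Your proof is correct and takes the same approach as the paper, which presents this corollary as a direct consequence of the coordinatewise monotonicity in Lemma~\ref{lem:monotone behaviour of F}. Your coordinate-by-coordinate chains from $\Vec{0}$ to $r$ and from $r$ to $\Vec{\lambda}$, with all intermediate vectors staying in $[0,\lambda]^{S^*_T(v,n)}$, simply make that deduction explicit.
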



Fix a tree $T$ with vertex $v$ and let $n$ be an positive integer.
Identify $S^*_T(v,n)$ with $[\ell]:=\{1,\ldots,\ell\}$.
Let for $i=0,\ldots,\ell$ and $\lambda>0$ the vector $\delta^i\in \{0,\lambda\}^\ell$ be defined by $\delta^i_j=0$ if $j\leq i$ and $\delta^i_j=\lambda$ if $j>i$.
Let for $i=1,\ldots,\ell$ 
\begin{equation}
    \epsilon_i(T,n):={F}_{T,n}(\delta^i)-{F}_{T,n}(\delta^{i-1})
\end{equation}
and note that by Lemma~\ref{lem:monotone behaviour of F} $\epsilon_i(T,n)$ is positive for all $i=1,\ldots,\ell$ if $n$ is even and negative for all $i=1,\ldots,\ell$ if $n$ is odd.
Observe that by construction we have
\begin{equation}\label{eq:eq:decompose SSM using telescoping}
    R_{(T,\sigma_{n+1,0}),v}(\lambda)-R_{(T,\sigma_{n+1,1}),v,}(\lambda)={F}_{T,n}({\Vec{\lambda}})-{F}_{T,n}({\Vec{0}})=\sum_{i=1}^\ell \epsilon_i(T,n),
\end{equation}
where $\sigma_{n+1,0}$ (resp. $\sigma_{n+1,1}$) denotes the boundary condition on the vertices at distance $n+1$ from $v$ in $T$ assigning all vertices to $0$ (resp. $1$).

The following proposition tells us that the functions $F_{T,n;i}$ for $i\in S^*_T(v,n)$ defined before Lemma~\ref{lem:decomposing F into a composition of mobius transformations} contract with rate a constant times $|\epsilon_i(T,n)|$, provided $T$ is the self-avoiding walk tree of a graph $G$ contained in a family satisfying VSSM at $\lambda$.

\begin{prop}\label{prop:point to point contraction  wrp to sup norm}
    Let $\lambda>0$ and let $\Delta\in \mathbb{N}$.
    Let $\mathcal{G}$ be a family of graphs of maximum degree at most $\Delta$ that is closed under taking induced subgraphs and that satisfies VSSM with rate $\alpha\in [0,1)$ and constant $C'>0$ for the hard-core model at fugacity $\lambda$. 
    Let $\mathcal{T}$ denote the family of self avoiding walk trees of graphs in $\mathcal{G}$.
    Let $K\subset B_{\lambda/2}(\lambda/2)$ be a compact set containing a neighbourhood of $[\ell_{\Delta}(\lambda),r_{\Delta}(\lambda)]$.
    Then there exists a constant $C>0$ such that for any $(T,v)\in \mathcal{T}$, $n\in \mathbb{N}$, $u\in S^*_T(v,n)$, and any vector $(r_{u'})\in [\ell_\Delta(\lambda),\lambda]^{S^*_T(v,n)\setminus \{u\}}$ we have
    \[
    \norm{F'_{T,n;u}}_{K}\leq C|\epsilon_u(T,n)|.
    \]
\end{prop}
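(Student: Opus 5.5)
The plan is to reduce the derivative bound to a comparison of two one-dimensional non-autonomous Möbius compositions, and to use the conjugation to affine maps from Section~\ref{sec:dynamics} to compare them.

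\emph{Step 1 (reduction to a difference of values).} Fix $(T,v)$, $n$, $u$ and the vector $(r_{u'})$. By Lemma~\ref{lem:decomposing F into a composition of mobius transformations} we have $F_{T,n;u}=f_{\lambda_n}\circ\cdots\circ f_{\lambda_1}$ with all $\lambda_i\in[\ell_\Delta(\lambda),\lambda]$. Corollary~\ref{cor:derivative bound F} then gives
\[
\norm{F_{T,n;u}'}_K\le C_1\,\abs{F_{T,n;u}(0)-F_{T,n;u}(\lambda)}.
\]
Now $\epsilon_u(T,n)$ is exactly $F_{T,n;u}(0)-F_{T,n;u}(\lambda)$ (up to sign), but with the other coordinates fixed to the specific values $\delta^{u-1}$, which are $0$ for indices before $u$ and $\lambda$ after. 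Write the corresponding decomposition as $\widehat F=f_{\widehat\lambda_n}\circ\cdots\circ f_{\widehat\lambda_1}$. Two remarks apply here. First, coordinates equal to $0$ lie outside $[\ell_\Delta(\lambda),\lambda]$, but the lower bound in the lemma still holds, since it only uses that the inner ratios are at most $\lambda$. Second, for a general vector $r$ the differences $\abs{F(0)-F(\lambda)}$ and $\abs{\widehat F(0)-\widehat F(\lambda)}$ are different quantities. So it remains to show that they are comparable up to a uniform multiplicative constant.

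\emph{Step 2 (parameters are exponentially close, via VSSM).} The parameter $\lambda_i$ is the ratio of $(T_{n,i},w_i)$, and the varying inputs sit at depth $i$ below $w_i$. By the second part of Lemma~\ref{lem:ratio TSAW is ratio G}, the subtree of $T$ hanging at $w_i$ is the self avoiding walk tree of an induced subgraph. Removing the branch through $w_{i-1}$ should again give a self avoiding walk tree of an induced subgraph, namely one with that neighbour deleted. Checking this against the recursive Definition~\ref{def:TSAW} needs some care; I expect that the first child may need to be ordered first.

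By Corollary~\ref{cor:vssm =ssm for trees}, $\lambda_i$ and $\widehat\lambda_i$ are both sandwiched between the values obtained from the all-$0$ and all-$\lambda$ inputs at depth $i$. These correspond to the boundary conditions ``all in'' and ``all out'' at depth $i+1$. VSSM therefore yields $\abs{\lambda_i-\widehat\lambda_i}\le C'\alpha^{i}$ with constants independent of $T$, $n$ and $u$.

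\emph{Step 3 (comparison through the affine conjugation).} Let $(w_i)$ and $(\widehat w_i)$ be the orbits from Lemma~\ref{lem:construction of w_n}, which lie in $[-\lambda-1,-1]$, and let $G_n=\phi_n\circ F\circ\phi_0^{-1}$ be the associated affine map, and similarly $\widehat G_n$. Since $G_n$ is affine,
\[
\phi_n(F(0))-\phi_n(F(\lambda))=G_n'\cdot\bigl(\phi_0(0)-\phi_0(\lambda)\bigr).
\]
The points $F(0),F(\lambda)$ lie in $[0,\lambda]$, and the poles $w_n,w_0$ lie in $[-\lambda-1,-1]$, at distance at least $1$. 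Hence $\phi_n$ is bi-Lipschitz on $[0,\lambda]$ with constants uniform in $w_n$, and $\abs{\phi_0(0)-\phi_0(\lambda)}$ is bounded above and below uniformly. This gives
\[
\abs{F(0)-F(\lambda)}\asymp\abs{G_n'},
\]
and likewise for the hatted quantities. Corollary~\ref{cor:derivative bound on ratio of G_n} with $\lambda_-=\ell_\Delta(\lambda)$ and $\lambda_+=\lambda$, fed by Step 2, gives $\abs{G_n'}\le A\abs{\widehat G_n'}$. Chaining the estimates yields
\[
\abs{F(0)-F(\lambda)}\le C_2\,\abs{\epsilon_u(T,n)},
\]
and combining with Step 1 proves the proposition.

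The main obstacle is Step 2. One must correctly identify $T_{n,i}$ as a self avoiding walk tree of an induced subgraph, so that VSSM applies to it, and get the depth indexing right so that the bound is $\alpha^{i}$ rather than $\alpha^{n-i}$. The rest is bookkeeping with the constants from Section~\ref{sec:dynamics}.
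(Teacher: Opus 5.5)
\emph{Steps 1 and 3.} These are correct. Step 3 is a slightly more elementary chain than the paper's. You compare $\abs{F(0)-F(\lambda)}$ with $\abs{G_n'}$ through the exact identity $\phi_n\circ F=G_n\circ\phi_0$ and the uniform bi-Lipschitz property of $\phi_0,\phi_n$ on $[0,\lambda]$, and you apply Corollary~\ref{cor:derivative bound F} to the $r$-configuration. The paper instead applies that corollary to the reference configuration $\delta^{u-1}$ and transfers sup-norms of derivatives with Corollary~\ref{cor:compare Derivative}. You should extend both finite parameter sequences by $\lambda$ beyond index $n$ before invoking Lemma~\ref{lem:construction of w_n} and Corollary~\ref{cor:derivative bound on ratio of G_n}.

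\emph{The gap is Step 2, and the fix you anticipate is not available.} VSSM applies only to self avoiding walk trees of induced subgraphs. The pruned tree at $w_i$, meaning the subtree of $T$ at $w_i$ with the branch through $w_{i-1}$ removed, is in general not one. You also cannot arrange $w_{i-1}$ to be the first child, since $T$, the vertex ordering and the path from $u$ to $v$ are all given.

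Concretely, suppose the subtree at $w_i$ is $T_{\mathrm{SAW}(H,w_i)}$, the neighbours of $w_i$ in $H$ are $x_1<\dots<x_d$, and $w_{i-1}=x_m$. For $k<m$ the branch at $x_k$ is $T_{\mathrm{SAW}(H-\{w_i,x_1,\dots,x_{k-1}\},x_k)}$, which is built in a graph still containing $x_m$. So the pruned tree differs from $T_{\mathrm{SAW}(H-x_m,w_i)}$ as soon as $x_k$ and $x_m$ are connected in $H-\{w_i,x_1,\dots,x_{k-1}\}$.

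For example, take $C_4$ with vertices $w,a,c,b$ in cyclic order and $a<b$, in the case $v=w$, $n=i=1$, $u=b$. Removing the $b$-branch of $T_{\mathrm{SAW}(C_4,w)}$ leaves a path on four vertices rooted at an end. This is not the self avoiding walk tree of any induced subgraph of $C_4$.

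\emph{The repair, which is what the paper does, is to apply VSSM one level lower.} The children $u_1,\dots,u_{d'}$ of $w_i$ off the path root full subtrees $S_1,\dots,S_{d'}$ of $T$. These are self avoiding walk trees of induced subgraphs by the second part of Lemma~\ref{lem:ratio TSAW is ratio G}. The boundary at depth $n+1$ from $v$ lies at distance $i$ from each $u_k$. So the sandwich of Corollary~\ref{cor:vssm =ssm for trees} inside each $S_k$, together with VSSM at $\ell=i$, shows that the ratios at $u_k$ computed from the inputs $r$ and $\delta^{u-1}$ differ by at most $C'\alpha^{i}$.

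Now $\lambda_i$ and $\widehat\lambda_i$ are obtained from these ratios by $(x_k)_k\mapsto\lambda/\prod_k(1+x_k)$. This map is $\lambda$-Lipschitz in each coordinate on $[0,\infty)^{d'}$, and $d'\le\Delta-1$. Hence $\abs{\lambda_i-\widehat\lambda_i}\le(\Delta-1)\lambda C'\alpha^{i}$, uniformly in $T$, $n$ and $u$. With this replacement the rest of your argument goes through.
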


\begin{proof}
    Let us fix $(T,v)=(T_{\mathrm{SAW}(G,v)},v)\in \mathcal{T}$, an integer $n$, any $u\in S^*_T(v,n)$ and a vector $r\in [\ell_\Delta(\lambda),\lambda]^{S^*_T(v,n)\setminus \{u\}}$.
    After identifying $S^*_{T}(v,n)$ with $[\ell]$ we may assume that $u\in [\ell]$. 
    We will write $\epsilon=|\epsilon_u(T,n)|.$
    
    Let $\lambda_j$, for $j=1,\ldots,n$ be obtained from Lemma~\ref{lem:decomposing F into a composition of mobius transformations} from the input vector $\delta_u$ as defined right after Corollary~\ref{cor:vssm =ssm for trees}. Let $\tilde{\lambda}_j$ for $j=1,\ldots,n$ be obtained from the input vector $r$.
    By setting $\lambda_{n+i}=\tilde{\lambda}_{n+i}=\lambda$ for $i\geq 1$ we can of course turn these into infinite sequences.
    We write $f_j=f_{\lambda_j}$ and $\tilde{f}_{j}=f_{\tilde\lambda_j}$.
    Let $(w_j)$ (resp.) ($\tilde{w}_j)$ be the associated sequences obtained from Lemma~\ref{lem:construction of w_n}.
    Finally, let $(g_j)$ (resp. ($\tilde{g}_j$) be the corresponding sequences of affine maps.
    
    We first claim that for all $j\geq 1$ we have
    \begin{equation}
        |\lambda_j-\tilde{\lambda}_j|\leq C\alpha^j.     \label{eq:cliam exp close}
    \end{equation}
    
    It suffices to prove~\eqref{eq:cliam exp close} for $j\leq n$.
    By Lemma~\ref{lem:decomposing F into a composition of mobius transformations} we know that 
    \[
    \lambda_j={F}_{{{T}_{n,j},w_j}}(\delta_u),
    \]
    where $w_j$ is the vertex on the unique path $P$ from $v$ to $u$ in $T$ at distance $n-j$ from $v$ and where ${T}_{n,j}$ is the component of $T\setminus (P-w_j)$ containing the vertex $w_j$ (see Figure~\ref{fig:TreeComps}). 
    By Lemma~\ref{lem:monotone behaviour of F} it thus follows that $\lambda_j$ is sandwiched between the values ${F}_{{T}_{n,j},w_j}(\Vec{0})$ and ${F}_{{T}_{n,j},w_j}(\Vec{\lambda})$.
    Similarly we have 
    \[
    \tilde{\lambda}_j={F}_{{{T}_{n,j},w_j}}(r),
    \]
    and since $r_i\in [0,\lambda]$ for all $i\in [\ell]$, it follows from Lemma~\ref{lem:monotone behaviour of F} that also $\tilde{\lambda}_j$ is sandwiched between the values ${F}_{{T}_{n,j},w_j}(\Vec{0})$ and ${F}_{{T}_{n,j},w_j}(\Vec{\lambda})$.
    
    Fix $j$ and denote the neighbours of $w_j$ that are at distance $n-j+1$ from $v$, but are not on the path from $u$ to $v$ by $u_1,\ldots,u_{d'}$ and denote by $S_1,\ldots,S_{d'}$ the rooted trees containing $u_i$ and all vertices at distance larger than $n-j+1$ whose unique path to $v$ goes through $u_i.$
    Lemma~\ref{lem:ratio TSAW is ratio G} tells us that $(S_i,u_i)$ is the self avoiding walk tree of $(H_i,u_i)$ where $H_i$ is an induced subgraph of $G$.
    Since the family $\mathcal{G}$ satisfies VSSM at $\lambda$, there exists $C'>0$ and $\alpha\in (0,1)$ such that
    \[
    |R_{(S_i,\sigma_{j,0}),u_i}(\lambda)-R_{(S_i,\sigma_{j,1}),u_i}(\lambda)|\leq C'\alpha^{j}.
    \]
    Now by the construction of ${F}_{T,n}$ we have
    \begin{align*}
        {F}_{{T}_{n,j},w_j}(\Vec{0})=\frac{\lambda}{\prod_{i=1}^{d'} (1+R_{(S_i,\sigma_{j,1}),u_i}(\lambda))},
    \end{align*}
    and
    \begin{align*}
        {F}_{{T}_{n,j},w_j}(\Vec{\lambda})=\frac{\lambda}{\prod_{i=1}^{d'} (1+R_{(S_i,\sigma_{j,0}),u_i}(\lambda))},
    \end{align*}
    and thus there exists a constant $C>0$ only depending on $\lambda,C'$ and $d$ such that 
    \[
    |{F}_{{T}_{n,j},w_j}(\Vec{0})-{F}_{{T}_{n,j},w_j}(\Vec{\lambda})|\leq C\alpha^{j}.
    \]
    This proves~\eqref{eq:cliam exp close}.
    
    Let us next write $G_n=g_n\circ \cdots \circ g_1$, $\tilde{G}_n=\tilde{g}_n\circ \cdots \tilde{g}_1$ and $F_n=f_n\circ \cdots \circ f_1$ and $\tilde{F}_n=\tilde{f}_n\circ \cdots \tilde{f}_1$.
    Observe that with this notation we have
    $\tilde F_n(\,\cdot\,)=F_{T,n}(\,\cdot\,,r)$ and $F_n(\,\cdot\,)=F_{T,n}(\,\cdot\,,\delta_u)$.
    We will transfer knowledge about $\norm{F'_n}_{K}$ to $\norm{\tilde F'_n}_{K}$ using $G_n$ and $\tilde G_n$.
    
    Let $A>1$ be the constant from Corollary~\ref{cor:derivative bound on ratio of G_n} on input of $\lambda_+=\lambda$, $\lambda_-=\ell_\Delta(\lambda)$, $C$ and $\alpha$. Then we know that 
    \begin{equation}\label{eq:derivative bound on ratio of G_n}
        1/A\leq \abs[\bigg]{\frac{G_n^\prime}{\tilde{G}_n^\prime}} \leq A.
    \end{equation}

    Let $C^{(1)}$ be the constant from Corollary~\ref{cor:derivative bound F}.
    Then 
    \[
    \norm{F_n'}_{K}\leq C^{(1)}\epsilon.
    \]
    By Corollary~\ref{cor:compare Derivative} we thus have that 
    \[
    \abs{G'_n}=\norm{G'_n}_{\phi_0(K)}\leq C^{(2)}\epsilon.
    \]
    Consequently by~\eqref{eq:derivative bound on ratio of G_n} and
    another application of Corollary~\ref{cor:compare Derivative} we obtain 
    \[
    \norm{\tilde F_n'}_{K}\leq C^{(3)}\epsilon,
    \]
    where $C^{(3)}$ is equal to the constant from  Corollary~\ref{cor:compare Derivative} multiplied by $AC^{(2)}$.
    Since the constant $C^{(3)}$ only depends on $\lambda,C',\alpha$ and $\Delta$, this finishes the proof.
\end{proof}

We next use the previous proposition to show that small perturbations of the functions $F_{T,n}$ have no big effect on the value of the functions.
To this end let us denote for a rooted tree $(T,v)$ and $\Vec{\lambda}=(\lambda_u)_{u\in V(T)}$ the map $F_{T,n,\Vec{\lambda}}$ for the map obtained from $F_{T,n}$ by replacing in each step of the recursion~\eqref{eq:recursion ratio tree} at some root vertex $u$ the value of $\lambda$ by $\lambda_u$.

\begin{prop}\label{prop:contraction  wrp to sup norm}
    Let $\lambda>0$ and $\Delta\in\mathbb{N}$.
    Let $\mathcal{G}$ be a family of graphs of maximum degree at most $\Delta$ that is closed under taking induced subgraphs and satisfies VSSM with rate $\alpha\in [0,1)$ and constant $C'>0$ for the hard-core model at fugacity $\lambda$. 
    Let $\mathcal{T}$ denote the family of self avoiding walk trees of graphs in $\mathcal{G}$.
    Then there exists $n\in \mathbb{N}$ and $\eta_0>0$ such that for each $0<\eta_1<\eta_0$ there is an $\eta_2>0$ such that for any $(T,v)\in \mathcal{T}$, $r\in [\ell_\Delta(\lambda),r_\Delta(\lambda)]^{S^*_T(v,n)}$, $r'\in B_{\ell_\infty}(r,\eta_1)$ and  $\vec{\lambda}=(\lambda_u)_{u\in V(T)}$ with $\lambda_u\in B(\lambda,\eta_2)$ for each $u$ we have
    \begin{equation}\label{eq:goal}
        | {F}_{T,n,\Vec{\lambda}}(r') -{F}_{T,n}(r)|\leq \eta_1.
    \end{equation}
\end{prop}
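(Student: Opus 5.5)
The plan is to split the difference into a part coming from changing the input and a part coming from changing the fugacities:
\[
|F_{T,n,\vec\lambda}(r')-F_{T,n}(r)|\le |F_{T,n}(r')-F_{T,n}(r)|+|F_{T,n,\vec\lambda}(r')-F_{T,n}(r')|.
\]
The first term is controlled by the contraction from Proposition~\ref{prop:point to point contraction  wrp to sup norm}. The second is controlled by continuity: it is a small perturbation of a fixed-depth rational function on a compact set, uniformly over trees of bounded degree.

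\textbf{Step 1 (choice of $n$ and $\eta_0$).} Fix a compact $K\subset B_{\lambda/2}(\lambda/2)$ containing a closed $2\eta_0$-neighbourhood of $[\ell_\Delta(\lambda),r_\Delta(\lambda)]$. Let $C$ be the constant of Proposition~\ref{prop:point to point contraction  wrp to sup norm}. By \eqref{eq:eq:decompose SSM using telescoping} and the sign information from Lemma~\ref{lem:monotone behaviour of F}, all $\epsilon_i(T,n)$ have the same sign, so
\[
\sum_i|\epsilon_i(T,n)|=|F_{T,n}(\vec\lambda)-F_{T,n}(\vec0)|.
\]
This equals the difference between the all-out and all-in boundary conditions at distance $n+1$. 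Hence by VSSM it is at most $C'\alpha^{n+1}$. Choose $n$ so large that $C\,C'\alpha^{n+1}\le 1/4$.

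\textbf{Step 2 (contraction in the inputs).} Change the coordinates of $r$ to those of $r'$ one at a time, interpolating through the vectors $r^{(i)}$ whose first $i$ coordinates equal $r'$ and whose remaining coordinates equal $r$. At each step only coordinate $i$ moves, along a segment inside $B(r_i,\eta_1)\subset K$. This gives
\[
|F_{T,n}(r')-F_{T,n}(r)|\le\sum_i\norm{F'_{T,n;i}}_K\,\eta_1\le C\eta_1\sum_i|\epsilon_i|\le \eta_1/4 .
\]

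\textbf{Main obstacle.} Proposition~\ref{prop:point to point contraction  wrp to sup norm} only assumes the other coordinates lie in $[\ell_\Delta(\lambda),\lambda]$, but here they are complex, namely coordinates of $r'$. I would try two fixes. The first is to re-examine its proof: only the $\lambda_j$ built from the other coordinates enter, and under complex perturbation these become complex numbers within $O(\eta_1)$ of real sequences in $[\ell_\Delta(\lambda),\lambda]$. The first fix would therefore need a version of the $w_n$-conjugation and Corollary~\ref{cor:derivative bound on ratio of G_n} robust to that. The second, simpler fix is to bound the derivative at complex parameters by its value at the real projection. Concretely, replace each complex coordinate by its real part, with error controlled by a second-order Taylor/Cauchy estimate on the polydisc around $[\ell_\Delta,r_\Delta]^{\ell}$. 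This costs $O(\eta_1^2)$ per coordinate, times a number of coordinates bounded by $\Delta^n$ with $n$ fixed. It is absorbed by shrinking $\eta_0$, since $\Delta^n\cdot O(\eta_1^2)\le \eta_1/4$ for $\eta_1<\eta_0$ small.

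\textbf{Step 3 (perturbing fugacities).} With $n$ fixed, $F_{T,n,\vec\lambda}$ is a composition of at most $n$ layers of maps $(z_1,\dots,z_d)\mapsto\lambda_u/\prod(1+z_i)$ with $d\le\Delta$. Its inputs lie in a fixed compact set avoiding $-1$: they stay in a neighbourhood of $[0,\lambda]$, by induction on layers, provided $\eta_0,\eta_2$ are small. On such compact sets these maps are uniformly continuous, uniformly in the tree, since only finitely many shapes of depth-$n$ trees of degree $\le\Delta$ occur. Hence there is $\eta_2$ with
\[
|F_{T,n,\vec\lambda}(r')-F_{T,n}(r')|\le\eta_1/2 .
\]
Summing the bounds from Steps 2 and 3 gives \eqref{eq:goal}.
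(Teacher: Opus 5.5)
Your proposal is correct and is essentially the paper's proof. It uses the same triangle-inequality split, the same choice of $n$ (same-sign telescoping sum plus VSSM, giving $\|\nabla F_{T,n}\|_1\le 1/4$ on the real box), and the same continuity-plus-finiteness argument for the fugacity perturbation.

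The paper handles your ``main obstacle'' with a soft version of your second fix. Since only finitely many maps $F_{T,n}$ occur for fixed $n$, continuity of $\nabla F_{T,n}$ upgrades the bound to $\|\nabla F_{T,n}\|_1\le 1/2$ on a complex $\eta_0$-neighbourhood of the box, so the point-to-point proposition is only ever applied at real points. Your Cauchy estimate simply quantifies this. When replacing the complex coordinates, compare with $r_j$ rather than $\mathrm{Re}\,r'_j$, so that the hypothesis $r_j\in[\ell_\Delta(\lambda),\lambda]$ is met.
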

\begin{proof}
    Choose $n$ large enough so that, with the constant $C$ from Proposition~\ref{prop:point to point contraction  wrp to sup norm} above, upon input of $\lambda$ and a compact set $K$ containing a neighbourhood of $[\ell_\Delta(\lambda),r_\Delta(\lambda)]$ and $\alpha,C'$ we have $CC'\alpha^n\leq 1/4$.
    Since there are only finitely many rooted trees of maximum degree at most $\Delta$ and depth $n+1$, it follows that there exists only finitely many associated functions $F_{T,n}$.
    
    Fix any rooted tree $(T,v)\in \mathcal{T}$.
    Proposition~\ref{prop:point to point contraction  wrp to sup norm} implies that for each $u\in S^*_T(v,n)$ and $r\in [\ell_\Delta(\lambda),\lambda]^{S^*_T(v,n)\setminus\{u\}}$ the absolute value of the derivative of $F_{T,n;u}$ is bounded by $C|\epsilon_u(T,n)|$ on a neighbourhood of $[\ell_\Delta(\lambda),r_\Delta(\lambda)]$.
    This implies that the $\ell_1$-norm of the gradient, $\|\nabla F_{T,n}(r)\|_1$, is bounded by 
    \begin{align*}
        \|\nabla F_{T,n}(r)\|_1 & =\sum_{u\in S^*_T(v,n)}\abs[\bigg]{\frac{\partial}{\partial r_u} F_{T,n}(r)}=\sum_{u\in S^*_T(v,n)}|F^\prime_{T,n;u}(r_u)|
        \\
        &\leq \sum_{u\in S^*_T(v,n)}C|\epsilon_u(T,n)|
    \end{align*}
    for all $r\in [\ell_\Delta(\lambda),r_\Delta(\lambda)]^{S^*_T(v,n)}$.
    Thus from \eqref{eq:eq:decompose SSM using telescoping}, the fact that all $\epsilon_u(T,n)$ have the same sign, the assumption of VSSM, and our choice of $n$, it follows that 
    \[
    \|\nabla F_{T,n}(r)\|_1\leq CC'\alpha^n\leq 1/4,
    \]
    for any $r\in [\ell_\Delta(\lambda),r_\Delta(\lambda)]^{S^*_T(v,n)}$ and all $(T,v)\in \mathcal{T}.$
    
    By continuity of the map $r\mapsto \nabla F_{T,n}(r)$ and the fact there exists only finitely many maps $F_{T,n}$, it follows that that there exists $\eta_1>0$ such that for all $r\in B([\ell_\Delta(\lambda),r_\Delta(\lambda)],\eta_1)^{S^*_T(v,n)}$ and all $(T,v)\in \mathcal{T}$ we have 
    \[
    \|\nabla F_{T,n}(r)\|_1\leq 1/2.
    \]
    This in turn implies that for each tree $(T,v)$ and each $r,r'\in B([\ell_\Delta(\lambda),r_\Delta(\lambda)],\eta_1)^{S^*_T(v,n)}$ we have
    \begin{equation}\label{eq:complex contraction F}
        |F_{T,n}(r)- F_{T,n}(r')| \leq 1/2 \|r-r'\|_\infty.
    \end{equation}
    
    Next we look at the perturbed maps $F_{T,n,\Vec\lambda}$.
    By continuity of $(\Vec{\lambda},r')\mapsto F_{T,n,\Vec\lambda}(r')$ and the fact that there only finitely many maps $F_{T,n}$, there exists $\eta_2>0$ such that for all trees $(T,v)\in \mathcal{T}$, $r'\in  B([\ell_\Delta(\lambda),r_\Delta(\lambda)],\eta_1)^{S^*_T(v,n)}$ and any $\Vec{\lambda}=(\lambda_u)_{u\in V(T)}\in B(\lambda,\eta_2)^{V(T)}$, we have
    \begin{equation}\label{eq:complex perturbation F}
        |F_{T,n,\Vec\lambda}(r')-F_{T,n}(r')|<\eta_1/2.
    \end{equation}
    
    Now combining~\eqref{eq:complex perturbation F} and~\eqref{eq:complex contraction F} with the triangle inequality, we obtain that for any tree $(T,v)\in \mathcal{T}$, any $r\in  [\ell_\Delta(\lambda),r_\Delta(\lambda)]^{S^*_T(v,n)}$, $r'\in B_{\ell_\infty}(r,\eta_1)$ and any  $\Vec{\lambda}=(\lambda_u)_{u\in V(T)}\in B(\lambda,\eta_2)^{V(T)}$,
    \begin{align*}
        |F_{T,n,\Vec\lambda}(r')-F_{T,n}(r)| &  \leq |F_{T,n,\Vec\lambda}(r')-F_{T,n}(r')|  +|F_{T,n}(r')-F_{T,n}(r)|
        \\
        &\leq \eta_1/2+ 1/2\|r-r'\|_\infty \leq \eta_1,
    \end{align*}
    as desired.
    %
    %
\end{proof}

We can now prove Theorem~\ref{thm:main}. 
\begin{proof}[Proof of Theorem~\ref{thm:main}]
    To prove the implication we assume that $\mathcal{G}$ is family of graphs of maximum degree at most $\Delta$, is closed under taking induced subgraphs, and satisfies VSSM at fugacity $\lambda>0$ with rate $\alpha\in [0,1)$ and constant $C>0$.  
    Let $n\in \mathbb{N}$, $\eta_1\in(0,1)$, and $\eta_2>0$ be the guaranteed constants from Proposition~\ref{prop:contraction  wrp to sup norm}.
    Consider the collection $\mathcal{T}$ of all rooted trees of maximum degree at most $\Delta$ and depth at most $n$. 
    Since $\mathcal{T}$ consists of only finitely many trees, there exists $\varepsilon\in (0,\eta_2)$ such that for each $(T,v)\in \mathcal{T}$ and any $\Vec\lambda\in B(\lambda,\varepsilon)^{V(T)}$ we have $R_{T,v}(\Vec\lambda)\in B(R_{T,v}(\lambda),\eta_1).$
    
    We will show that for all $G=(V,E)\in \mathcal{G}$ and $(\lambda_u)_{u\in V}$ such that $\lambda_u\in B(\lambda,\varepsilon)$ for all $u\in V$ we have 
    
    \begin{itemize}
        \item[(i)] $Z_G(\lambda_u)\neq 0$, and  
        \item[(ii)] $R_{G,v}(\lambda_u)\in B(R_{G,v}(\lambda),\eta_1)$ for any vertex $v\in V(G).$
    \end{itemize}
    In case $V=\emptyset$ this is trivial and thus we may assume that $|V|\geq 1$.
    We claim that it suffices to show (ii). 
    Indeed, by induction we know that $Z_{G-v}(\lambda_u)\neq 0$, and thus by~\eqref{eq:ratio -1 vs Z=0} it suffices to show that $R_{G,v}(\lambda_u)\neq -1$ for any vertex $v\in V$.
    Since $R_{G,v}(\lambda)\geq 0$ and $\eta_1<1$ it follows that (ii) implies (i).
    
    To show (ii) let $T=T_{\mathrm{SAW}(G,v)}$. 
    We will show that $R_{T,v}(\tilde\lambda_w)\in B(R_{T,v}(\lambda),\eta_1)$, which by Lemma~\ref{lem:ratio TSAW is ratio G} implies that $R_{G,v}(\lambda_u)\in B(R_{G,v}(\lambda),\eta_1)$.
    
    In case the depth of $T$ is at most $n$, we have that $R_{T,v}(\tilde\lambda_w)\in B(R_{T,v}(\lambda),\eta_1)$ by our choice of $\varepsilon$. So we may assume that the depth of $T$ is at least $n$.
    We know that
    \[
    R_{T,v}(\tilde\lambda_w)=F_{T,n,\Vec \lambda}(R_{T_u,u}(\tilde\lambda_w)),
    \]
    where for $u\in S^*_T(v,n)$, $T_u$ denotes the subtree of $T$ rooted at $u$ consisting of all vertices whose path to $v$ goes trough $u$ and where $\Vec \lambda$ denotes the vector of variables obtained from $(\tilde \lambda_w)$. Note that $T_u$ consists of at least one edge since $u\in S^*_T(v,n)$.
    By Lemma~\ref{lem:ratio TSAW is ratio G}, we know that each such subtree $T_u$ is the tree of self avoiding walks of a strict induced subgraph $H$ of $G$ and hence by induction and another application of Lemma~\ref{lem:ratio TSAW is ratio G} we obtain that  
    \[
    R_{T_u,u}(\tilde\lambda_w)=R_{H,u}(\lambda_{u'})\in B(R_{H,u}(\lambda),\eta_1)= B(R_{T_u,u}(\lambda),\eta_1)
    \]
    for each $u\in S^*_T(v,n)$. 
    Note that by Lemma~\ref{lem:decomposing F into a composition of mobius transformations} and Lemma~\ref{lem:upper bound ratio} we have $R_{T_u,u}(\lambda)\in [\ell_\Delta(\lambda),r_\Delta(\lambda)]$ and thus by Proposition~\ref{prop:contraction  wrp to sup norm} we have 
    \[
    |F_{T,n,\Vec \lambda}(R_u((\tilde \lambda_w )))-F_{T,n}(R_{T_u,u}(\lambda))| \leq \eta_1.
    \]
    Since $R_{G,v}(\lambda_u)=R_{T,v}(\tilde \lambda_w)=F_{T,n,\Vec \lambda}(R_u((\tilde \lambda_w )))$
    and $R_{G,v}(\lambda)=R_{T,v}(\lambda)=F_{T,n}(R_u(\lambda))$ by yet another application of Lemma~\ref{lem:ratio TSAW is ratio G} this shows (ii) for $(G,v)$ and finishes the proof of Theorem~\ref{thm:main}.
\end{proof}

\section{VSSM at growing distance allows zeros}\label{sec:vssm at distance}
In this section we prove Theorem~\ref{thm:VSSM at a distance and zeros}.
The family of graphs $\mathcal G$ that we consider consists of trees of the following form:

Let $d=\Delta-1 \ge 2$. For $k\in \mathbb{N}$ let $T_{d^k}$ be the rooted tree in which the root vertex has degree $d$ and is connected to $d$ identical copies of $T_{d^{k-1}}$, where $T_{d^0}$ is just a single vertex.
For $k,m\in\mathbb{N}$ let $T_{d^{k},1^{m}}$ be obtained from $T_{d^k}$ by attaching
a path (tree of down-degree 1) of depth $m$ to each leaf. 

All these trees have maximum degree $\Delta$. The zeros of the independence polynomials of trees $T_{d^{k}},k\in\mathbb{N}$ are
known to accumulate at $\lambda_{c}(\Delta)$, as shown via dynamical instability
in~\cite{PetersRegts2019OnaConjectureofSokalConcerningRootsoftheIndependencePolynomial}.
We will give a variant of this proof and show that the dynamical instability persists for the trees $T_{d^{k},1^{m}}$,
while the contracting dynamics of long paths ensures $\varphi$-VSSM.


For the rooted tree $T=T_{d^k,1^m}$, equation \eqref{eq:recursion ratio tree} simplifies to 
\[
R_{T,v}(\lambda)=\frac{\lambda}{(1+R_{T',v'})^{d}},
\]
where $v'$ is a neighbour of $v$ in $T$ and $T'$ is the component
of $T-v$ with root $v'$. In other words, $R_{T,v}$ is the image
of $R_{T',v'}$ under the rational map 
\[
f_{d,\lambda}:\mathbb{C}\to\mathbb{C},\text{ defined by } R\mapsto \frac{\lambda}{(1+R)^{d}}.
\]
By recursion, this gives us a representation of the ratio of $T_{d^{k},1^{m}}$
at the root $v_{0}$ as the composition: 
\begin{equation}
    R_{T_{d^{k},1^{m}},v_{0}}(\lambda)=f_{d,\lambda}^{\circ k}\circ f_{1,\lambda}^{\circ m}(0).\label{eq:TreeRatioAsComposition}
\end{equation}
We can thus study these ratios via the dynamical behavior of $f_{1,\lambda}$ and
$f_{d,\lambda}$.
The next subsection contains a summary of useful properties of these maps.

\subsection{Dynamics of \texorpdfstring{$f_{d,\lambda}$}{}}

\label{subsec:treeDynamics} Let us first fix some language and notation:
A rational map $f$ with real or complex coefficients defines a holomorphic
map from the extended complex plane $\widehat{\mathbb{C}}$ to itself.
A fixed point $x=f(x)$ of $f$ is attracting if $|f'(x)|<1$, repelling
if $|f'(x)|>1$, and parabolic if $f'(x)$ is a root of unity. The
basin of a fixed point $x$ of $f$ is
\[
B_{f}(x):=\{y\in\widehat{\mathbb{C}}\mid\lim_{n\to\infty}f^{n}(y)=x\}.
\]
If $x$ is repelling, there can be no orbit converging to $x$ non-trivially,
i.e. that is not equal to $x$ after finitely many iterates.

The dynamics of $f_{d,\lambda}$ have been studied in \cite{PetersRegts2019OnaConjectureofSokalConcerningRootsoftheIndependencePolynomial},
where the first two authors show in particular:
\begin{lem}
    Let $d\in\mathbb{N}$. Then:
    \begin{enumerate}
        \item For $\lambda>0$, $f_{d,\lambda}$ has a fixed point $x_{d}(\lambda)\in(0,\lambda)$ and $x_{d}(\lambda)$ is attracting for $\lambda<\lambda_{c}(d+1)$,
        parabolic for $\lambda=\lambda_{c}(d+1)$, and repelling for $\lambda>\lambda_{c}(d+1)$.
        \item The fixed point $x_{d}(\lambda)$ persists for $\lambda$ in a complex
        neighbourhood of $[0,\infty)$ and depends holomorphically on $\lambda$.
    \end{enumerate}
\end{lem}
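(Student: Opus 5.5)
The plan is to analyse the fixed-point equation $x = \lambda/(1+x)^d$ directly, starting from its real structure and then passing to complex $\lambda$ via the implicit function theorem.

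\emph{Part (1).} For $\lambda>0$, write $h(x)=x(1+x)^d-\lambda$. On $[0,\infty)$ this is strictly increasing, with $h(0)=-\lambda<0$ and $h(\lambda)=\lambda((1+\lambda)^d-1)>0$. Hence there is a unique fixed point $x_d(\lambda)\in(0,\lambda)$.

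At this fixed point, $f_{d,\lambda}'(x)=-d\lambda/(1+x)^{d+1}=-dx/(1+x)$, using $\lambda=x(1+x)^d$. The map $x\mapsto dx/(1+x)$ is strictly increasing on $[0,\infty)$ and equals $1$ exactly at $x^\ast=1/(d-1)$ when $d\ge2$. For $d=1$ it stays below $1$ for all $x>0$, and in that case $\lambda_c(2)=\infty$, so the fixed point is always attracting, consistent with the statement.

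Since $\lambda\mapsto x_d(\lambda)$ is increasing (it is the inverse of the increasing map $x\mapsto x(1+x)^d$), the multiplier has modulus $<1$, $=1$, or $>1$ according as $\lambda<$, $=$, or $>$ the value
\[
x^\ast(1+x^\ast)^d=\frac{1}{d-1}\Big(\frac{d}{d-1}\Big)^d=\frac{d^d}{(d-1)^{d+1}}=\lambda_c(d+1).
\]
At criticality the multiplier is $-1$, which is a root of unity, so the fixed point is parabolic there.

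\emph{Part (2).} Apply the holomorphic implicit function theorem to $H(\lambda,x)=x(1+x)^d-\lambda$. We have $\partial_x H=(1+x)^{d-1}(1+(d+1)x)$, which is nonzero at every $(\lambda,x_d(\lambda))$ with $\lambda\ge0$, because $x_d(\lambda)\ge0$. This gives, near each $\lambda_0\in[0,\infty)$, a unique holomorphic local solution extending $x_d$.

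These local solutions glue to a single holomorphic function on a neighbourhood of $[0,\infty)$. The cleanest way to see this is to note that $x\mapsto x(1+x)^d$ is locally biholomorphic near $[0,\infty)$, maps $[0,\infty)$ bijectively onto itself, and is injective on a thin neighbourhood of $[0,\infty)$. Injectivity on such a neighbourhood can be checked on compact pieces, and for large $|x|$ it follows from the map behaving like $x^{d+1}$ in a narrow sector around the positive axis. The inverse of this map then defines $x_d(\lambda)$ holomorphically on a neighbourhood of $[0,\infty)$.

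The only mildly delicate point is this uniformity near infinity. If it causes trouble, it suffices, for our purposes, to obtain a neighbourhood of each compact interval $[0,M]$, since we only use $\lambda$ near $\lambda_c(\Delta)$.
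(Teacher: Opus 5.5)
The paper does not prove this lemma; it quotes it from \cite{PetersRegts2019OnaConjectureofSokalConcerningRootsoftheIndependencePolynomial}. Your argument is therefore an independent, self-contained route, and it is correct.

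Part (1) is the right computation. The positive root of $x(1+x)^d=\lambda$ is unique. The multiplier at the fixed point is $f_{d,\lambda}'(x_d)=-dx_d/(1+x_d)$. Monotonicity in $\lambda$ then gives the trichotomy, with threshold $d^d/(d-1)^{d+1}=\lambda_c(d+1)$ and multiplier $-1$ at criticality.

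In part (2), your global-injectivity claim for $P(x)=x(1+x)^d$ near $[0,\infty)$ is true. For example, in the coordinate $w=\log x$ one has $\log P=(d+1)w+d\log(1+e^{-w})$. Its $w$-derivative has positive real part on a far half-strip, so it is injective there. But this is more than you need. Your fallback to intervals $[0,M]$ would prove less than the lemma states, so it should not be relied on.

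The cheaper gluing runs as follows.
\begin{enumerate}
\item Take implicit-function discs $V_{\lambda_0}$ centred at each $\lambda_0\in[0,\infty)$.
\item Shrink them, using continuity of the real $x_d$, so that the local solution $\xi_{\lambda_0}$ equals $x_d$ at the positive real points of $V_{\lambda_0}$.
\item Two such discs have a convex, conjugation-symmetric intersection. If it is nonempty, it contains an interval of positive reals.
\item So $\xi_{\lambda_0}=\xi_{\lambda_1}$ on the overlap by the identity theorem.
\end{enumerate}
The local solutions then glue to a holomorphic $x_d$ on $\bigcup_{\lambda_0} V_{\lambda_0}\supset[0,\infty)$, with no uniformity at infinity needed.

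Citing buys the paper brevity, and the same source also supplies the global dynamics used in Lemma~\ref{lem:FixedPointsfsquared}. Your route buys an elementary verification. Its explicit multiplier formula $-dx/(1+x)$ makes transparent why $\lambda_c(d+1)$ is exactly the threshold.
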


\begin{note}
    For $d=1$, $\lambda_{c}(d+1)=+\infty$, so $x_{1}(\lambda)$
    is attracting for all $\lambda>0$.
\end{note}

We can further understand the limit behaviour of orbits on the entire
interval $[-1,\infty]$:
\begin{lem}\label{lem:FixedPointsfsquared}
    Let $d\in\mathbb{N}_{>0}$ and $\lambda>0$. Then $x_d(\lambda)$ is the unique fixed point of $f_{d,\lambda}$ in the interval $[-1,\infty]$ and:
    \begin{enumerate}
        \item If $\lambda\le\lambda_{c}(d+1)$, then the entire interval $[-1,\infty]$
        lies in the basin of the attracting or parabolic fixed point $x_{d}(\lambda)$.
        \item If $\lambda>\lambda_{c}(d+1)$, then $f_{d,\lambda}$ has an attracting
        $2$-cycle $(x_{1},x_{2})=(x_{d,1}(\lambda),x_{d,2}(\lambda))\in(0,\lambda)^{2}$ (i.e. $x_{1}$ and $x_{2}$
        are attracting fixed points of $f_{d,\lambda}^{2}$ with $f_{d,\lambda}(x_{1})=x_{2}$)
        and the entire interval $[-1,\infty]$ except the fixed point $x_{d}(\lambda)$
        lies inside the basin of the attracting cycle.
    \end{enumerate}
\end{lem}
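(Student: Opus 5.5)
We study $f=f_{d,\lambda}(R)=\lambda/(1+R)^d$ on $[-1,\infty]$, where $f(-1)=\infty$ and $f(\infty)=0$, with the convention that it is continuous as a map into $[0,\infty]$. Since $f$ is strictly decreasing on $(-1,\infty)$ and maps it onto $(0,\infty)$, it has exactly one fixed point there. The endpoints are not fixed: $f(-1)=\infty\neq -1$ and $f(\infty)=0\ne\infty$. Hence $x_d(\lambda)$ is the unique fixed point in $[-1,\infty]$. After one or two iterates, every orbit starting in $[-1,\infty]$ lies in $[0,\lambda]$: indeed $f([-1,\infty])\subseteq[0,\infty]$ and $f([0,\infty])\subseteq[0,\lambda]$. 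So it suffices to understand the dynamics of $f$ on the compact interval $I=[0,\lambda]$, which $f$ maps into itself.

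Since $f$ is decreasing, $g=f^{2}$ is increasing on $I$. For an increasing self-map of an interval, every orbit converges monotonically to a fixed point of $g$. The orbit of $f$ therefore converges either to $x_d$ or to a $2$-cycle $\{x_1,x_2\}$ consisting of fixed points of $g$ other than $x_d$. The key step is to count the fixed points of $g$ in $(0,\lambda)$. Write $x=f(y)$ and $y=f(x)$. Taking logarithms gives $\log x=\log\lambda-d\log(1+y)$, and symmetrically for $y$. A cleaner route is to use that $f$ has negative Schwarzian derivative on $(-1,\infty)$. This holds because $f$ is the composition of the Möbius map $R\mapsto 1+R$ with $t\mapsto \lambda t^{-d}$, and power maps $t^{-d}$ have $S=(1-d^2)/(2t^2)\le 0$. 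Consequently $g$ has negative Schwarzian as well.

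Since $g$ is increasing with $S g<0$, the standard Singer-type argument shows that $g$ has at most three fixed points in $I$. These are $x_d$ together with at most one symmetric pair $x_1<x_d<x_2$ with $f(x_1)=x_2$. Moreover, if the pair exists, it is attracting for $g$ and $x_d$ is repelling; if it does not exist, $x_d$ attracts all of $I$ (for $g$, hence for $f$). Which case occurs is decided by $g'(x_d)=f'(x_d)^2$:
\begin{itemize}
\item If $|f'(x_d)|\le1$, i.e.\ $\lambda\le\lambda_c(d+1)$ by the preceding lemma, then no extra fixed points of $g$ exist. This is immediate for $|f'(x_d)|<1$. In the parabolic case one uses that $g'(x_d)=1$ together with $Sg<0$, which forces $g''' <0$-type behaviour, so $x_d$ is a weakly attracting, non-isolated-from-neither-side fixed point, and the monotone orbits of $g$ converge to $x_d$.
\item If $|f'(x_d)|>1$, then $x_d$ is repelling for $g$. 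Since $g(0)>0$ and $g(\lambda)<\lambda$, the intermediate value theorem gives fixed points $x_1\in(0,x_d)$ and $x_2\in(x_d,\lambda)$. These are unique and attracting by the Schwarzian count, and they are swapped by $f$. All points of $I\setminus\{x_d\}$ then converge under $g$ to $x_1$ or $x_2$.
\end{itemize}
Points of $[-1,\infty]$ outside $I$ land in $I$ after at most two steps, and they cannot land exactly on $x_d$ unless they already equal $x_d$, since $f$ is injective on $(-1,\infty]$ and $f(-1)=\infty\neq x_d$. This gives both statements. The case $d=1$ fits the same argument: $f$ is Möbius, so $Sf=0$ and $g$ is Möbius, hence has at most two fixed points. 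Since $x_d$ is attracting by the previous lemma, case (1) holds.

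The main obstacle is the parabolic case $\lambda=\lambda_c(d+1)$, together with establishing rigorously that $g$ has at most three fixed points. I would handle this with the Schwarzian minimum principle: $g'$ has no positive local minimum on $I$. Between two consecutive fixed points of an increasing map with these properties, $g'$ would have to cross $1$ in a way that forces such a minimum. Alternatively, one can cite the corresponding analysis in \cite{PetersRegts2019OnaConjectureofSokalConcerningRootsoftheIndependencePolynomial}.
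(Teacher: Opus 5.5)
Your proposal is correct, but it reaches the conclusion by a different mechanism from the paper's.

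The paper argues holomorphically. It sets $[x_1,x_2]=\bigcap_n g^n([-1,\infty])$, whose endpoints are the limits of the orbits of the critical points $-1$ and $\infty$. It then uses the Fatou-type facts that every attracting or parabolic cycle of a rational map attracts a critical orbit, one for each attracting direction in the parabolic case. This makes every other fixed point of $g$ repelling, which forces $x_1=x_2=x_d$ when $x_d$ is non-repelling. For $\lambda>\lambda_c(d+1)$ it rules out $x_1,x_2$ being parabolic, since that would need four critical orbits.

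You stay on the real line instead. From $Sf=(1-d^2)/(2(1+R)^2)<0$ you get $Sg<0$, hence the minimum principle for $g'>0$ on $I$. Together with the mean value theorem, this bounds the fixed points of $g$ in $I$ by three and reads off all multipliers from $g'(x_d)$.

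The one step you leave thin is the parabolic case, where two-sided local attraction alone does not exclude further fixed points. What closes it is that $f'(x_d)=-1$ gives $g''(x_d)=f''(x_d)f'(x_d)\bigl(1+f'(x_d)\bigr)=0$, so $g'''(x_d)=Sg(x_d)<0$. Thus $x_d$ is a strict local, and therefore (by the minimum principle) global, maximum of $g'$ on $I$ with value $1$, so $g(x)-x$ is strictly decreasing on $I$.

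Your route is elementary and self-contained, with no appeal to the Fatou or Leau--Fatou results. Your separate M\"obius treatment of $d=1$ is genuinely needed: there $f$ has no critical points, so the paper's critical-orbit count does not literally apply. The paper's route is shorter and matches the complex-dynamical tools used in the rest of the section. It also controls the fixed points of $g$ in all of $\widehat{\mathbb C}$, not only those in $I$.
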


\begin{rem}\label{rem:stabilityWRTparameter}
    The convergence in both cases is uniform on compact subsets of the basin. If $x_{d}(\lambda)$ is attracting, convergence
    to $x_{d}(\lambda)$ is locally uniform in both $x$ and $\lambda$.
    More precisely, for each $\lambda<\lambda_{c}(d+1)$ and every $x$
    in the basin $B_{f_{d,\lambda}}(x_{d}(\lambda))$ there exist neighbourhoods
    $\Lambda\subseteq\mathbb{C}$ of $\lambda$ and $U\subseteq\widehat{\mathbb{C}}$
    of $x$ such that
    \[
    \lim_{n\to\infty}f_{d,\lambda'}^{n}(x')=x_{d}(\lambda')
    \]
    uniformly in $(\lambda',x')\in\Lambda\times U$ (see \cite{Milnor2006Dynamicsinonecomplexvariable}
    and \cite{ManeSadSullivan1983OntheDynamicsofRationalMaps}).
\end{rem}

\begin{proof}
    For any $\lambda>0$, $f_{d,\lambda}$ is strictly decreasing on $[-1,+\infty]$,
    so $g\coloneqq f_{d,\lambda}^{2}$ is strictly increasing, and hence
    injective. We know $g([-1,\infty])=[0,\lambda]$, so
    \begin{equation}
        \bigcap_{n\in\mathbb{N}}g^{n}([-1,\infty])=[x_{1},x_{2}],\label{eq:NestedIntervalsforfsquared}
    \end{equation}
    for some $0<x_{1}\le x_{2}<\lambda$.
    By monotonicity, $x_{1},x_{2}$ are fixed points of $g$
    that are either attracting or parabolic and $f_{d,\lambda}(x_{1})=x_{2}$.
    
    We recall that every non-repelling
    fixed point of a rational map is related to the orbit of a critical
    point (see \cite{Milnor2006Dynamicsinonecomplexvariable}). The critical
    points of $g$ are $\infty$ and $-1$ and are contained in the basins
    of $x_{1}$ and $x_{2}$ respectively, so all further fixed points
    of $g$ must be repelling. Moreover, any fixed points of $g$ in $[-1,\infty]$
    must be contained in $[x_{1},x_{2}]$, including the fixed point $x_{0}=x_{d}(\lambda)$
    of $f_{d,\lambda}$.
    
    (1) Let now $\lambda\le\lambda_{c}(d+1)$. Then the fixed point $x_{0}$
    is attracting or parabolic for $f_{d,\lambda}$ and hence for $g$,
    so $x_{0}\in\{x_{1},x_{2}\}$, but since $f_{d,\lambda}$ maps $x_{1}$
    and $x_{2}$ to each other, $g$ has the unique fixed point $x_{0}=x_{1}=x_{2}$
    on $[-1,\infty]$ and, by \eqref{eq:NestedIntervalsforfsquared},
    the entire interval $[-1,\infty]$ is in the basin of $x_{0}$.
    
    (2) Let $\lambda>\lambda_{c}(d+1)$. Then $x_{0}$ is repelling, so
    $x_{1}<x_{0}<x_{2}$ and if $x>x_{0}$ is small enough, we
    have $g(x)>x$. As before, the monotonicity of $g$ then implies that
    \[
    \bigcap_{n\in\mathbb{N}}g^{n}([x,x_{2}])=[x_{3},x_{2}]
    \]
    with $x_{3}=\lim_{n\to\infty}g^{n}(x)\in(x_{0},x_{2}]$, which is
    an attracting or parabolic fixed point of $g$, so $x_{3}$ must be
    $x_{2}$. Thus we have shown that $\lim_{n\to\infty}g^{n}(x)=x_{2}$
    locally uniformly for all $x\in(x_{0},\infty]$. An analogous argument
    shows that $\lim_{n\to\infty}g^{n}(x)=x_{1}$ locally uniformly for
    all $x\in[-1,x_{0})$.
    
    We have thus shown that all orbits in $[-1.,\infty]$ except the repelling fixed point $x_0$ converge to either the fixed point $x_1$ or $x_2$ and these are either both attracting or both parabolic. We recall that, for every orbit converging to a parabolic fixed point, there must also be an orbit of a critical point converging to it from the same direction (see \cite{Milnor2006Dynamicsinonecomplexvariable}). $x_1$ and $x_2$ each have orbits converging to them from above and from below, but $g$ only has two critical points, so $x_1$ and $x_2$ must be attracting fixed points.
\end{proof}
To demonstrate the relation of zero-freeness and dynamical stability,
we use Lemma~\ref{lem:FixedPointsfsquared} to give a short proof
of a weaker version of a result in \cite{PetersRegts2019OnaConjectureofSokalConcerningRootsoftheIndependencePolynomial}:
\begin{lem}\label{lem:zeroFreeRegion}
    The independence polynomials of the family $\mathcal{T}=\{T_{d^{k}}\}_{k}$
    have no zeros on a neighbourhood of $[0,\lambda_{c}(d+1))$, but have
    zeros accumulating at $\lambda_{c}(d+1)$.
\end{lem}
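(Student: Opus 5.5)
We treat the two halves of the statement separately. Both rest on the representation $R_{T_{d^k},v_0}(\lambda)=f_{d,\lambda}^{\circ k}(0)$, which is \eqref{eq:TreeRatioAsComposition} with $m=0$, and on the observation \eqref{eq:ratio -1 vs Z=0}.

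\emph{Zero-freeness.} Fix $\lambda_0\in[0,\lambda_c(d+1))$. For $\lambda_0>0$, Lemma~\ref{lem:FixedPointsfsquared}(1) shows that $0$ lies in the basin of the attracting fixed point $x_d(\lambda_0)>0$. By Remark~\ref{rem:stabilityWRTparameter} we can choose neighbourhoods $\Lambda\ni\lambda_0$ and $U\ni 0$ such that $f_{d,\lambda}^{\circ k}(x)\to x_d(\lambda)$ uniformly on $\Lambda\times U$. Shrinking $\Lambda$, we may assume $x_d(\lambda)$ stays in a small disc around $x_d(\lambda_0)$ that avoids $-1$. Uniform convergence then gives a $k_0$ with $f_{d,\lambda}^{\circ k}(0)$ in a fixed disc avoiding $-1$ for all $k\ge k_0$ and $\lambda\in\Lambda$. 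For the finitely many $k<k_0$ the values $f^{\circ k}_{d,\lambda_0}(0)$ are nonnegative, so by continuity they too avoid $-1$ once $\Lambda$ is shrunk further. The point $\lambda_0=0$ is handled directly: all ratios are small for small $|\lambda|$.

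We then argue by induction on $k$, using the recursion $Z_{T_{d^k}}=Z_{T_{d^{k-1}}}^d+\lambda Z_{T_{d^{k-2}}}^{d^2}$. Nonvanishing of $Z_{T_{d^{k-1}}}$ together with $R\ne-1$ gives $Z_{T_{d^k}}(\lambda)\ne0$ on $\Lambda$. Note that $Z_{G-v_0}=Z_{T_{d^{k-1}}}^d$ is covered by the induction. Compactness of $[0,\lambda_c-\delta]$ for each $\delta>0$ then yields an open neighbourhood of $[0,\lambda_c(d+1))$.

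\emph{Accumulation of zeros at $\lambda_c$.} We argue by contradiction using Montel. Suppose there is a connected open neighbourhood $W$ of $\lambda_c$ on which no $Z_{T_{d^k}}$ vanishes. Then every $R_k(\lambda):=f_{d,\lambda}^{\circ k}(0)$ is holomorphic on $W$. It avoids $-1$ by \eqref{eq:ratio -1 vs Z=0}, and it avoids $\infty$ because its denominator $Z_{T_{d^{k-1}}}^d$ is nonzero. It also avoids $0$, since $R_k=\lambda/(1+R_{k-1})^d$ and we may take $0\notin W$. Montel's theorem therefore makes $\{R_k\}$ a normal family on $W$.

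Now compare the two sides of $\lambda_c$ on $W\cap\mathbb R$. For $\lambda<\lambda_c$, Lemma~\ref{lem:FixedPointsfsquared}(1) gives $R_k(\lambda)\to x_d(\lambda)$. For $\lambda>\lambda_c$, part (2) gives that $R_{2k}(\lambda)$ converges to one point of the attracting $2$-cycle. Here $0\neq x_d(\lambda)$, since $x_d(\lambda)\in(0,\lambda)$. A subsequence $R_{2k_j}$ converges locally uniformly on $W$ to a holomorphic limit $h$. This $h$ equals $x_d$ on the left real segment, so by the identity theorem $h=x_d$ on all of $W$. On the right real segment, however, $h$ equals a $2$-cycle point, which differs from $x_d(\lambda)$ because $x_{d,1}<x_d<x_{d,2}$. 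This is the desired contradiction.

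The main obstacle is the justification of normality: one must check carefully that three omitted values $\{-1,0,\infty\}$ really hold on all of $W$. Once that is in place, the remaining step is the identity-theorem contradiction between the two sides of $\lambda_c$.
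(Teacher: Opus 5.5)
Your proposal is correct and follows essentially the same route as the paper: zero-freeness below $\lambda_c(d+1)$ comes from the parameter-stability of the attracting fixed point, and the accumulation of zeros at $\lambda_c(d+1)$ comes from Montel's theorem with omitted values $\{-1,0,\infty\}$ plus an identity-theorem contradiction across the bifurcation. The differences are cosmetic. You compare one even-subsequence limit with the holomorphic fixed point $x_d$ rather than comparing the even and odd subsequential limits with each other. You also derive the omitted values $0$ and $\infty$ from nonvanishing of the partition functions rather than from the preimage structure of $f_{d,\lambda}$.
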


The proof uses the notion of normal families and Montel's Theorem, which are fundamental notions in the study of complex dynamical systems, see for example \cite{Beardon1991IterationofRationalFunctions},\cite{CarlesonGamelin1993ComplexDynamics}, \cite{Milnor2006Dynamicsinonecomplexvariable}. Normality is a notion of dynamical stability defined as follows:

\begin{defn}
    A family of holomorphic functions $\mathcal{F}$ is normal at $\lambda\in\mathbb{C}$,
    if there exists a neighbourhood $U$ of $\lambda$ such that for each
    sequence $(g_{n})_{n}$ in $\mathcal{F}$ there exists a subsequence
    $(g_{n_{k}})_{k}$ that converges uniformly on $U$ to a holomorphic
    map $g:U\to\widehat{\mathbb{C}}$.
\end{defn}

Montel's theorem relates normality of a family to omitted values:
\begin{thm}[Montel]
    \label{thm:Montel} Let $\mathcal{F}$ be a family of holomorphic
    functions from an open set $U\subseteq\mathbb{C}$ to the Riemann
    sphere $\mathbb{C}\cup\{\infty\}$ that all omit three values in $\mathbb{C}\cup\{\infty\}$.
    Then $\mathcal{F}$ is normal on $U$.
\end{thm}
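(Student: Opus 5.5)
Since the family $\mathcal{F}$ only appears through the values it omits, I would first normalise. Post-composing every $g\in\mathcal{F}$ with one fixed M\"obius transformation sending the three omitted values to $0,1,\infty$ changes neither normality nor the form of the statement. So we may assume every $g\in\mathcal{F}$ maps $U$ into the thrice-punctured sphere
\[
\Omega:=\widehat{\mathbb C}\setminus\{0,1,\infty\}.
\]
The strategy is to prove \emph{equicontinuity} of $\mathcal{F}$ with respect to the spherical metric on the target, locally on $U$. Because $\widehat{\mathbb C}$ is compact, the Arzel\`a--Ascoli theorem then gives normality.

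\textbf{Step 1: a complete hyperbolic metric on $\Omega$.} I would take from uniformisation (or explicitly from the elliptic modular function $\lambda\colon\mathbb{D}\to\Omega$) that $\Omega$ has a holomorphic universal covering by the unit disc $\mathbb{D}$. Pushing down the Poincar\'e metric of $\mathbb{D}$ gives a hyperbolic metric $\rho_\Omega(z)\abs{dz}$ on $\Omega$. Its density is continuous and positive on $\Omega$. Near each puncture it blows up; for instance $\rho_\Omega(z)\gtrsim 1/(\abs{z}\log(1/\abs{z}))$ near $0$, with analogous bounds at $1$ and $\infty$. Comparing with the spherical density $\sigma(z)=2/(1+\abs{z}^2)$, which stays bounded (and decays like $\abs{z}^{-2}$ at $\infty$), one obtains a constant $c>0$ with
\[
\rho_\Omega\ge c\,\sigma \quad\text{on } \Omega.
\]
This comparison near the punctures is the main technical obstacle. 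I would handle it by an explicit local comparison with the hyperbolic metric of the punctured disc, which is smaller than $\rho_\Omega$ by the Schwarz--Pick lemma applied to inclusion.

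\textbf{Step 2: Schwarz--Pick gives a uniform derivative bound.} Fix a closed disc $\overline{D}\subset U$, where $D$ is a slightly larger open disc in $U$. Every holomorphic $g\colon D\to\Omega$ lifts through the covering, since $D$ is simply connected. It is therefore non-expanding from $(D,\rho_D)$ to $(\Omega,\rho_\Omega)$, that is, $\rho_\Omega(g(z))\abs{g'(z)}\le\rho_D(z)$. Combining this with Step 1, the spherical derivative satisfies
\[
g^\#(z):=\sigma(g(z))\abs{g'(z)}\le c^{-1}\rho_D(z),
\]
which is uniformly bounded on a smaller concentric disc, independently of $g\in\mathcal{F}$. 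Hence $\mathcal{F}$ is uniformly Lipschitz, and so equicontinuous, from the Euclidean metric to the chordal metric on compact subsets of $D$.

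\textbf{Step 3: Arzel\`a--Ascoli and diagonalisation.} Take a countable exhaustion of $U$ by compact sets, each covered by finitely many such discs. Arzel\`a--Ascoli with target the compact space $\widehat{\mathbb C}$, together with a diagonal argument, shows that every sequence in $\mathcal{F}$ has a subsequence converging locally uniformly in the chordal metric on $U$. Finally I would check the limit: a locally uniform chordal limit of holomorphic maps is either meromorphic or identically $\infty$, by applying Weierstrass's theorem locally to $g_n$ or to $1/g_n$. This gives normality in the sense of the definition above. Of the three steps, only Step 1 needs genuine input beyond standard complex analysis: the existence of the covering of $\Omega$ by $\mathbb{D}$ and the lower bound on $\rho_\Omega$ near the punctures.
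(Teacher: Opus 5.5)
The paper does not prove this statement: Montel's theorem is quoted from the complex dynamics literature, so your argument has to stand on its own. Its architecture is the standard hyperbolic-metric proof: normalise to $\Omega=\widehat{\mathbb C}\setminus\{0,1,\infty\}$, bound the spherical derivative by Schwarz--Pick together with a comparison $\rho_\Omega\ge c\,\sigma$, and conclude with Arzel\`a--Ascoli. Steps 2 and 3 are fine.

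The gap is in the one estimate everything rests on. Your justification of $\rho_\Omega\ge c\,\sigma$ near the punctures runs the wrong way. The punctured disc $\mathbb D^*=\{0<\abs{z}<1\}$ is contained in $\Omega$, so Schwarz--Pick for the inclusion $\mathbb D^*\hookrightarrow\Omega$ gives $\rho_\Omega\le\rho_{\mathbb D^*}$, which is an \emph{upper} bound on $\rho_\Omega$. No inclusion can give the lower bound you need. Every domain of $\widehat{\mathbb C}$ strictly containing $\Omega$ omits at most two points and is therefore not hyperbolic. A lower bound for $\rho_\Omega$ near $0$ amounts to a Landau-type bound on $\abs{h'(0)}$ over all holomorphic $h\colon\mathbb D\to\Omega$ with $h(0)$ near $0$. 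That is essentially the non-trivial content of Montel's theorem, and as written it is asserted rather than proved.

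You can close the gap using only the input you already assume. The logarithmic asymptotics are not needed: since $\sigma\le 2$, it suffices to have $\rho_\Omega\ge m>0$ on $\{0<\abs{z}\le 1/2\}$. This transfers to the other punctures as follows.
\begin{itemize}
\item The automorphisms $z\mapsto 1-z$ and $z\mapsto 1/z$ of $\Omega$ are $\rho_\Omega$-isometries. The first carries the bound to $\{0<\abs{z-1}\le 1/2\}$.
\item The second gives $\rho_\Omega(z)=\rho_\Omega(1/z)\abs{z}^{-2}\ge \tfrac m2\,\sigma(z)$ for $\abs{z}\ge 2$.
\item On the remaining compact part of $\Omega$, the ratio $\rho_\Omega/\sigma$ is bounded below by continuity.
\end{itemize}
For the bound near $0$, observe that $z\mapsto z^2$ is a covering $\mathbb C\setminus\{0,\pm1\}\to\Omega$, hence a local isometry. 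Also $\mathbb C\setminus\{0,\pm1\}\subset\Omega$, so Schwarz--Pick for this inclusion applies. Together these give
\[
2\abs{z}\,\rho_\Omega(z^2)=\rho_{\mathbb C\setminus\{0,\pm1\}}(z)\ge\rho_\Omega(z).
\]
Let $m$ be the minimum of $\rho_\Omega$ on the annulus $\{1/4\le\abs{z}\le 1/2\}$. Induction gives $\rho_\Omega(w^{2^k})\ge m$ for all $w$ in the annulus and all $k\ge0$, and these points exhaust $\{0<\abs{z}\le 1/2\}$.

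Two alternatives also work. First, the stabiliser of a cusp in the deck group gives an intermediate covering $\mathbb D^*\to\Omega$ that is injective near the puncture. Near $0$, $\rho_\Omega$ then \emph{equals} a transported copy of $\rho_{\mathbb D^*}$, rather than being compared with it by inclusion. Second, you can apply Ahlfors' lemma to an explicit metric on $\Omega$ of curvature at most $-1$ that dominates a multiple of $\sigma$; this avoids uniformisation altogether.
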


We can now prove the lemma.

\begin{proof}[Proof of Lemma~\ref{lem:zeroFreeRegion}]
    By Remark~\ref{rem:stabilityWRTparameter}, convergence of the sequence
    of ratios $(R_{T_{d^{n}}}(\lambda)=f_{d,\lambda}^{n}(0))_{n}$ to
    $x_{d}(\lambda)$ persists on a small complex neighbourhood of $[0,\lambda_{c}(d+1))$.
    In particular none of these ratios are $-1$, so there are no zeros
    of the independence polynomial in that neighbourhood.
    
    
    Assume that $R_{T_{d^{n}}}(\lambda)=f_{d,\lambda}^{n}(0)$ is not
    equal to $-1$ for all $n\in\mathbb{N}$ and all $\lambda$ in a neighbourhood
    $\Lambda$ of $\lambda_{c}(d+1)$. Then, since $x=-1$ is the only
    solution to $f_{d,\lambda}(x)=\infty$ and $\infty$ is the only solution
    to $f_{d,\lambda}(x)=0$, the family $\{R_{T_{d^{n}}}(\lambda)=f_{d,\lambda}^{n}(0)\}_{n}$
    avoids $-1,$ $\infty$, and $0$. Hence, by Montel's theorem~\ref{thm:Montel},
    the family is normal on $\Lambda$.
    
    However, the change of dynamics in Lemma~\ref{lem:FixedPointsfsquared} shows
    that the family $\mathcal{F}=\{R_{T_{d^{n}}}(\lambda)=f_{d,\lambda}^{n}(0)\}_{n}$
    cannot be normal near $\lambda_{c}(d+1)$: Assume there is a neighbourhood $\Lambda$ of $\lambda_{c}(d+1)$ on which
    $\mathcal{F}$ is normal. We may assume $\Lambda$ to be connected. Then, by definition, for the sequences $(f_{d,\lambda}^{2n})_{n}$
    and $(f_{d,\lambda}^{2n+1})_{n}$ there exist subsequences
    $(n_{k})_{k}$ and $(n_{k}')_{k}$ with holomorphic limit functions:
    \[
    g_{1}(\lambda)=\lim_{k\to\infty}f_{d,\lambda}^{2n_{k}}(0),\quad g_{2}(\lambda)=\lim_{k\to\infty}f_{d,\lambda}^{2n_{k}'+1}(0).
    \]
    By Lemma~\ref{lem:FixedPointsfsquared}, we have
    \[
    g_{1}(\lambda)=x_{d}(\lambda)=g_{2}(\lambda)\quad\text{for }\lambda\le\lambda_{d},
    \]
    but
    \[
    g_{1}(\lambda)=x_{d,1}(\lambda)\neq x_{d,2}(\lambda)=g_{2}(\lambda)\quad\text{for }\lambda>\lambda_{d}.
    \]
    This is a contradiction to the identity theorem for the holomorphic
    functions $g_{1}$ and $g_{2}$ on $\Lambda$.
    
    Hence, for any neighbourhood $\Lambda$ of $\lambda_{c}(d+1)$
    there exist $\lambda\in\Lambda$ and $T_{d^{n}}\in\mathcal{T}$, such
    that $R_{T_{d^{n}}}(\lambda)=f_{d,\lambda}^{n}(0)=-1$, that is, $Z_{T_{d^{n}}}(\lambda)=0$.
\end{proof}

We will show that this dynamical derivation of accumulating zeros
can still be applied when we attach long paths to the leaves.

\subsection{Zeros accumulating at \texorpdfstring{$\lambda_{c}(\Delta)$}{the critical point}}
For a sequence of positive integers $(m_{k})_{k}$, we denote $T_{k}=T_{d^{k},1^{m_{k}}}$.
Then we have
\begin{equation}
    R_{T_{k},v_{0}}=f_{d,\lambda}(R_{T_{k}',v'})=f_{d,\lambda}^{2}(R_{T_{k}'',v''})
    \label{eq:twoRecursionSteps}
\end{equation}
with $T_{k}'=T_{d^{k-1}1^{m_{k}}}$ and $T_{k}''=T_{d^{k-2}1^{m_{k}}}$. The following proposition gives us one part of Theorem~\ref{thm:VSSM at a distance and zeros}.
\begin{prop}
    \label{prop:ZerosForAdjFamily} 
    Let $(m_{k})_{k}$ be a sequence of non-negative integers.
    Then at least one of the families $\mathcal{T}=\{T_{k}\}_{k}$, $\mathcal{T}_{1}=\{T_{k}'\}_{k}$,
    and $\mathcal{T}_{2}=\{T_{k}''\}_{k}$ has zeros of its independence
    polynomial accumulating at $\lambda_{c}(\Delta)$.
\end{prop}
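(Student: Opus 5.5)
Assume, for contradiction, that none of the three families has zeros accumulating at $\lambda_c:=\lambda_c(\Delta)=\lambda_c(d+1)$. Then there is a connected neighbourhood $\Lambda$ of $\lambda_c$ on which no $T_k$, $T_k'$, $T_k''$ has a zero. The first step is to turn this into a normality statement, as in the proof of Lemma~\ref{lem:zeroFreeRegion}. Write
\[
h_k(\lambda):=R_{T_k'',v''}(\lambda)=f_{d,\lambda}^{\circ (k-2)}\circ f_{1,\lambda}^{\circ m_k}(0).
\]
By \eqref{eq:twoRecursionSteps}, $R_{T_k'}=f_{d,\lambda}(h_k)$ and $R_{T_k}=f_{d,\lambda}^2(h_k)$. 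By~\eqref{eq:ratio -1 vs Z=0}, the ratio $-1$ forces a zero. So absence of zeros of $T_k$ gives $f_{d,\lambda}^2(h_k)\neq -1$, hence $f_{d,\lambda}(h_k)\neq\infty$, hence $h_k\neq -1$. Absence of zeros of $T_k'$ gives $f_{d,\lambda}(h_k)\neq -1$, so $h_k\neq\infty$. Absence of zeros of $T_k''$ gives $h_k\neq -1$ directly. We need three omitted values, so we also want $h_k\neq 0$. Since $h_k=f_{d,\lambda}(\cdot)$ and $f_{d,\lambda}$ takes the value $0$ only at $\infty$, it suffices that the previous ratio is not $\infty$, i.e.\ the ratio one level deeper is not $-1$.

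The cleanest way to arrange this is to work with $R_{T_k}$, $R_{T_k'}$ and $R_{T_k''}$ simultaneously and choose which one to apply Montel to. Take $\mathcal F=\{R_{T_k',v'}\}_k$. It omits $-1$, because $T_k'$ is zero-free. It omits $\infty$, because $R_{T_k'}=f_{d,\lambda}(h_k)$ and $h_k\neq -1$ (as $T_k''$ is zero-free). It omits $0$, because $R_{T_k}=f_{d,\lambda}(R_{T_k'})\neq -1$ forces $R_{T_k'}\neq\infty$, and then $R_{T_k'}=0$ would require $h_k=\infty$, i.e.\ $R$ at depth three equal to $-1$. To avoid a fourth family, I would instead note that the poles and zeros of $R_{T_k'}$ are isolated in $\lambda$ and fixed (at most finitely many in $\Lambda$ after shrinking $\Lambda$ away from them is not uniform in $k$). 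I therefore expect the most robust choice is to apply Montel to $\mathcal F=\{R_{T_k,v_0}\}_k$, which omits $-1$, $\infty$ (since $R_{T_k'}\neq -1$) and $0$ (since $R_{T_k}=0$ iff $R_{T_k'}=\infty$ iff $h_k=-1$, excluded by $T_k''$). Montel's Theorem~\ref{thm:Montel} then makes $\mathcal F$ normal on $\Lambda$.

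The second step is to contradict normality by the real dynamics. For real $\lambda$ near $\lambda_c$, the value $y_k:=f_{1,\lambda}^{m_k}(0)$ lies in $[0,\lambda]\subset[-1,\infty]$, and $R_{T_k}=f_{d,\lambda}^{k}(y_k)$. The key point, and the step I expect to be the main obstacle, is to control the limit along subsequences uniformly in the varying initial point $y_k$. For $\lambda<\lambda_c$, Lemma~\ref{lem:FixedPointsfsquared}(1) with uniform convergence on the compact set $[0,\lambda]$ (Remark~\ref{rem:stabilityWRTparameter}) gives $R_{T_k}\to x_d(\lambda)$. For $\lambda>\lambda_c$, the convergence to the attracting 2-cycle fails only near the repelling point $x_d(\lambda)$. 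Here I would use that $f_{1,\lambda}$ has its attracting fixed point $x_1(\lambda)$, which differs from $x_d(\lambda)$ for $\lambda$ near $\lambda_c$ (check $x_1(\lambda_c)\neq x_d(\lambda_c)$ directly). Passing to a subsequence, $y_k$ converges to a point of $[0,\lambda]$ distinct from $x_d(\lambda)$ for all but countably many $\lambda$, or $m_k$ is bounded along a subsequence and $y_k$ is eventually constant. In either case we obtain, for a dense set of $\lambda>\lambda_c$, subsequences along even and odd $k$ converging to $x_{d,1}(\lambda)$ and $x_{d,2}(\lambda)$ in some order.

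Now extract, by normality, holomorphic limits $g_1$ and $g_2$ along even and odd indices of a common subsequence. By the first case, they agree with $x_d$ on the real interval to the left of $\lambda_c$, so $g_1=g_2$ by the identity theorem. By the second case, they differ to the right of $\lambda_c$, since $x_{d,1}\neq x_{d,2}$. This is a contradiction, exactly as in Lemma~\ref{lem:zeroFreeRegion}, and so at least one of the three families has zeros accumulating at $\lambda_c(\Delta)$.
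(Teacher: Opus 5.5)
Your route is the paper's: Montel's theorem for $\{R_{T_k,v_0}\}_k$, which omits $-1$, $\infty$ and $0$ because $T_k$, $T_k'$, $T_k''$ are zero-free. This is followed by an inline version of Lemma~\ref{lem:attractingPlusNonNormalIsNormal} with $z_k(\lambda)=f_{1,\lambda}^{m_k}(0)\to z(\lambda)=x_1(\lambda)$, and $x_1(\lambda)>x_d(\lambda)$ follows at once from $x(1+x)<x(1+x)^d$ for $x>0$. When $m_k\to\infty$ your argument is complete. This is also the only case the paper's own proof handles, since it invokes $f_{1,\lambda}^{m_k}(0)\to x_1(\lambda)$. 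The implications in your first paragraph, such as $f_{d,\lambda}^2(h_k)\neq-1\Rightarrow f_{d,\lambda}(h_k)\neq\infty$, are false because $f_{d,\lambda}(\infty)=0$. Your final list of omitted values, with $\infty$ excluded via $T_k'$ and $0$ via $T_k''$, is the correct one.

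Your extension to bounded $m_k$, however, fails as stated, because nothing forces even and odd $k$ into the same regime. Take $m_k=0$ for even $k$ and $m_k=k$ for odd $k$, and let $\lambda>\lambda_c$ be real.
\begin{itemize}
\item The even terms start at $0<x_d(\lambda)$, so $f_{d,\lambda}^{2n}(0)\to x_{d,1}(\lambda)$.
\item The odd terms start near $x_1(\lambda)>x_d(\lambda)$, so $f_{d,\lambda}^{2n}(y_{2n+1})\to x_{d,2}(\lambda)$ and hence $f_{d,\lambda}^{2n+1}(y_{2n+1})\to x_{d,1}(\lambda)$.
\end{itemize}
Thus $g_1=g_2$ on both sides of $\lambda_c$, and your claimed contradiction evaporates.

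The repair is to compare one subsequential limit with $x_d$ rather than with the other parity. Let $g$ be a locally uniform limit of a subsequence along which $m_k\to\infty$ or $m_k\equiv m$. By your uniform-convergence argument, $g=x_d$ on real $\lambda<\lambda_c$, hence $g\equiv x_d$ on the connected set $\Lambda$. But for real $\lambda>\lambda_c$, every limit point of $f_{d,\lambda}^{k}(y_k)$ lies on the $2$-cycle $\{x_{d,1}(\lambda),x_{d,2}(\lambda)\}$, which avoids $x_d(\lambda)$.

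In the constant case this still needs $f_{1,\lambda}^m(0)\not\equiv x_d(\lambda)$ near $\lambda_c$, which you did not check. It holds: both sides are holomorphic on a connected neighbourhood of $[0,\infty)$. At $\lambda=0$ one has $x_d(\lambda)=\lambda-d\lambda^2+O(\lambda^3)$ with $d\ge 2$, while $f_{1,\lambda}^m(0)\in\{0,\ \lambda,\ \lambda-\lambda^2+O(\lambda^3)\}$. Your phrase ``for all but countably many $\lambda$'' does not establish this, and it is unnecessary when $m_k\to\infty$.
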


\begin{proof}
    Our strategy is the same as for the homogeneous case above: Assume
    for contradiction, $Z_{T_{k}}(\lambda)$, $Z_{T_{k}'}(\lambda)$,
    and $Z_{T_{k}''}(\lambda)$ are non-zero for every $\lambda$ in a
    neighbourhood $\Lambda$ of $\lambda_{d}$ and all $k\in\mathbb{N}$.
    Then $R_{T_{k},v}$, $R_{T_{k}',v'}$, and $R_{T_{k}'',v''}$ all
    omit the value $-1$ on $\Lambda$, and, by \eqref{eq:twoRecursionSteps},
    $R_{T_{k},v}$ omits $-1$, $\infty=f_{d,\lambda}(-1)$ and $0=f_{d,\lambda}^{2}(-1)$.
    This would imply that $R_{T_{k},v_{0}}(\lambda)$ is normal on $\Lambda$
    by Theorem~\ref{thm:Montel} (Montel's theorem).
    
    To reach a contradiction, we therefore need to show that the family
    $\{R_{T_{k},v_{0}}\}_{k}$ cannot be normal near $\lambda_{c}(\Delta)$. Recall
    that
    \[
    R_{T_{k},v_{0}}(\lambda)=f_{d,\lambda}^{\circ k}\circ f_{1,\lambda}^{\circ m_{k}}(0)
    \]
    and, since $0$ is contained in the attracting basin of the fixed
    point $x_{1}(\lambda)$ of $f_{1,\lambda}$ for all $\lambda$, we
    know that $f_{1,\lambda}^{m_{k}}(0)$ converges
    to $x_{1}(\lambda)$ uniformly on a neighbourhood of $\lambda_{c}(\Delta)$.
    Moreover, for $\lambda\in(0,\lambda_{c}(\Delta)]$, the limit $x_{1}(\lambda)$
    is real and hence contained in the basin of $x_{d}(\lambda)$. Taking
    $z(\lambda)=x_{1}(\lambda)$ and $z_k({\lambda})=f_{1,\lambda}^{m_{k}}(0)$
    in Lemma~\ref{lem:attractingPlusNonNormalIsNormal} below, shows
    that $\{R_{T_{k},v_{0}}\}_{k}$ cannot be normal, completing our proof
    by contradiction.
\end{proof}

\begin{rem}
    Removing the root (and its neighbours) to create the families $\mathcal T'$ (and $\mathcal T''$) enabled our argument similar to~\cite[Lem.~13]{Buys2021CayleyTreesDoNotDeterminetheMaximalZeroFreeLocusoftheIndependencePolynomial}. In fact, a more technical proof using the Theorem of Lavaurs \cite{Lavaurs1989SystemesDynamiquesHolomorphesExplosionDePointsPeriodiquesParaboliques} (see also \cite{Douady1994DoesaJuliaSetDependContinuouslyonthePolynomial}) shows that it is enough to only consider the family $\mathcal T$ (see the appendix of \cite{PetersRegts2019OnaConjectureofSokalConcerningRootsoftheIndependencePolynomial}).
    Since the above result is sufficient for our purposes and its proof is much more elementary, we leave the proof of the more elegant statement to the interested reader.
\end{rem}

\begin{lem}\label{lem:attractingPlusNonNormalIsNormal}
    Let $\Delta\in\mathbb N$ and let $z(\lambda)$ be a holomorphic function in $\lambda$ on a neighbourhood of $\lambda_c(\Delta)$. Assume that $z(\lambda) \neq x_{d}(\lambda)$ for all $\lambda$ and that, moreover, for non-negative real $\lambda\le\lambda_c(\Delta)$, $z(\lambda)$ is real and contained in the basin of the attracting or parabolic fixed point $x_{d}(\lambda)$. Moreover, let $(z_k({\lambda}))_k$ be a sequence of holomorphic functions in $\lambda$ such that $z_k({\lambda})\xrightarrow[k\to\infty]{}z(\lambda)$
    uniformly in $\lambda$ near $\lambda_{c}(\Delta)$. Then the family $\{f_{d,\lambda}^{n}(z_n(\lambda))\}_{n}$
    is not normal in $\lambda$ near $\lambda_{c}(\Delta)$.
\end{lem}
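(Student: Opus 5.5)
We argue by contradiction, mirroring the proof of Lemma~\ref{lem:zeroFreeRegion}. Suppose $\mathcal F=\{h_n(\lambda):=f_{d,\lambda}^{n}(z_n(\lambda))\}_n$ is normal on a connected neighbourhood $\Lambda$ of $\lambda_c(\Delta)$; shrinking $\Lambda$, we may assume $z_k\to z$ uniformly on $\Lambda$ and that $z$, $x_d$ are holomorphic there. By normality we extract subsequences $(2n_k)$ and $(2n_k'+1)$ along which $h_{2n_k}\to g_1$ and $h_{2n_k'+1}\to g_2$ locally uniformly on $\Lambda$, with $g_1,g_2\colon\Lambda\to\widehat{\mathbb C}$ holomorphic (possibly constantly $\infty$). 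The goal is to compute $g_1,g_2$ on the real segment $\Lambda\cap\mathbb R$ on both sides of $\lambda_c(\Delta)$ and contradict the identity theorem.

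First, for real $\lambda<\lambda_c(\Delta)$ in $\Lambda$: $z(\lambda)$ is a real point in the basin of the attracting fixed point $x_d(\lambda)$, so by Remark~\ref{rem:stabilityWRTparameter} there are neighbourhoods $U\ni z(\lambda)$ and $\Lambda'\ni\lambda$ on which $f^n_{d,\lambda'}(x')\to x_d(\lambda')$ uniformly. Since $z_n\to z$ uniformly, $z_n(\lambda')\in U$ for large $n$ and $\lambda'$ near $\lambda$, hence $h_n\to x_d$ on $\Lambda'$. Thus $g_1=g_2=x_d$ on an open set, and by the identity theorem $g_1=g_2=x_d$ on all of $\Lambda$.

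Second, for real $\lambda>\lambda_c(\Delta)$ close to $\lambda_c(\Delta)$: by Lemma~\ref{lem:FixedPointsfsquared}(2) there is an attracting $2$-cycle $x_{d,1}(\lambda)\neq x_{d,2}(\lambda)$, and every point of $[-1,\infty]\setminus\{x_d(\lambda)\}$ converges under $f^2_{d,\lambda}$ to $x_{d,1}$ or $x_{d,2}$, locally uniformly. Since $z(\lambda)$ is real for $\lambda\le\lambda_c$ and holomorphic, it is real on all of $\Lambda\cap\mathbb R$ (Schwarz reflection / real coefficients of its power series at $\lambda_c$), and $z(\lambda)\neq x_d(\lambda)$ by hypothesis. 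The point $z(\lambda)$ should lie in $(-1,\infty)$ for $\lambda$ near $\lambda_c$, because $z(\lambda_c)$ lies in the real basin of the parabolic point and $-1$ is not (it maps to $\infty$ and then to $0$, which is fine, but $z(\lambda_c)$ lies in the basin, which is open in $\mathbb R$ except possibly at the poles, so a small real neighbourhood is still in $[-1,\infty]$ or in a preimage; I would handle this by applying one iterate of $f_{d,\lambda}$, which maps $\mathbb R$ into $[-1,\infty]$ up to sign issues, and checking the real dynamics directly). Then, by the locally uniform convergence of $f^{2n}_{d,\lambda}$ near $z(\lambda)$ together with $z_n\to z$, $h_{2n}(\lambda)\to x_{d,i}(\lambda)$ and $h_{2n+1}(\lambda)\to x_{d,3-i}(\lambda)$ for some $i\in\{1,2\}$. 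Hence $g_1(\lambda)\neq g_2(\lambda)$ for such $\lambda$, contradicting $g_1=g_2$ on $\Lambda$.

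The main obstacle is this second step: ensuring that $z(\lambda)$ for real $\lambda$ slightly above $\lambda_c$ is a real point different from $x_d(\lambda)$ that lies in the domain where Lemma~\ref{lem:FixedPointsfsquared}(2) applies (the interval $[-1,\infty]$ or its preimage), and that the convergence to the $2$-cycle is uniform enough to pass the perturbation $z_n\to z$. It suffices to exhibit a single such $\lambda$, so I would first verify $z(\lambda_c)\in(-1,\infty)$, or else replace $z$ by $f_{d,\lambda}(z)$, which lands in $[0,\infty]\subset[-1,\infty]$ for real inputs $\ne -1$; continuity then gives the claim for $\lambda$ slightly larger than $\lambda_c$, using $z\neq x_d$ to avoid the repelling fixed point.
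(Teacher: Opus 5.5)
Your outline is the paper's proof. You assume normality on a connected $\Lambda$, extract limits $g_1,g_2$ of even and odd iterates, and show $g_1=g_2=x_d$ from the attracting side; your use of Remark~\ref{rem:stabilityWRTparameter} on a complex neighbourhood followed by the identity theorem is fine. You then contradict this at a single real $\lambda>\lambda_c(\Delta)$ via the attracting $2$-cycle of Lemma~\ref{lem:FixedPointsfsquared}(2).

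The gap is exactly the step you flag, and your proposed repair fails for three reasons:
\begin{itemize}
\item For odd $d$ and real $x<-1$ one has $f_{d,\lambda}(x)=\lambda/(1+x)^d<0$, so $f_{d,\lambda}(z)$ need not lie in $[0,\infty]$.
\item For even $d$, $x_d(\lambda)$ has a second real preimage $-2-x_d(\lambda)$, so $z\neq x_d$ does not imply $f_{d,\lambda}(z)\neq x_d$.
\item You cannot ``verify'' $z(\lambda_c(\Delta))\in(-1,\infty)$, because nothing in the hypotheses forces it.
\end{itemize}

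In fact the step cannot be closed at all from the stated hypotheses. For even $d$ (e.g.\ $\Delta=3$) let $z_k=z:=-2-x_d(\lambda)$. This $z$ has all the required properties:
\begin{itemize}
\item it is holomorphic near $\lambda_c(\Delta)$ and real on the real axis;
\item it is never equal to $x_d(\lambda)$, because $x_d(\lambda_c(\Delta))=1/(d-1)\neq -1$;
\item it lies in the basin of $x_d(\lambda)$, because $f_{d,\lambda}(z(\lambda))=\lambda/(1+x_d(\lambda))^d=x_d(\lambda)$.
\end{itemize}
But then $f^n_{d,\lambda}(z_n(\lambda))=x_d(\lambda)$ for all $n\geq 1$, which is a normal family.

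The paper's proof uses the missing hypothesis tacitly, when it asserts that $z(\lambda)\in[-1,\infty]\setminus\{x_d(\lambda)\}$ for real $\lambda$ slightly above $\lambda_c(\Delta)$. This is harmless in the only application, Proposition~\ref{prop:ZerosForAdjFamily}, where $z=x_1(\lambda)\in(0,\lambda)$.

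So add $z(\lambda_c(\Delta))\in(-1,\infty)$ as a hypothesis rather than something to prove. Continuity then gives $z(\lambda)\in(-1,\infty)\setminus\{x_d(\lambda)\}$ for real $\lambda$ slightly above $\lambda_c(\Delta)$. The basin of the attracting cycle is open and the convergence is locally uniform there. Hence $z_n\to z$ forces $h_{2n}(\lambda)$ and $h_{2n+1}(\lambda)$ to converge to the two distinct points of the cycle. With that hypothesis your argument is complete and coincides with the paper's.
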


\begin{proof}
    For fixed $\lambda\le\lambda_{c}(\Delta)$ and $n$ large enough, $z_n(\lambda)$
    is contained in a compact subset $K_{\lambda}$ of the basin of $x_{d}(\lambda)$,
    and by uniform convergence we have
    \[
    \lim_{n\to\infty}f_{d,\lambda}^{n}(z_n(\lambda))=x_{d}(\lambda).
    \]
    For $\lambda>\lambda_{c}(\Delta)$, $z(\lambda)$ remains real by the identity theorem for holomorphic functions and if $\lambda$ is small enough, since $z({\lambda_{d}}) \neq x_{d}(\lambda_{d})$,
    we still have $z(\lambda)\in [-1,\infty]\setminus \{x_{d}(\lambda)\}$, so $z(\lambda)$
    is in the basin of the attracting periodic cycle $(x_{d,1}(\lambda),x_{d,2}(\lambda))$
    of $f_{d,\lambda}$ and we have
    \[
    f_{d,\lambda}^{2n}\to x_{d,1}(\lambda),\quad f_{d,\lambda}^{2n+1}\to x_{d,2}(\lambda)
    \]
    uniformly on a compact neighbourhood $K_{\lambda}$ of $z(\lambda)$.
    
    Assume $\{f_{d,\lambda}^{n}(z_n(\lambda))\}_{n}$ is normal on a
    connected neighbourhood $\Lambda$ of $\lambda_{c}(\Delta)$. Then there exists
    holomorphic limit functions
    \[
    g_{1}(\lambda)=\lim_{k\to\infty}f_{d,\lambda}^{2n_{k}}(z_n(\lambda)),\quad g_{2}(\lambda)=\lim_{k\to\infty}f_{d,\lambda}^{2n_{k}+1}(z_n(\lambda))
    \]
    with
    \[
    g_{1}(\lambda)=x_{d}(\lambda)=g_{2}(\lambda)\quad\text{for }\lambda<\lambda_{d},
    \]
    but
    \[
    g_{1}(\lambda)=x_{d,1}(\lambda)\neq x_{d,2}(\lambda)=g_{2}(\lambda)\quad\text{for }\lambda>\lambda_{d}.
    \]
    This is a contradiction to the identity theorem for the holomorphic
    functions $g_{1}$ and $g_{2}$ on $\Lambda$.
\end{proof}

\subsection{\texorpdfstring{$\varphi$-VSSM for $\{T_{k}\}_k$}{VSSM for the trees Tk}}
Given an unbounded increasing function $\varphi$, we now show that if we chose the sequence $(m_{k})_{k}$ growing fast enough,
the family of trees $\mathcal{T}=\{T_{k}=T_{d^{k},1^{m_{k}}}\}_{k\in\mathbb{N}}$
satisfies $\varphi$-VSSM for all parameters $\lambda\in[0,\lambda^*]$, allowing us to prove Theorem~\ref{thm:VSSM at a distance and zeros}.

\begin{prop}
    \label{prop:SSMestimateFixedk}
    Let $\lambda^\star>0$. For each $k\in\mathbb{N}$, there exists
    a constant $M_{k}>0$ such that for all $\lambda\in[0,\lambda^\star]$, all induced subgraphs $H$ of $T_{k}$, and
    all $\ell>2k$ we have
    \begin{equation}
        |R_{(H,v),\tau_{1}}(\lambda)-R_{(H,v),\tau_{2}}(\lambda)|<M_{k}c^{\ell}\label{eq:SSMestimateFixedk}
    \end{equation}
    for all $v\in V(H)$ and all boundary conditions $\tau_{1},\tau_{2}:S_H(v,\ell)\to\{0,1\}$.
\end{prop}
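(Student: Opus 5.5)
The idea is that in any induced subgraph $H$ of $T_k$, every long path from $v$ spends all but at most $2k$ of its steps in the path portions of $T_k$. There the recursion is the contracting map $f_{1,\lambda}$, while the branching part only contributes a $k$-dependent constant. The constant $c\in(0,1)$ will be taken uniform in $\lambda\in[0,\lambda^\star]$, coming from the contraction of $f_{1,\lambda}(z)=\lambda/(1+z)$ on $[0,\lambda]$.

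\emph{Step 1: reduction to a rooted tree.} Any induced subgraph $H$ of $T_k$ is a forest, and only the component containing $v$ matters. Root that component at $v$. It is a tree of maximum degree $\Delta$ whose vertices, apart from at most $2k$ ``branching'' levels along any root-to-leaf path, have at most one child. Concretely, a path from $v$ to a vertex at distance $\ell$ goes up toward the top of $T_k$ and then down. It can traverse at most $2k$ edges inside the copy of $T_{d^k}$, and all other edges lie on the attached paths, where every vertex has degree at most $2$ in $H$.

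\emph{Step 2: express the ratio as a composition.} Set up the recursion \eqref{eq:recursion ratio tree} from the boundary at distance $\ell$ up to $v$. As in Lemma~\ref{lem:decomposing F into a composition of mobius transformations}, perturbing a single boundary value along a path $P$ acts through a composition $f_{\lambda_\ell}\circ\cdots\circ f_{\lambda_1}$, where each $\lambda_i\in[0,\lambda]$. The boundary conditions $\tau_1,\tau_2$ differ on possibly many vertices of $S_H(v,\ell)$. To handle this, I would use monotonicity (Lemma~\ref{lem:monotone behaviour of F} and Corollary~\ref{cor:vssm =ssm for trees}) to reduce to comparing the all-$0$ and all-$1$ conditions. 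I would then telescope as in \eqref{eq:eq:decompose SSM using telescoping} into at most $|S_H(v,\ell)|$ single-vertex changes.

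The number of such boundary vertices is bounded by $d^{k}$ times the number of ways of starting the descent (at most $2$ per level, plus the $d^k$ leaves), so it is $\le N_k$, independent of $\ell$. For a single change, the derivative of the composition splits into two kinds of factors. At most $2k$ factors come from branching levels, and each has $|f_{\lambda_i}'(z)|=\lambda_i/(1+z)^2\le\lambda^\star$ on $[0,\lambda]$. At least $\ell-2k$ factors come from path steps, where $\lambda_i=\lambda$ exactly.

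\emph{Step 3: contraction of $f_{1,\lambda}$.} For the path steps, a single factor $\lambda/(1+z)^2$ need not be $<1$ when $\lambda$ is large. Since $\lambda_i=\lambda$ on the path steps, I would instead use the two-step map $f_{1,\lambda}^2(z)=\lambda(1+z)/(1+\lambda+z)$. Its derivative is $\lambda/(1+\lambda+z)^2\le \lambda/(1+\lambda)^2<1$ on $[0,\infty)$, and this is bounded by some $c_0<1$ uniformly for $\lambda\in[0,\lambda^\star]$. Alternatively, the Poincaré-metric argument of Lemma~\ref{lem:construction of w_n} gives the same contraction. Hence a run of $m$ consecutive path steps contracts by at most $C c_0^{\lfloor m/2\rfloor}$.

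The path steps may be split into at most $2k+1$ runs. An odd leftover in each run costs at most a factor $\lambda^\star$, which is absorbed into the constant. Combining, each telescoping term is at most $(\lambda^\star)^{O(k)}c^{\ell-2k}\lambda$ with $c=\sqrt{c_0}$. Summing over at most $N_k$ terms gives \eqref{eq:SSMestimateFixedk} with $M_k=N_k\lambda(\max(1,\lambda^\star))^{O(k)}c^{-2k}$.

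\emph{Main obstacle.} The main obstacle is the bookkeeping in Step 1. I need to verify, uniformly over all induced subgraphs $H$ and all roots $v$, both that the number of non-path edges on any geodesic from $v$ is at most $2k$ and that $|S_H(v,\ell)|$ is bounded independently of $\ell$. Deleting vertices only shrinks the tree and cuts paths, which helps in both respects. The key fact is that a branch of $H$ reaching distance $\ell>2k$ from $v$ must eventually run down a single attached path. Along that path no further branching occurs, so each of the at most $N_k$ boundary vertices at distance $\ell$ sits at the end of an essentially unique long path segment.
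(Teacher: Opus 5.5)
Your proposal is correct, but it argues differently from the paper.

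\textbf{The paper's route.} It splits $H$ into the core $T_H=H\cap T_{d^k}$ and the attached paths. It derives contraction estimates for the iterates $f_{1,\lambda}^{m}$ from the attracting fixed point $x_1(\lambda)$, together with a uniform entry time into a neighbourhood of it (Remark~\ref{rem:stabilityWRTparameter}). It controls the core through a \emph{finite} family of maps $F_{v,T_H,\lambda}$ with a uniform derivative bound $L$. It then runs a three-case analysis: $v\in T_H$; $v$ on a path with $T_H\subseteq B_\ell(v)$; $v$ on a path far from $T_H$.

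\textbf{Your route.} You reuse the machinery of Section~\ref{sec:proof of main}. The monotone sandwich and the telescoping \eqref{eq:eq:decompose SSM using telescoping} reduce everything to single-site changes. Each change is a M\"obius composition as in Lemma~\ref{lem:decomposing F into a composition of mobius transformations}, which you bound by the chain rule. The $O(k)$ geodesic vertices that can branch get crude factors $\le\lambda^\star$. The path vertices get an explicit two-step contraction; there indeed $\lambda_i=\lambda$, since such a vertex has no children off the geodesic.

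\textbf{What each buys.} Your argument is more elementary and uniform: no case distinction, no appeal to stability of attracting basins, and explicit constants. The costs are twofold. First, you must bound $|S_H(v,\ell)|$ independently of $\ell$. This is easy: each attached path meets the sphere in at most one vertex, the path through $v$ in at most two, plus at most $|V(T_{d^k})|$ core vertices. Second, you get the cruder rate $c=\lambda^\star/(1+\lambda^\star)$, whereas the paper's route allows any $c$ above $\sup_{\lambda}|f_{1,\lambda}'(x_1(\lambda))|=\sup_\lambda x_1(\lambda)/(1+x_1(\lambda))$. For Theorem~\ref{thm:VSSM at a distance and zeros} any fixed $c<1$ suffices.

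\textbf{Small corrections.}
\begin{itemize}
\item The derivative is $(f_{1,\lambda}^{2})'(z)=\lambda^2/(1+\lambda+z)^2$, not $\lambda/(1+\lambda+z)^2$. Your bound $\lambda/(1+\lambda)^2$ is false for $\lambda>1$. The correct bound $(\lambda/(1+\lambda))^2\le(\lambda^\star/(1+\lambda^\star))^2<1$ still gives the uniform $c_0$ you need.
\item When $v$ lies inside an attached path, the outermost factor is $f_{\lambda_n}$ with $\lambda_n=\lambda/(1+R_{v''})\neq\lambda$, where $v''$ is the neighbour of $v$ away from the core.
\item The $2k$ core edges carry up to $2k+1$ vertices, so there can be up to $2k+2$ non-path factors, not $2k$.
\item $M_k$ should be written with $\lambda^\star$ rather than $\lambda$, so that it is uniform in $\lambda$.
\end{itemize}
These only change the $O(k)$ constants.
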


Before the proof, let us show how Proposition~\ref{prop:SSMestimateFixedk} together with Proposition~\ref{prop:ZerosForAdjFamily} implies Theorem~\ref{thm:VSSM at a distance and zeros}.

\begin{proof}[Proof of Theorem~\ref{thm:VSSM at a distance and zeros}]
    Fix a function $\varphi$. 
    Let $\mathcal{G}=\{T_{d^k,1^{m_k}}\}_k\cup \{T_{d^{k-1},1^{m_k}}\}_k\cup \{T_{d^{k-2},1^{m_k}}\}_k$ for some increasing unbounded sequence $(m_k)$ to be determined below.
    
    By Proposition~\ref{prop:ZerosForAdjFamily} we know that zeros of the independence polynomial of graphs in $\mathcal{G}$ accumulate at $\lambda_{c}(\Delta)$ for any such sequence $(m_k)$.
    So it suffices to show that there exists a sequence $(m_k)$ for which $\mathcal{G}$ satisfies $\varphi$-VSSM at any $\lambda\in[0,\lambda^*]$.
    
    Let $c<c_{1}<1$ and $M_{k},k\in\mathbb{N}$ from Proposition~\ref{prop:SSMestimateFixedk}.
    Then for each $k\in\mathbb{N}$, there exists an $\ell_{k}>2k$ such
    that for all $\ell\ge\ell_{k}$, we have $M_{k}c^{\ell}<c_{1}^{\ell}$
    for all $\ell\ge\ell_{k}$. Now take $m_{k}$ large enough that
    \[
    \varphi(|V(T_{d^{k-2}1^{m_{k}}})|)\ge\ell_{k}.
    \]
    Then for every $T=(V,E)\in\{T_{d^k,1^{m_k}}\}_k$, every induced subgraph $H$ of $T$,
    we have
    \[
    |R_{(H,v),\tau_{1}}(\lambda)-R_{(H,v),\tau_{2}}(\lambda)|<c_{1}^{\ell}
    \]
    for all $v\in V(H$), all boundary conditions $\tau_{1},\tau_{2}:S_H(v,\ell)\to\{0,1\}$ with $\ell\geq \varphi(|V(T_{d^{k}1^{m_{k}}})|)$ and $\lambda\in[0,\lambda^*]$.
    In other words, the hard-core model on $\mathcal{T}=\{T_{d^{k}1^{m_{k}}}\}_{k}$  satisfies $\varphi$-VSSM at any $\lambda\in[0,\lambda^*]$.
    Note that the same is true
    for the families $\mathcal{T}'=\{T_{d^{k-1},1^{m_{k}}}\}_{k}$ and
    $\mathcal{T}''=\{T_{d^{k-2},1^{m_{k}}}\}_{k}$.
    This finishes the proof.
\end{proof}
We now prove the proposition. We note that an alternative proof can be derived from bounds on the connective constant following the arguments in \cite{SinclairSrivastavaStefankovicYin2017SpatialMixingandtheConnectiveConstantOptimalBounds}.
\begin{proof}[Proof of Proposition~\ref{prop:SSMestimateFixedk}]
    Fix $k\in\mathbb{N}$. Recall that for a boundary condition $\tau:S_T(v,\ell)\to\{0,1\}$
    on a tree $T$, we still have a recursion formula:
    \[
    R_{(T,v),\tau}=\frac{\lambda}{\prod_{j=1}^{n}(1+R_{(T_{j},v_{j}),\tau})},
    \]
    where $v_{1},\ldots,v_{n}$ are the neighbours of $v$ in $T$ and
    $T_{j}$ is the component of $T-v$ containing $v_{j}$ for $j=1,\ldots,n$.
    
    Let $v$ be a vertex in a connected induced subgraph $H$ of $T_{k}$
    and $T_{H}=H\cap T_{d^{k}}$. We ensure the estimate \eqref{eq:SSMestimateFixedk}
    by the following heuristic in each case:
    \begin{enumerate}
        \item [(1)]$v\in T_{H}$. Then for $\ell\gg2k$, $B_{\ell}(v)$ contains
        $T_{H}$ and a long part of each attached path. The difference of
        the boundary conditions is reduced by the contraction under each $f_{1,\lambda}$
        along each long path. (See Figure~\eqref{fig:TreeBoundaryLocations},
        left)
        \item [(2a)]$v\notin T_{H}$, but close to $T_{H}$. For $\ell$ large
        enough, $B_{\ell}(v)$ contains $T_{H}$ and a long part of each attached
        path. As before, the difference is reduced along the paths not containing
        $v$ until reaching $T_{H}$, where a bounded expansion may happen
        and then not expanded by the path from $T_{H}$ to $v$. (See Figure~\eqref{fig:TreeBoundaryLocations},
        right, $m_{j}$ large)
        \item [(2b)]$v\notin T_{H}$ and far away from $T_{H}$. If $T_{H}$ is
        far enough from $v$, then the path from $T_{H}$ to $v$ leads to
        enough iterations of $f_{1,\lambda}$ that any difference is reduced
        sufficiently. (See Figure~\eqref{fig:TreeBoundaryLocations}, right,
        $m_{1}$ large)
    \end{enumerate}
    \begin{figure}
        \includegraphics{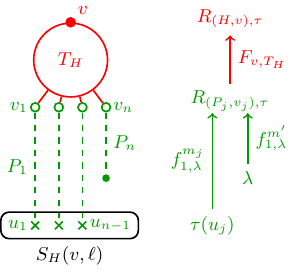}\qquad{}\includegraphics{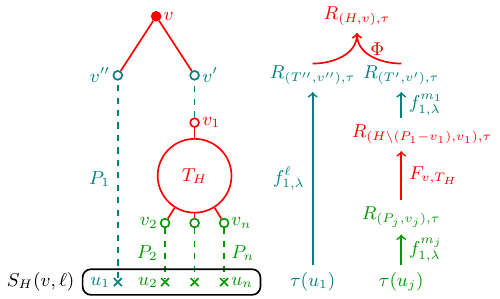}
        \caption{How ratios propagate upwards in the tree towards a root vertex $v$
            in $T_{H}$ (left) and in $P_{1}$ (right). Shown is $B_{\ell}(v)\subseteq H$. $\tau$ is a boundary condition on $S_H(v,\ell)$. In the left picture, $P_n$ examplifies a path that does not intersect the boundary $S_H(v,\ell)$, so $\tau$ has no influence along this path.}
        \label{fig:TreeBoundaryLocations}
    \end{figure}
    
    Let us quantify the maximal gain of correlation passing through $T_{H}$ for given $\lambda \in [0,\lambda^*]$.
    Let $\mathcal{L}_{T_H}=\{v_{1},\ldots v_{n}\}$ be the set of leaves
    of $T_{H}$. Let $P_{1},\ldots,P_{n}$ be the paths attached to the
    respective leaves $v_{1},\ldots v_{n}$ of $T_{H}$ to complete $H$.
    \begin{enumerate}
        \item Let $v$ in $T_{H}$. Then there is a rational map
        \[
        F_{v,T_{H},\lambda}:[0,\infty]^{\mathcal{L}_{T_H}\backslash\{v\}} \to[0,\infty],
        \]
        depending analytically on $\lambda$, that maps $(R_{(P_{j},v_{j}),\tau})_{v_{j}\neq v},$ to $R_{(H,v),\tau}$
        for any boundary condition $\tau$ on $S_H(v,\ell)$ with $\ell>2k$.
        (Note that $v$ may be equal to a leaf $v_{j}$ for some $j$.)
        \item Let $v$ in $P_{\alpha}$, $v\neq v_{\alpha}$. Then there exists
        a rational map
        \[
        F_{v,T_{H},\lambda}=F_{P_{\alpha},T_{H},\lambda}:[0,\infty]^{\mathcal{L}_{T_H}\backslash\{v_{\alpha}\}}\to[0,\infty],
        \]
        depending analytically on $\lambda$, that maps $(R_{(P_{j},v_{j}),\tau})_{v_{j}\neq v_{\alpha}}$ to
        $R_{(H\backslash(P_{\alpha}-v_{\alpha}),v_{\alpha}),\tau}$ for any
        boundary condition $\tau$ on $S_H(v,\ell)$ such that $S_H(v,\ell)$
        does not contain vertices of $T_{H}$.
    \end{enumerate}
    
    Let $c\in(0,1)$ be a constant such that $|f_{1,\lambda}'(x_{0}(\lambda))|<c$
    for all $\lambda\in[0,\lambda^{*}]$, where we recall that $x_{1}(\lambda)$
    denotes the attracting fixed point of $f_{1,\lambda}$ in $\mathbb{R}^{+}$.
    Since $[0,\lambda^{*}]$ is compact, there exist uniform neighbourhoods
    $U_{\lambda}=B_{r}(x_{1}(\lambda))$ of $x_{1}(\lambda)$ for some
    fixed $r>0$ for all $\lambda\in[0,\lambda^{*}]$, such that for all
    $x,y\in U_{\lambda}$, we have $f_{1,\lambda}(x),f_{1,\lambda}(y)\in U_{\lambda}$
    and
    \begin{equation}
        |f_{1,\lambda}(x)-f_{1,\lambda}(y)|<c|x-y|.\label{eq:contractionOnBasin}
    \end{equation}
    The compact interval $I=[0,\lambda^{*}]$ is contained in the basin
    $B_{\lambda}$ of the attracting fixed point $x_{1}(\lambda)$ of
    $f_{1,\lambda}$ for every $\lambda\in[0,\lambda^{*}]$. By Remark~\ref{rem:stabilityWRTparameter},
    there is a uniform number $m_{0}\in\mathbb{N}$ of iterations of $f_{1,\lambda}$
    that maps the interval $I$ into $U_{\lambda}$, that is $f_{1,\lambda}^{m_{0}}(I)\subseteq U$
    for all $\lambda\in[0,\lambda^{*}]$. Iterating the estimate \eqref{eq:contractionOnBasin},
    it follows:
    \[
    |f_{1,\lambda}^{m}(x)-f_{1,\lambda}^{m}(y)|<|x-y|c^{m}\begin{cases}
        \frac{\norm{f_{1,\lambda}'}_{[0,\lambda]}^{m}}{c^{m}}, & \text{ if }m\le m_{0}\\
        \frac{\norm{f_{1,\lambda}'}_{[0,\lambda]}^{m_{0}}}{c^{m_{0}}}, & \text{ if }m>m_{0}
    \end{cases}
    \]
    for all $x,y\in I$, $\lambda\in[0,\lambda^{*}]$, and $m\ge0$.
    
    Therefore, there exists a constant $M_{1}>1$ such that
    \begin{equation}
        |f_{1,\lambda}^{m}(x)-f_{1,\lambda}^{m}(y)|<M_{1}c^{m}|x-y|\label{eq:ContractionRelative}
    \end{equation}
    for all $x,y\in I$, $\lambda\in[0,\lambda^{*}]$, and $m\ge0$.
    Since $f_{1,\lambda}([0,\infty])=[0,\lambda]\subseteq I$,
    there moreover exists a constant $M_{2}>1$ such that
    \begin{equation}
        |f_{1,\lambda}^{m}(x)-f_{1,\lambda}^{m}(y)|<M_{2}c^{m}\label{eq:ContractionAbsolute}
    \end{equation}
    for all $x,y\in[0,\infty]$, $\lambda\in[0,\lambda^{*}]$ and $m\ge1$.
    
    Note that if $v_{j}\notin S_H(v,\ell)$, then $R_{(P_{j},v_{j}),\tau}$
    is in the image $f_{1,\lambda}([0,\infty])=[0,\lambda]\subseteq I$. So the input
    values of each $F_{v,T,\lambda}$ are actually from the compact $I^{\mathcal{L}_{T_{H}}}$.
    Since $F_{v,T,\lambda}(x)$ is continuously differentiable in $(x,\lambda)$
    on a neighbourhood of $I^{\mathcal{L}_{T_{H}}}\times[0,\lambda^{*}]$
    and the family $\{F_{v,T,\lambda}\}_{v,T}$ is finite (finitely many
    subtrees $T$ of $T_{d^{k}}$ with finitely many vertices $v$ of
    $T$), there is a uniform bound $L>1$ on the $\sup$-norm of the
    derivative $F_{v,T,\lambda}'$ for all $v,T$ and $\lambda\in[0,\lambda^{*}]$.
    
    Now we can combine this with the decay of correlation along long paths:
    Let $v$ be a vertex of $H$, $\ell>2k$, and $\tau_{1},\tau_{2}$
    be two boundary conditions on $S_H(v,\ell)$, and $\lambda\in[0,\lambda^*]$. We again consider the three separate cases:
    \begin{enumerate}
        \item $v\in T_{H}$. Then $T_{H}\subseteq B_{\ell}(v)$, since $\ell\ge2k$.
        For each $j$, if $S_H(v,\ell)\cap P_{j}=\emptyset$, then $R_{(P_{j},v_{j}),\tau_{1}}=R_{(P_{j},v_{j}),\tau_{2}}$.
        Otherwise, let $\{u_{j}\}=S_H(v,\ell)\cap P_{j}$. Then by~\eqref{eq:ContractionAbsolute},
        we have
        \[
        |R_{(P_{j},v_{j}),\tau_{1}}-R_{(P_{j},v_{j}),\tau_{2}}|=|f_{1,\lambda}^{m_{j}}(\tau_{1}(u_{j}))-f_{1,\lambda}^{m_{j}}(\tau_{2}(u_{j}))|<M_{2}c^{m_{j}},
        \]
        where $m_{j}=d(T_{H},u_{j})>\ell-2k$. By the derivative bound, we
        have
        \[
        |R_{(H,v),\tau_{1}}-R_{(H,v),\tau_{2}}|<LM_{2}c^{\ell-2k}.
        \]
        \item[(2a)] $v\in P_{1}$ and $T_{H}\subseteq B_{\ell}(v)$. Let $m_{1}=d(v,T_{H})$
        and $m_{2}$ the minimal number of vertices of one of the paths $P_{j},j\ge2$
        in $B_{\ell}(v)$. Again, for each $j$, if $S_H(v,\ell)\cap P_{j}=\emptyset$,
        then $R_{(P_{j},v_{j}),\tau_{1}}=R_{(P_{j},v_{j}),\tau_{2}}$. Otherwise,
        let $\{u_{j}\}=S_H(v,\ell)\cap P_{j}$. Then by \eqref{eq:ContractionAbsolute},
        we have
        \[
        |R_{(P_{j},v_{j}),\tau_{1}}-R_{(P_{j},v_{j}),\tau_{2}}|=|f_{1,\lambda}^{m_{j}}(\tau_{1}(u_{j}))-f_{1,\lambda}^{m_{j}}(\tau_{2}(u_{j}))|<M_{2}c^{m_{2}},
        \]
        where $m_{j}=d(T_{H},u_{j})>m_{2}$. By the derivative bound, we then
        have
        \[
        |R_{(H\backslash(P_{1}-v_{1}),v_{1}),\tau_{1}}-R_{(H\backslash(P_{1}-v_{1}),v_{1}),\tau_{2}}|<M_{2}Lc^{m_{2}}.
        \]
        Let $v',v''$ be the neighbours of $v$, where $v'$ is the neighbour
        of $v$ on the path from $v$ to $T_{H}$ and let $T',T''$ be the
        corresponding components of $H-v$. Then, by \eqref{eq:ContractionRelative},
        we have
        \[
        |R_{(T',v'),\tau_{1}}-R_{(T',v'),\tau_{2}}|<M_{1}c^{m_{1}}\cdot M_{2}Lc^{m_{2}}<2M_{1}M_{2}Lc^{\ell-2k}
        \]
        and by the same arguments as in the previous case:
        \[
        |R_{(T'',v''),\tau_{1}}-R_{(T'',v''),\tau_{2}}|<2M_{2}c^{\ell}<M_{1}M_{2}Lc^{\ell-2k}.
        \]
        The final ratio $R_{(H,v),\tau_{j}}$ is obtained from the two ratios
        above through the rational map $\Phi_{\lambda}(x,y)=\frac{\lambda}{(1+x)(1+y)}$,
        so
        \[
        |R_{(H,v),\tau_{1}}-R_{(H,v),\tau_{2}}|<L'M_{1}M_{2}Lc^{\ell-2k}
        \]
        where $L'>1$ is a bound on the derivative of $\Phi_{\lambda}$ on
        $I^{2}$ that is uniform in $\lambda\in[0,\lambda^{*}]$.
        \item[(2b)] $v\in P_{1}$ and $T_{H}\not\subseteq B_{\ell}(v)$. Then there is
        a path of length at least $\ell-2k$ from $v$ to $T_{H}$ and by
        \eqref{eq:ContractionAbsolute}, we have
        \[
        |R_{(T',v'),\tau_{1}}-R_{(T',v'),\tau_{2}}|<M_{2}c^{\ell-2k}.
        \]
        Now we can continue as in the previous case.
    \end{enumerate}
    In conclusion, there exists a constant $M_{k}>0$ such that for $\ell>2k$,
    we have
    \[
    |R_{(H,v),\tau_{1}}-R_{(H,v),\tau_{2}}|<M_{k}c^{\ell}
    \]
    for all $H$, $v$, $\tau_{1},\tau_{2}$, and $\lambda$ as required.
\end{proof}

\begin{rem} The results in this section can be generalized by replacing the paths (trees of down-degree $1$) by trees of arbitrary down-degree $d_2$ with $1< d_2 < \Delta-1$. In this case the same arguments imply that zeros accumulate at $\lambda_c(\Delta)$, while $\varphi$-VSSM holds for $\lambda < \lambda_c(\Delta_2)>\lambda_c(\Delta)$, where $\Delta_2 = d_2+1$. The failure of $\varphi$-VSSM for $\lambda>\lambda_c(\Delta_2)$ follows
    from the emergence of the attracting $2$-cycle of $f_{d_{2},\lambda}$ (see Section~\ref{subsec:treeDynamics}), which does not occur for $d_2 = 1$.
\end{rem}

\section{Concluding remarks}\label{sec:conclusion}
We have introduced the notion of VSSM in this paper and shown that VSSM implies absence of zeros, but as discussed below Theorem~\ref{thm:main}, absence of zeros does not necessarily imply VSSM and neither does SSM.
So the question of whether SSM implies absence of zeros is still open.
A natural approach to this question would be to see if adding conditions on the graph class besides having the SSM property helps.
As an example in this direction, let us note that as a corollary to our proof of Theorem~\ref{thm:main} we directly obtain the following result.
\begin{cor}
Let $\Delta\in \mathbb{N}, C>0, \alpha\in (0,1)$ and $\lambda^\star>0$.
There exists $g_0=g_0(C,\alpha,\Delta,\lambda^\star)$ such that for any family $\mathcal{G}$ of graphs of maximum degree at most $\Delta$ and girth at least $g_0$, if the hard-core model on $\mathcal{G}$ satisfies strong spatial mixing at $\lambda^\star$ with parameters $C,\alpha$, then there exists $\varepsilon>0$ such that for each $G=(V,E)\in \mathcal{G}$ and $\Vec\lambda\in B(\lambda^\star,\varepsilon)^V$ we have $Z_G(\Vec \lambda)\neq 0$.
\end{cor}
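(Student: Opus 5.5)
The plan is to rerun the proof of Theorem~\ref{thm:main} essentially unchanged, after observing where VSSM was actually used. In Propositions~\ref{prop:point to point contraction  wrp to sup norm} and~\ref{prop:contraction  wrp to sup norm}, VSSM enters in exactly two places, and both involve only a single fixed depth $n$. First, we need the bound $|F_{T,n}(\Vec\lambda)-F_{T,n}(\Vec 0)|\le C'\alpha^n$ for the self-avoiding walk tree $T$ rooted at $v$, which gives $\sum_u|\epsilon_u(T,n)|\le C'\alpha^n$. Second, we need the bounds $|\lambda_j-\tilde\lambda_j|\le C\alpha^j$ for $j\le n$, which come from spatial mixing on the subtrees $S_i$ at depth $j\le n$. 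Once $n$ is fixed with $CC'\alpha^n\le 1/4$, the remaining argument uses only the finiteness of the set of depth-$n$ trees of maximum degree $\Delta$.

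The key point is that if $G$ has girth at least $g_0:=2n+2$, then the ball of radius $n+1$ about $v$ in any induced subgraph $H$ of $G$ is a tree. This ball coincides with the first $n+1$ levels of $T_{\mathrm{SAW}(H,v)}$. The same holds for every vertex $u$ within distance $n$ of $v$, since its ball of radius $n+1-\mathrm{dist}(u,v)$ also lies inside a tree-like region. Consequently, each quantity of the form $R_{(T,\sigma),w}$ used above, with a boundary condition at depth at most $n+1$ in the self-avoiding walk tree, is identical to the corresponding conditional ratio $R_{(H',\sigma),w}$ in an induced subgraph $H'$ of $G$. Here $H'$ is the subgraph obtained by deleting the path vertices, as in Lemma~\ref{lem:ratio TSAW is ratio G}, and the boundary condition is placed on $S_{H'}(w,\cdot)$. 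SSM for $\mathcal{G}$, with parameters $C,\alpha$, therefore supplies exactly the estimates that VSSM supplied. This requires checking that the ball in $H'$ and the subtree of the self-avoiding walk tree agree, which girth guarantees.

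The constant $n$ depends only on $C,\alpha,\Delta,\lambda^\star$: it is determined by the constants in Proposition~\ref{prop:point to point contraction  wrp to sup norm}, which in turn come from Corollaries~\ref{cor:derivative bound on ratio of G_n}, \ref{cor:compare Derivative} and~\ref{cor:derivative bound F}. Hence $g_0=2n+2$ depends only on these parameters as well. With $n$ fixed, I would repeat the inductive proof of Theorem~\ref{thm:main} verbatim: show (i) and (ii) by induction on $|V|$, applying the contraction estimate of Proposition~\ref{prop:contraction  wrp to sup norm} to $F_{T,n,\Vec\lambda}$.

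The main obstacle is a circularity. The constant $C$ in Proposition~\ref{prop:point to point contraction  wrp to sup norm} must be shown to depend only on $\lambda,\Delta,C',\alpha$, so that $n$ can be chosen before $g_0$. This in turn requires the bounds $|\lambda_j-\tilde\lambda_j|$ to hold for all $j$, including $j>n$. For those indices I would set $\lambda_j=\tilde\lambda_j=\lambda$, exactly as the paper does, so that only $j\le n$ matters. I would verify that Corollary~\ref{cor:derivative bound on ratio of G_n} then applies with constants independent of $n$, which is the step needing the most care.
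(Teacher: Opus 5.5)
Your proposal is correct and is essentially the argument the paper intends: the paper only remarks that the corollary follows directly from the proof of Theorem~\ref{thm:main}, because VSSM is used only at depths at most $n+1$ for a fixed $n$ depending on $C,\alpha,\Delta,\lambda^\star$ (the sequences are padded with $\lambda$ beyond $n$, so no circularity arises), and large girth makes that part of the self-avoiding walk tree coincide with the corresponding graph ball, so SSM supplies the same estimates. One small slip: girth at least $2n+2$ only makes the ball of radius $n$ induce a tree, since a cycle of length $2n+2$ or $2n+3$ through $v$ lies entirely in the ball of radius $n+1$; taking $g_0=2n+4$ fixes this, which is harmless because only the existence of $g_0$ is claimed.
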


Finally, let us remark that our methods for proving Theorem~\ref{thm:main} are fairly general and we believe they should extend to other $2$-state model such as the Ising model, where of course the appropriate modifications should be made taking boundary conditions into account.
For models with more than $2$ states, such as the Potts model, it is less clear how to even define VSSM, since there does not appear to be a tree-like object that captures the ratios as in the case of $2$-state systems~\cite{liu2024counterexamples}. 

\bibliographystyle{alpha}
\bibliography{graphpolynomials}



\end{document}